\DeclarePairedDelimiter\ceil{\lceil}{\rceil}
\newcommand{\R}{\mathbb{R}}
\newcommand{\N}{\mathbb{N}}
\newcommand{\x}{{\bf x}}
\newcommand{\y}{{\bf y}}
\newcommand{\p}{{\bf p}}
\newcommand{\Div}{{\rm div}}
\newcommand{\ii}{{\rm i}}
\newcommand{\ik}{\ii k}
\newtheorem{Theorem}{Theorem}[section]
\newtheorem{Lemma}{Lemma}[section]
\newtheorem{Proposition}{Proposition}[section]
\newtheorem{remark}{Remark}[section]
\newtheorem*{Assumption*}{Assumption}
\newtheorem{problem}{Problem}[section]
\newtheorem*{problem*}{Problem}
\numberwithin{equation}{section}
\begin{document}

\title{A Carleman-based numerical method for quasilinear elliptic equations with over-determined boundary data and applications}

\author{Thuy T. Le\thanks{Department of Mathematics and Statistics, University of North Carolina at
Charlotte, Charlotte, NC, 28223, USA, \texttt{tle55@uncc.edu}.} \and Loc H. Nguyen\thanks{Department of Mathematics and Statistics, University of North Carolina at
Charlotte, Charlotte, NC, 28223, USA, \texttt{loc.nguyen@uncc.edu}.} \and Hung V. Tran\thanks{Department of Mathematics, University of Wisconsin Madison, Madison, WI, 53706, USA, \texttt{hung@math.wisc.edu} (corresponding
author).}} 


\date{}
\maketitle
\begin{abstract}
	We propose a new iterative scheme to compute the numerical solution to an over-determined boundary value problem for a general quasilinear elliptic PDE.
	The main idea is to  repeatedly solve its linearization by using the quasi-reversibility method with  a suitable Carleman weight function.
	The presence of the Carleman weight function allows us to employ  a Carleman estimate  to prove the convergence of the sequence generated by the iterative scheme above to the desired solution.
	The convergence of the iteration is fast at an exponential  rate without the need of an initial good guess.
	We apply this method to compute solutions to some general quasilinear elliptic equations and a large class of first-order Hamilton-Jacobi equations.
	Numerical results are presented.
\end{abstract}

\noindent{\it Key words: numerical methods; Carleman estimate; linearization;
 boundary value problems; quasilinear elliptic equations; Hamilton-Jacobi equations; viscosity solutions; vanishing viscosity process.}

\noindent{\it AMS subject classification:
35D40, 
35F30, 
35J62, 
35N25, 
65N12, 
78A46. 
}

\section{Introduction} \label{sec intr}

The main aim of this paper is to develop a numerical method based on Carleman estimates to solve quasilinear elliptic PDEs with over-determined boundary data.
We consider this new method as the second generation of Carleman-based numerical methods while the first generation is called the convexification, which will be mentioned in detail later.
Let $\Omega$ be an open and bounded domain in $\R^d$, $d \geq 2$, with smooth boundary $\partial \Omega$. 
Let $f$ and $g$ be two smooth functions defined on $\partial \Omega$. 
Let $F: \overline \Omega \times \R \times \R^d \to \R$ be a function in the class $C^2.$
Let $A=(a_{ij})_{i,j=1}^d: \overline \Omega \to \R^{d\times d}$ be $C^2$, symmetric, and positive definite, that is,
\[
\gamma |\xi|^2 \leq a_{ij}(\x) \xi_i \xi_j \leq \gamma^{-1} |\xi|^2 \quad \mbox{ for all } \x \in \overline\Omega, \ \xi \in \R^d,
\]
for some fixed $\gamma \in (0,1)$.
The following problem is of our interests. 

\begin{problem}
Assume that the over-determined boundary value problem
\begin{equation}
\left\{
	\begin{array}{ll}
		 -\Div (A(\x)\nabla u(\x))  + F(\x, u(\x), \nabla u(\x)) = 0 &\x \in \Omega,\\
		u(\x) = f(\x) &\x \in \partial\Omega,\\
		\partial_\nu u(\x) = g(\x) &\x \in \partial \Omega
	\end{array}
\right.
\label{main eqn}
\end{equation}
has a solution $u^*$ in $C^2(\overline \Omega)$. 
Compute the function $u^*$.
\label{p1}
\end{problem}

Problem \ref{p1} is motivated by a class of nonlinear inverse problems in PDEs, in which $f$ and $g$ are the data that can be measured.
One important goal of   inverse problems is to reconstruct the internal structure of a domain from boundary measurements,
which allow us to impose both Dirichlet and Neumann data of the unknown in \eqref{main eqn}.
Recently,  a unified framework to solve such inverse problems was developed by the research group of the first and second authors, which has two main steps.
In the first step, by introducing a change of variables, one derives a PDE of the form  \eqref{main eqn} from the given inverse problem, in which $f$ and $g$ can be computed directly  by the given boundary data.
In the second step, one numerically solves \eqref{main eqn} to find $u^*$. 
The knowledge of $u^*
$ directly yields that of the solution to the corresponding inverse problem under consideration.
See \cite{KhoaKlibanovLoc:SIAMImaging2020, LeNguyen:jiip2022} and the references therein for some works in this framework.
Moreover, this unified framework was successfully tested with experimental data in \cite{VoKlibanovNguyen:IP2020, Khoaelal:IPSE2021, KlibanovLeNguyenIPI2021}.
Another motivation to study Problem \ref{p1} is  to seek solutions to  Hamilton-Jacobi equations under the circumstance that the Neumann data of the unknown  can be computed by its Dirichlet data and the given form of the Hamiltonian, see e.g., \cite[Assumption 1.1 and Remark 1.1]{KlibanovNguyenTran:JCP2022}.
Since inverse problems are out of the scope of this paper, we only focus on the applications in solving quasilinear elliptic PDEs and first-order Hamilton-Jacobi equations.

A natural approach to solve \eqref{main eqn} is based on optimization. 
That means one sets the computed solution to \eqref{main eqn} as a minimizer of a mismatch functional, e.g.,
\begin{equation*}
	v \mapsto J(v) := \int_{\Omega} \big| -\Div(A(\x)\nabla v(\x)) + F(\x, v(\x), \nabla v(\x))\big|^2\, d\x 
	+ \mbox{a regularization term}
\end{equation*}
subject to the Cauchy boundary conditions $v|_{\partial \Omega} = f$ and $\partial_{\nu} v|_{\partial \Omega} = g.$
The methods based on optimization are widely used in the scientific community, especially in computational mathematics, physics and engineering.
Although effective and popular, the optimization-based approaches have some drawbacks:
\begin{enumerate}
	\item \label{drb1} In general, it is not clear that the obtained minimizer approximates the true solution to \eqref{main eqn}.
	\item \label{drb2} The mismatch functional $J$ is not convex, and it might have multiple minima and ravines (see an example in \cite{ScalesSmithFischerLjcp1992} for illustration). 
	To deliver reliable numerical solutions, one must know some good initial guesses of the true solutions.
	\item \label{drb3} The computation is expensive and time consuming.  
\end{enumerate}
Drawbacks \# \ref{drb1} and \#\ref{drb2}  can be treated by the convexification method, which is designed to globalize the optimization methods. 
The main idea of the convexification method is to employ some suitable Carleman weight functions to convexify  the mismatch functionals.
 The convexity of weighted mismatch functionals is rigorously proved by  Carleman estimates.
Several versions of the convexification method  have been developed in  \cite{KlibanovNik:ra2017, KhoaKlibanovLoc:SIAMImaging2020, Klibanov:sjma1997, Klibanov:nw1997, Klibanov:ip2015, KlibanovLeNguyenIPI2021} since it was first introduced in \cite{KlibanovIoussoupova:SMA1995}. 
Moreover, we recently discovered that the convexification method can be used to solve a large class of first-order Hamilton-Jacobi equations \cite{KlibanovNguyenTran:JCP2022}.

In this paper, we introduce a new method to solve \eqref{main eqn} based on linearization and Carleman estimates.
Like the convexification method, our method delivers a reliable solution to \eqref{main eqn} without requiring a good initial guess. This fact is rigorously proved.
Unlike the convexification method which is time consuming, our new method quickly provides the desired solutions. 
Its converge rate is $O(\theta^n)$ as $n \to \infty$ for some $\theta \in (0, 1).$

We find the numerical solution to \eqref{main eqn} by repeatedly solving the linearization of \eqref{main eqn} by  a new ``Carleman weighted" quasi-reversibility method.
The classical quasi-reversibility method was first proposed in \cite{LattesLions:e1969}, and it has been studied intensively since then (see \cite{Klibanov:anm2015} for a survey).
By a Carleman weighted quasi-reversibility method, we mean that we let  a suitable Carleman weight function involve in the cost functional suggested by the classical quasi-reversibility method.
The presence of the Carleman weight function is the key for us to prove our convergence theorem.
Our process to solve Problem \ref{p1} is as follows.
We first choose any initial solution that might be far away from the true one. 
Denote this initial solution by the function $u_0$.
Linearizing \eqref{main eqn} about $u_0$, we obtain a linear PDE. 
We then solve this linear PDE by the Carleman weighted quasi-reversibility method to obtain an updated solution $u_1$.
Using the Carleman weighted quasi-reversibility method rather than the classical one in this step is the key to our success.
By iteration, we repeat this step to construct a sequence $\{u_n\}_{n \geq 0}$.
The convergence of this sequence to the true solution to \eqref{main eqn} is proved by using Carleman estimates.
We then apply this method to numerically solve some quasilinear elliptic equations.
It is important to note that our approach works well for systems of quasilinear elliptic PDEs too.

\begin{remark}
In general, \eqref{main eqn} is over-determined and it might have no solution, especially when the boundary data contains some noise.
Our iteration and linearization method using the Carleman weighted quasi-reversibility method in each step still delivers a function that ``most fits" \eqref{main eqn}. 

On the other hand, \eqref{main eqn} with only Dirichlet boundary condition, that is, the equation
\begin{equation}\label{eq:eigen}
\left\{
	\begin{array}{ll}
		 -\Div (A(\x)\nabla u(\x))  + F(\x, u(\x), \nabla u(\x)) = 0 &\x \in \Omega,\\
		u(\x) = f(\x) &\x \in \partial\Omega
	\end{array}
\right.
\end{equation}
might have many solutions since we do not impose structural conditions on $F$.
For example, in case $F(\x,u(\x),\nabla u(\x))= \lambda u$ for some $\lambda \in \R$ and $f=0$, \eqref{eq:eigen} becomes an eigenvalue problem with possibly many solutions as eigenfunctions.
In such cases, requiring the additional Neumann boundary condition is then natural, and \eqref{main eqn} is not over-determined.
\end{remark}

Next, we use our  method to solve some first-order Hamilton-Jacobi equations.
More precisely, to find viscosity solutions to the first-order equation 
\[
F(\x, u(\x), \nabla u(\x)) = 0 \quad \x \in \Omega,
\]
we use the vanishing viscosity procedure and  consider, for $0 < \epsilon_0 \ll1$,
\[
	 -\epsilon_0 \Delta u(\x)  + F(\x, u(\x), \nabla u(\x)) = 0 \quad \x \in \Omega,\\
\]
with given Cauchy boundary data.
Our new method is robust in the sense that it works for general nonlinearity $F(\x, u, \nabla u)$ that might not be convex in $\nabla u.$
We  refer the readers to \cite{CrandallLions83, CrandallEvansLions84, Tran19} and the references therein for the theory of viscosity solutions.
A weakness of our new approach in computing the viscosity solutions to Hamilton-Jacobi equations is that we need to require both Dirichlet and Neumann data of the unknown $u$. 
See \cite[Remark 1.1]{KlibanovNguyenTran:JCP2022} for some circumstances that this requirement is fulfilled. 
There have been many important methods to solve Hamilton-Jacobi equations in the literature.
For finite difference monotone and consistent schemes of first-order equations and applications, see \cite{BS-num, CL-rate,  OsFe, Sethian,  Sou1} for details and recent developments.
If $F=F(\x,u,\nabla u)$ is convex in $\nabla u$ and satisfies appropriate conditions, it is possible to construct some semi-Lagrangian approximations by the discretization of the Dynamical Programming Principle associated to the problem, see \cite{FaFe1, FaFe2} and the references therein.

The paper is organized as follows.
In Section \ref{sec 2222}, we recall a Carleman estimate and two examples about inverse problems in which Problem \ref{p1} appears.
In Section \ref{sec 2},  we introduce the new iterative method based on linearization and Carleman estimates.
In Section \ref{sec conv}, we prove the convergence of our method.
Some numerical results for quasilinear elliptic equations and first-order Hamilton-Jacobi equations are presented in Section \ref{sec num}.
Concluding remarks are given in Section \ref{sec 5}.

\section{Preliminaries} \label{sec 2222}

In this section, we recall a  Carleman estimate,  which plays a key role for the proof of the convergence theorem in this paper. 
We then  present an  inverse scattering problem in which Problem \ref{main eqn} appears.
\subsection{A Carleman estimate}

In this section, we present a simple form of Carleman estimates. 
Carleman estimates were first employed to prove the unique continuation principle, see e.g., \cite{Carleman:1933, Protter:1960AMS},
 and they quickly became a powerful tool in many areas of PDEs afterwards.
Let $\x_0$ be a point in $\R^d \setminus \overline \Omega$ such that $r(\x) = |\x - \x_0| > 1$ for all $\x \in \Omega.$
For each $\beta > 0$, define 
\begin{equation}
	\mu_\beta(\x) = r^{-\beta}(\x)= |\x - \x_0|^{-\beta}  \quad \mbox{for all } \x \in \overline \Omega.
	\label{mu}
\end{equation}

We have the following lemma.

\begin{Lemma}[Carleman estimate]\label{lem:Carleman}
There exist  positive constants $\beta_0,\lambda_0$ depending only on $\x_0$, $\Omega$, $\gamma$,  and $d$ such that for all function $v \in C^2(\overline \Omega)$ satisfying
	\begin{equation}
		v(\x) = \partial_{\nu} v(\x) = 0 \quad \mbox{for all } 
		\x \in \partial \Omega,
		\label{3.1}
	\end{equation}
	the following estimate holds true
	\begin{equation}
		\int_{\Omega} e^{2\lambda \mu_\beta(\x)}|\Div(A \nabla v) |^2\,d\x
		\geq
		 C\lambda  \int_{\Omega}  e^{2\lambda \mu_\beta(\x)}|\nabla v(\x)|^2\,d\x
		+ C\lambda^3  \int_{\Omega}   e^{2\lambda \mu_\beta(\x)}|v(\x)|^2\,d\x	
		\label{Car est}	
	\end{equation}
	for all $\beta \geq \beta_0$ and $\lambda \geq \lambda_0$. 
	Here, $C = C( \x_0,  \Omega, \gamma, d, \beta) > 0$ depends only on the listed parameters.
	\label{lemma carl}
\end{Lemma}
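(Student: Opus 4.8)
The plan is to prove the Carleman estimate \eqref{Car est} by the standard ``conjugation'' technique: introduce the substitution $w = e^{\lambda \mu_\beta} v$, rewrite the weighted integral of $|\Div(A\nabla v)|^2$ as an integral involving a conjugated operator acting on $w$, expand the resulting square, and show that the cross terms produce, after integration by parts, the positive lower-order terms $C\lambda \int |\nabla v|^2$ and $C\lambda^3 \int |v|^2$ modulo boundary contributions that vanish thanks to the Cauchy condition \eqref{3.1}. Concretely, I would first record the derivatives of the weight: with $r = |\x - \x_0|$ and $\mu_\beta = r^{-\beta}$ one has $\nabla \mu_\beta = -\beta r^{-\beta - 2}(\x - \x_0)$ and $|\nabla \mu_\beta|^2 = \beta^2 r^{-2\beta - 2}$, so that $|\nabla \mu_\beta|$ is comparable to $\mu_\beta$ up to the factor $\beta r^{-1}$, which is bounded since $r > 1$ on $\Omega$. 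This uniform ellipticity-type comparison between $|\nabla \mu_\beta|$ and $\mu_\beta$, together with a pointwise bound from below on a quadratic form built from $D^2\mu_\beta$ and $A$, is exactly the ``pseudo-convexity'' property that makes the weight work; it is here that the hypothesis $r(\x) > 1$ and the choice of large $\beta$ enter.

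Next I would carry out the conjugation. Writing $v = e^{-\lambda \mu_\beta} w$ and applying $\Div(A\nabla \cdot)$ gives
\[
e^{\lambda \mu_\beta}\Div(A\nabla v) = \Div(A\nabla w) - 2\lambda A\nabla \mu_\beta \cdot \nabla w + \big(\lambda^2 A\nabla\mu_\beta\cdot\nabla\mu_\beta - \lambda \Div(A\nabla\mu_\beta)\big) w =: P_1 w + P_2 w,
\]
where I split the right-hand side into a ``symmetric'' part $P_1 w = \Div(A\nabla w) + \lambda^2 (A\nabla\mu_\beta\cdot\nabla\mu_\beta) w$ and an ``antisymmetric'' part $P_2 w = -2\lambda A\nabla\mu_\beta\cdot\nabla w - \lambda\Div(A\nabla\mu_\beta) w$ (the precise split of the zeroth-order term between $P_1$ and $P_2$ can be tuned later). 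Then $\int_\Omega e^{2\lambda\mu_\beta}|\Div(A\nabla v)|^2 = \int_\Omega (P_1 w + P_2 w)^2 \geq 2\int_\Omega (P_1 w)(P_2 w)$, and the crux is to show that $2\int_\Omega (P_1 w)(P_2 w) \geq C\lambda \int_\Omega |\nabla w|^2 + C\lambda^3 \int_\Omega |w|^2$. Expanding this cross term produces four integrals; each is handled by integration by parts, during which every boundary term carries a factor of $w$ or $\partial_\nu w$ on $\partial\Omega$, hence vanishes because \eqref{3.1} forces $w = \partial_\nu w = 0$ there. The surviving bulk terms, after using the derivative formulas for $\mu_\beta$ and the ellipticity of $A$, combine into a quadratic form in $(\nabla w, w)$ whose dominant contributions are $\lambda\int (\text{something} \gtrsim |\nabla w|^2)$ and $\lambda^3 \int(\text{something}\gtrsim |w|^2)$; choosing $\beta \geq \beta_0$ large makes the favorable terms beat the error terms, and then choosing $\lambda \geq \lambda_0$ large absorbs remaining lower-order errors. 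Finally I would convert the estimate in $w$ back to one in $v$: since $|\nabla w|^2 = e^{2\lambda\mu_\beta}|\nabla v + \lambda v\nabla\mu_\beta|^2$, one has $e^{2\lambda\mu_\beta}|\nabla v|^2 \leq 2|\nabla w|^2 + 2\lambda^2|\nabla\mu_\beta|^2 e^{2\lambda\mu_\beta}|v|^2$, so the $|\nabla w|^2$ term controls $e^{2\lambda\mu_\beta}|\nabla v|^2$ up to a $\lambda^2 e^{2\lambda\mu_\beta}|v|^2$ term, which is in turn absorbed by the $\lambda^3 e^{2\lambda\mu_\beta}|v|^2$ term for $\lambda$ large; this yields \eqref{Car est}.

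The main obstacle is the bookkeeping in the cross-term expansion: one must verify that the coefficient of $\int |\nabla w|^2$ coming out of the integration by parts is genuinely positive and of size $\sim \lambda$, and similarly that the coefficient of $\int |w|^2$ is positive and of size $\sim \lambda^3$, uniformly over $\x \in \Omega$. This is where the specific radial weight $\mu_\beta = r^{-\beta}$ is essential: one computes that the relevant Hessian-type quantity $D^2\mu_\beta$ has the right sign structure (the function $r \mapsto r^{-\beta}$ is strictly convex and its convexity is amplified by $\beta$), and that the lower-order corrections involving $\Div(A\nabla\mu_\beta)$ and derivatives of $A$ are dominated once $\beta$ and then $\lambda$ are taken large enough. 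A secondary technical point is to make sure all constants depend only on $\x_0,\Omega,\gamma,d$ (and $\beta$ for $C$), which follows because $A \in C^2$, $\overline\Omega$ is compact, and $r$ is bounded above and below on $\overline\Omega$. I would present the derivative formulas and the pseudo-convexity inequality as a preliminary computation, then do the integration by parts in one block, collect terms, and close with the $w \to v$ conversion.
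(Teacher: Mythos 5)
Your route is genuinely different from the paper's: the authors do not prove the estimate from scratch, but extend $v$ by zero, change variables, and invoke a known Carleman estimate on an annulus $B(\x_0,R_3)\setminus\overline{B(\x_0,R_1)}$ (Lemma 5 of the cited Minh--Loc paper), from which \eqref{Car est} follows immediately. A self-contained conjugation proof is legitimate in principle, but as written your sketch has a genuine gap exactly at the step you yourself identify as the main obstacle.

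The gap is the claimed positivity of the gradient term in the cross-term expansion. You assert that $D^2\mu_\beta$ has ``the right sign structure'' because $r\mapsto r^{-\beta}$ is strictly convex, but the spatial Hessian of $\mu_\beta(\x)=|\x-\x_0|^{-\beta}$ is indefinite: $D^2\mu_\beta=\beta(\beta+2)r^{-\beta-4}(\x-\x_0)\otimes(\x-\x_0)-\beta r^{-\beta-2}\,\mathrm{Id}$, so in every direction orthogonal to $\x-\x_0$ its value is $-\beta r^{-\beta-2}<0$, and enlarging $\beta$ only makes this worse. With the splitting you propose (take $A=\mathrm{Id}$ for clarity), the gradient contribution of $2\int(P_1w)(P_2w)$ after the boundary terms vanish is $4\lambda\int D^2\mu_\beta(\nabla w,\nabla w)\,d\x$, which is negative of order $\lambda\beta$ for tangential gradients; re-tuning the split of the zeroth-order term between $P_1$ and $P_2$ does not change this. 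So the coefficient of $\int|\nabla w|^2$ coming out of the cross term is \emph{not} positive pointwise, and your argument does not close as stated. What does survive is the zeroth-order part: the cross term gives $2\lambda^3\int\nabla\mu_\beta\cdot\nabla(|\nabla\mu_\beta|^2)\,w^2\sim\lambda^3\beta^4\int r^{-3\beta-4}w^2>0$. The standard repairs are: (i) keep $\|P_1w\|_{L^2}^2$ in the expansion, obtain first the $\lambda^3\int e^{2\lambda\mu_\beta}|v|^2$ bound, and then recover $\lambda\int e^{2\lambda\mu_\beta}|\nabla v|^2$ a posteriori from the identity $\int e^{2\lambda\mu_\beta}A\nabla v\cdot\nabla v\,d\x=-\int e^{2\lambda\mu_\beta}v\,\Div(A\nabla v)\,d\x-2\lambda\int e^{2\lambda\mu_\beta}v\,A\nabla v\cdot\nabla\mu_\beta\,d\x$ together with Cauchy--Schwarz and absorption into the $\lambda^3$ term (this is why the gradient term appears with only one power of $\lambda$); or (ii) insert an auxiliary weight $r^{\beta+2}$ in front of $|\Div(A\nabla v)|^2$ before expanding, as in the annulus estimate the paper quotes, which changes the bookkeeping so that the tangentially negative Hessian contribution is dominated. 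Without one of these mechanisms the key inequality $2\int(P_1w)(P_2w)\geq C\lambda\int|\nabla w|^2+C\lambda^3\int|w|^2$ is false for this weight. The remaining ingredients of your plan (vanishing of boundary terms since $w=\partial_\nu w=0$ on $\partial\Omega$, the conversion from $w$ back to $v$ with absorption of the $\lambda^2|v|^2$ error, and the uniformity of constants on the compact set $\overline\Omega$ with $r>1$) are fine.
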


\begin{proof}
Lemma \ref{lemma carl} is a direct consequence of \cite[ Lemma 5]{MinhLoc:tams2015}.
Let $0< R_1 < 1$ and $R_3 \gg 1$ such that $\Omega \Subset B(\x_0, R_3) \setminus \overline{B(\x_0, R_1)}$.
Here, $B(\x_0, s) = \{\y \in \R^d: |\y - \x_0| < s\}$ for $s > 0.$
Extend $v$ to the whole $\R^d$ such that $v(\x) = 0$ for all $\x \in \R^d \setminus \Omega$. 
Using a change of variable $\x \mapsto \x - \x_0$ and \cite[ Lemma 5]{MinhLoc:tams2015}, there exists a number $\beta_0 \geq 1$ depending on $\gamma$, $R_1$ and $R_3$ such that for all $\beta \geq \beta_0$ and $|\lambda| \geq 2R_3^{-\beta}$, we have
\begin{multline}
	\int_{B(\x_0, R_3) \setminus \overline{B(\x_0, R_1)}} r(\x)^{\beta + 2} e^{2\lambda r(\x)^{-\beta}} 
	|\Div(A\nabla v)|^2 \,d\x
	\\
	\geq 
	C \int_{B(\x_0, R_3) \setminus \overline{B(\x_0, R_1)}}  e^{2\lambda r^{-\beta}(\x)} |\lambda| \beta\big(\beta^3 \lambda^2 r^{-2\beta - 2}(\x) |v|^2 + |\nabla v|^2\big)\,d\x
	\\
	- C\int_{\partial (B(\x_0, R_3) \setminus \overline{B(\x_0, R_1)})} |\lambda| \beta e^{2\lambda r^{-\beta}(\x)} (r(\x) |\nabla v(\x)|^2 
	\\
	+ \beta^2 \lambda^2 r^{-2\beta - 1}(\x)|v(\x)|^2)\, d\sigma(\x)
	\label{Carl Minh}
\end{multline}
for some constant $C$ depending only on $d$ and $\gamma$.
Since $v = 0$ on $\big(B(\x_0, R_3) \setminus \overline{B(\x_0, R_1)}\big) \setminus \Omega$ and since $\Omega \Subset B(\x_0, R_3) \setminus \overline{B(\x_0, R_1)}$, 
allowing $C$ to depend on $\beta$ and $\Omega$, we deduce the Carleman estimate \eqref{Car est} from \eqref{Carl Minh}.
\end{proof}

An alternative way to obtain \eqref{Car est} is to apply the Carleman estimate in  \cite[Chapter 4, \S 1, Lemma 3]{Lavrentiev:AMS1986} for general parabolic operators. 
The arguments to obtain \eqref{Car est} using \cite[Chapter 4, \S 1, Lemma 3]{Lavrentiev:AMS1986} are similar to that in \cite[Section 3]{LeNguyenNguyenPowell:JOSC2021} with the Laplacian replaced by the operator $\Div (A\nabla \cdot)$. 

\begin{remark}
	We specially draw the reader's attention to different forms of Carleman estimates for all three main kinds of differential operators (elliptic, parabolic and hyperbolic) and their applications in inverse problems and computational mathematics \cite{BeilinaKlibanovBook, BukhgeimKlibanov:smd1981, KlibanovLiBook}. 
	It is worth mentioning that some Carleman estimates hold true for all functions $v$ satisfying $v|_{\partial \Omega} = 0$ and $\partial_{\nu} v|_{\Gamma} = 0$ where $\Gamma$ is a part of $\partial \Omega$, see e.g., \cite{KlibanovNguyenTran:JCP2022, NguyenLiKlibanov:2019}, which can be used to solve quasilinear elliptic PDEs with the boundary data partly given.
\end{remark}
	
\subsection{An  inverse scattering problem} \label{inverse1}

As mentioned in Section \ref{sec intr}, Problem \ref{p1} arises from nonlinear inverse problems. 
We present here an important example in the context of inverse scattering problems in the frequency domain.
Let $c: \R^d \to [1, \infty)$ be the spatially distributed dielectric constant of the medium and $[\underline k, \overline k]$ be an interval of wavenumbers with $\underline k >0$. 
Since the dielectric constant of the air is 1, we set $c(\x) = 1$ for all $\x \in \R^d \setminus \Omega.$
For each $k \in [\underline k, \overline k],$ let $w(\x, k)$, $\x \in \R^d$, be the wave field generated by a point source at $\x_0 \in \R^d \setminus \Omega$ with wavenumber $k$.
The function $w$ is governed by the Helmholtz equation and the Sommerfeld radiation condition
\begin{equation}
	\left\{
		\begin{array}{ll}
			\Delta w(\x, k) + k^2 c(\x)w(\x, k) = -\delta(\x - \x_0)  &\x \in \R^d,\\
			(\frac{\partial}{\partial {|\x|}}  - \ik) w(\x, k) = o(|\x|^{(1 - d)/2}) &|\x| \to \infty.
		\end{array}
	\right.
\end{equation}

The inverse scattering problem is formulated as follows.
\begin{problem}[Inverse Scattering Problem]
Compute the function $c(\x)$, $\x \in \Omega$, from the measurements of 
\begin{equation}
	f_1(\x, k) = w(\x, k) \quad
	\mbox{and}
	\quad
	f_2(\x, k) = \partial_{\nu}w(\x, k) 
\end{equation}
for all $\x \in \partial \Omega$, $k \in [\underline k, \overline k].$
\end{problem}
The knowledge of the function $c$ partly provides the internal structure of the domain $\Omega$.
In other words, solving the inverse scattering problem allows us to examine a domain from external measurements, which has applications in security, sonar imaging,  geographical exploration, medical imaging, near-field optical microscopy, nano-optics, see, e.g., \cite{ColtonKress:2013} and references therein for more details.
%
%
%
There have been many important methods to solve inverse scattering problems in the literature. 
Each method has its own advantages and disadvantages. 
A common drawback of the widely-used method based on optimization to solve  inverse scattering problems is the need of a good initial guess of the true solution $c$. 
We recall from \cite{LeNguyen:JSC2022} a method to solve the above inverse scattering problem  in which such a need is relaxed.
Denote by
\[
z(\x,k)=\frac{1}{k^2}\log\Big(\frac{w(\x, k)}{w_0(\x, k)}\Big) \quad \text{ for all } \x \in \Omega, k \in  [\underline k, \overline k],
\]
where $w_0(\x, k) = \frac{e^{\ik |\x - \x_0|}}{4\pi |\x - \x_0|}.$
Then, $z$ satisfies
\[
\Delta z(\x,k) + k^2 |\nabla z(\x,k)|^2 + \frac{2 \nabla z(\x,k) \cdot \nabla w_0(\x,k)}{w_0(\x,k)} = - c(\x)+1
\]
for all $\x \in \Omega, k \in  [\underline k, \overline k].$
Let $\{\Psi_n\}_{n \geq 1}$ be the orthonormal basis of $L^2(\underline k, \overline k)$ introduced in \cite{Klibanov:jiip2017} 
and define
\begin{equation}
	z_n(\x) = \int_{\underline k}^{\overline k} z(\x,k) \Psi_n(k) \,dk
	\quad \text{ for } n \geq 1, \x \in \Omega.
	\label{zn}
\end{equation}
 We approximate
\[
v(\x,k) =\sum_{i=1}^\infty z_i(\x) \Psi_i(k) \approx \sum_{i=1}^N z_i(\x) \Psi_i(k),
\]
for a suitable cut-off number $N\in \N$.
Then,
the vector $Z_N = (z_1, z_2, \dots, z_N)$ ``approximately" satisfies the system
\begin{equation}
\sum_{i = 1}^N s_{li} \Delta z_i(\mathbf{x}) + \sum_{i, j = 1}^N
a_{lij}\nabla z_i(\mathbf{x}) \cdot \nabla z_j(\mathbf{x}) + \sum_{i = 1}^N
B_{li}(\mathbf{x}) \cdot \nabla z_i(\mathbf{x}) = 0  \label{6.11}
\end{equation}
where 
\begin{equation*}
\left\{  
\begin{array}{l}
s_{li} = \displaystyle\int_{\underline k}^{\overline k} \Psi_i^{\prime }(k)
\Psi_l(k) \,dk, \\ 
a_{lij} = \displaystyle 2 \int_{\underline k}^{\overline k} \Big(k^2
\Psi_i(k)\Psi_j^{\prime }(k) + k \Psi_i(k)\Psi_j(k)\Big)\Psi_l(k)\,dk, \\ 
B_{li}(\mathbf{x}) = \displaystyle 2 \int_{\underline k}^{\overline k}\Big( %
\Psi_i^{\prime }(k)\frac{\nabla w_0(\mathbf{x}, k)}{w_0(\mathbf{x}, k)} +
\Psi_i(k) \partial_k \frac{\nabla w_0(\mathbf{x}, k)}{w_0(\mathbf{x}, k)} %
\Big)\Psi_l(k)\,dk%
\end{array}
\right. 
\end{equation*}
for all $i, j, l \in \{1, \dots, N\}$ and $\mathbf{x} \in \Omega$,
see \cite[Section 6]{LeNguyen:JSC2022} for details.
The Dirichlet and Neumann boundary conditions for $Z_N$ can be computed by the knowledges of $f_1$, $f_2$, and \eqref{zn}.
Solving the system of quasilinear elliptic equations \eqref{6.11} with the provided Dirichlet and Neumann data is basically a goal of Problem \ref{p1}. 
Doing so is the key step to compute $c$. 
See \cite{LeNguyen:JSC2022} for convexification method to compute $Z_N$ and the procedure to obtain $c$ from the knowledge of $Z_N.$


\subsection{Electrical impedance tomography}

We present here another potential application of our study in this paper to the 3D electrical impedance tomography (EIT) problem,  so-called the 3D Calder\'on problem, with only a part of the Dirichlet to Neumann map is given.
Let $\Omega = (-R, R)^3$ for some $R > 0$. 
Let the line of source be defined as
\begin{equation*} 
	L_{\rm sc} = \{\x_\alpha = (\alpha, 0, -R):   |\alpha| \leq R\} \subset \partial \Omega.
\end{equation*}
For each $\alpha \in [-R, R]$, let $\x_\alpha = (\alpha, 0, -R) \in L_{\rm sc}$ and $w = w(\x, \x_\alpha)$, $\x \in \Omega$, be the solution to
\begin{equation}
	\left\{
		\begin{array}{rcll}
			\Div\big(a(\x)\nabla w(\x, \x_\alpha)\big) &=& 0 &\x \in \Omega,\\
			w(\x, \x_\alpha) &=& f_1(\x, \x_\alpha) &\x \in \partial \Omega.
		\end{array}
	\right.
	\label{8}
\end{equation}
Here, $f_1(\x, \x_\alpha) > 0$ is a smooth approximation of the Dirac delta $\delta_0(\x - \x_\alpha)$. 
In EIT, $f_1(\x, \x_\alpha)$ represents the boundary electric voltage.
The EIT problem is formulated as follows.
\begin{problem}[Electrical impedance tomography]
	Determine the electric conductivity $a(\x)$, $\x \in \Omega,$ from the boundary measurement of the electric current
	$		
		f_2(\x, \x_\alpha) = \partial_\nu w(\x, \x_\alpha)
	$
	for all $\x \in \partial \Omega,$ $\x_\alpha \in L_{\rm sc}.$
 	\label{ip 11}
 \end{problem}
 
 \begin{remark}
 	Problem \ref{ip 11} only requests the data generated by the source moving on $L_{\rm sc}$. The dimension of this set of data is 3, including 1 dimension of the orbit $L_{\rm sc}$ of the source and 2 dimensions of the measurement surface $\partial \Omega$. This feature makes Problem \ref{ip 11} not over-determined because our goal is to reconstruct a 3D function $a$.
	This is unlike most of the works studying the EIT problem that request the whole Dirichlet to Neumann map $\Gamma: H^{1/2}(\partial \Omega) \to H^{-1/2}(\partial \Omega)$. Since $H^{1/2}(\partial \Omega)$, the domain of $\Gamma$, has uncountably infinity dimensions, the data requested by the Dirichlet to Neumann map is highly over-determined.
 \end{remark}
 

%

The EIT problem arises from bio-medical imaging; especially, in detecting early cancerous tumors in living tissues without operation. 
Most publications for the EIT problem studied the question how to reconstruct the electric conductivity $a(\x)$, $\x \in \Omega$ for some domain $\Omega$, from the Dirichlet to Neumann or the Neumann to Dirichlet map. 
We refer the reader to  \cite{Calderon:1980, Kohn:cpam1985, Nachman:am1996, SylvesterUhlmann:am1987} for some well-known uniqueness results of the EIT problem.
We list here a few publications for some  effective approaches to numerically solve the EIT problem: the D-bar method \cite{Knudsen:ip2009, Mueller:2020, Murphy:2009}, the methods based on optimization \cite{Gonzalez:camwa2017, Zhou:pm2015} and the convexification method \cite{KlibanovLiZhang:ip2019}. 
%
Although effective, these methods have drawbacks.
Firstly, the D-bar method is designed only for 2D.
Secondly, the methods based on optimization request good initial guess of the true solution.
And finally, the convexification method is time consuming.
Naturally, a new computational method should be studied in this direction. 
Our potential method to solve the 3D EIT problem is to reduce this inverse problem to a problem of the form \eqref{main eqn}.
Introduce the change of variable  
\begin{equation}
z(\x, \x_\alpha) = \log \big(\sqrt{a(\x)} w(\x, \x_\alpha)\big)
\quad \mbox{for all } \x \in \Omega, \x_\alpha \in L_{\rm sc}.
\label{zlog}
\end{equation}
%
 Let $\{\Psi_{n}\}_{n = 1}^\infty$ be the orthonormal basis of $L^2(-R, R)$ first introduced in \cite{Klibanov:jiip2017}. 
Like in Section \ref{inverse1}, we approximate the function $z(\x, \x_\alpha)$ 
as
\begin{equation}
	z(\x, \x_\alpha) = \sum_{n = 1}^\infty z_n(\x) \Psi_n(\alpha) \simeq \sum_{n = 1}^N z_n(\x) \Psi_n(\alpha)
	\quad 
	\label{3..3}
\end{equation}
for all $\x \in \Omega, \x_\alpha \in L_{\rm sc}$,
for some cut-off number $N$ where
\begin{equation}
	z_n(\x) = \int_{-R}^{R}z(\x, \x_\alpha)\Psi_n(\alpha) d\alpha 
\label{zn1}
\end{equation}
 for all $n \geq 1.$
 We choose $N$ such that the approximation in \eqref{3..3}  ``numerically" holds  for all $\x \in \partial \Omega$ where the data are known. 
 Then, it is not hard to verify, see \cite{KlibanovLiZhang:ip2019}, that 
 the vector $Z_N=(z_1,\ldots,z_N)$ satisfies the system
 \begin{equation}
 	\sum_{n = 1}^N s_{mn} \Delta z_n(\x) +  \sum_{n, l = 1}^N a_{mnl}\nabla z_n(\x) \cdot \nabla z_l(\x) = 0
	\quad \mbox{for all } \x \in \Omega, 
	\label{16}
 \end{equation}
 for each $m \in \{1, \dots, N\}$,
 where
 \begin{align*}
 	s_{mn} &= \int_{-R}^{R} \Psi_n'(\alpha) \Psi_m(\alpha)d\alpha,
	\\
	a_{mnl} &=2\int_{-R}^{R} \Psi_n(\alpha)\Psi_l'(\alpha) \Psi_m(\alpha)d\alpha.
 \end{align*}
 The Dirichlet and Neumann boundary conditions for $Z_N$ can be computed by the knowledges of the Dirichlet condition $f_1$, the given Neumann data $f_2$, \eqref{zlog} and \eqref{zn1}.
Solving the system of quasilinear elliptic equations \eqref{16} with the provided Dirichlet and Neumann data is basically a goal of Problem \ref{p1}.
We refer the reader to \cite{KlibanovLiZhang:ip2019} the step of computing $a$ from the knowledge of the vector $Z_N$.

The discussion above for the inverse scattering problem and the EIT problem motivates us to study Problem \ref{p1}. 
Since these inverse problems are out of the scope of this paper, we only mention them here to explain the significance of Problem \ref{p1}. 
In future works, we will use our solver for Problem \ref{p1} to solve these two important inverse problems.
 

\section{The iteration and linearization approach for Problem \ref{p1}} \label{sec 2}

Our approach to solve \eqref{main eqn} is based on linearization and iteration.
Assume that the solution $u^*$ to \eqref{main eqn} is in the space $H^p(\Omega)$ for some $p > \ceil{d/2} + 2$ where $\ceil{d/2}$ is the smallest integer that is greater than $d/2.$
Define the set of admissible solutions
\begin{equation}
	W = \big\{
		\varphi \in H^p(\Omega)\;:\, \varphi|_{\partial \Omega} = f, \partial_{\nu} \varphi|_{\partial \Omega} = g
	\big\}.
\end{equation}
Then, the assumption in Problem \ref{p1} implies that $W \ne \emptyset$, and $u^* \in W$.
We now construct a sequence $\{u_{n}\}_{n \geq 0}$ that converges to the solution $u^*$ to \eqref{main eqn}.
Take a function $u_0 \in W.$ 
Assume by induction that we have the knowledge of $u_{n}$ for some $n \geq 0$.
We find $u_{n+1}$ as follows.
Assume that $u^* = u_n + h$ for some $h \in H_0$ where
\[
	H_0 = \{\varphi \in H^p(\Omega)\,:\, \varphi|_{\partial \Omega} = 0, \partial_{\nu} \varphi|_{\partial \Omega} = 0\}.
\]
Plugging $u^* = u_n + h$ into \eqref{main eqn}, we have
\begin{equation}
	-\Div(A(\x) \nabla(u_n + h)(\x)) + F(\x, u_n(\x) + h(\x), \nabla u_n(\x) + \nabla h(\x) ) = 0
	\label{2.2}
\end{equation}
for all $\x \in \Omega.$
Heuristically, we assume at this moment that $h$ is small, that is, $\|h\|_{C^1(\Omega)} \ll 1$. 
\begin{remark}
The temporary assumption $\|h\|_{C^1(\Omega)} \ll 1$ is imposed for the suggestion in establishing a numerical scheme to find $u_{n + 1}$ while this condition is completely relaxed  in the proof of the convergence theorem.
\end{remark} 
By Taylor's expansion, we approximate \eqref{2.2} as
\begin{equation}
	-\Div(A \nabla u_n) -\Div(A \nabla h)+ F(\x, u_n(\x), \nabla u_n(\x)) + D\mathcal F(u_n) h = 0
	\label{2.3}
\end{equation}
for all $\x \in \Omega$
where
\[
	D\mathcal F(v)h = F_s(\x, v(\x), \nabla v(\x)) h + \nabla_{\p} F(\x, v(\x), \nabla v(\x))\cdot \nabla h(\x)
\] 
for all $v \in H^p(\Omega)$.
Here, $F_s$ and $\nabla_{\p} F$ are the partial derivative of $F$ with respect to its second variable and its gradient vector with respect to the third variable, respectively.

The next step is to compute a function $h \in H_0$ satisfying \eqref{2.3}. 
Since there is no guarantee for the existence of such a function $h$, we only compute a ``best fit" function $h$ by the Carleman-based quasi-reversibility method described below. 
For each $v \in H^p(\Omega)$, $\lambda > 1$, $\beta > 0$ and $\eta > 0$, define the functional $J_v^{\lambda, \beta, \eta}: H_0 \to \R$ as
\begin{multline*}
	J_v^{\lambda, \beta, \eta}(\varphi) = \int_{\Omega} e^{2\lambda \mu_\beta(\x)}\Big|
		-\Div(A \nabla  \varphi )- \Div(A \nabla v) + F(\x, v(\x), \nabla v(\x))  
		\\
		+ D\mathcal F(v) \varphi
	\Big|^2\,d\x
	+ \eta \|v + \varphi\|^2_{H^p(\Omega)}.
\end{multline*}
Here, the function $\mu_\beta$ is defined in \eqref{mu} and $\eta\|v + \varphi\|^2_{H^p(\Omega)}$ is a regularization term.

\begin{Proposition}
	For all  $\lambda > 1, \beta > 0$ and $\eta > 0$, for each $v \in H^p(\Omega),$ the functional 
	$J_v^{\lambda, \beta, \eta}: H_0 \to H_0$ has a unique minimizer.
	\label{prop_min}
\end{Proposition}
\begin{proof}
	It is obvious that $J_v^{\lambda, \beta, \eta}$ is coercive and weakly lower semicontinuous. Thus, it has a minimizer in $H_0$. The uniqueness of the minimizer can be deduced from the strict convexity of $J_v^{\lambda, \beta, \eta}$ in $H_0$.
\end{proof}

For each $n\geq 0$, thanks for Proposition \ref{prop_min}, we can minimize $J_{u_n}^{\lambda, \beta, \eta}(\varphi)$ on $H_0$. 
The unique minimizer is the desired function $h.$ 
We then set 
\begin{equation}
	u_{n + 1} = u_n + h.
\end{equation}
The construction of the sequence $\{u_n\}_{n \geq 0}$ above is summarized in Algorithm \ref{alg 1}. 
We will prove that the sequence $\{u_n\}_{n \geq 0}$  converges to $u^*$ in Section \ref{sec conv} as $n \to \infty$ and $\eta \to 0$.  
The presence of the Carleman weight $e^{2\lambda \mu_\beta(\x)}$ is a key point for us to prove this convergence result. 
\begin{algorithm}[h!]
\caption{\label{alg 1}The procedure to compute the numerical solution to \eqref{main eqn}}
	\begin{algorithmic}[1]
	\STATE \label{s1} 
	Choose a threshold number $0< \kappa_0 \ll 1.$ Choose an arbitrary initial solution $u_0 \in W.$
	\STATE Set $n = 0$.
		\STATE \label{step update} Solve the linear equation \eqref{2.3} for a function $h \in H_0$ by minimizing $J_{u_n}^{\lambda, \beta, \eta}(\varphi)$ on $H_0$.
		 Set $u_{n+1} = u_{n} + h$.
	\IF {$\|u_{n + 1} - u_n\|_{L^2(\Omega)} > \kappa_0$}
		\STATE Replace $n$ by $n + 1.$
		\STATE Go back to Step \ref{step update}.
	\ELSE	
		\STATE Set the computed solution $u_{\rm comp} = u_{n + 1}.$
	\ENDIF
	
\end{algorithmic}
\end{algorithm}


\section{The convergence analysis} \label{sec conv}

In this section, we prove that the sequence $\{u_n\}_{n \geq 0}$ generated by Algorithm \ref{alg 1} converges to the solution $u^*$ to \eqref{main eqn}.
The following result is the main theorem in this paper.
\begin{Theorem} \label{thm:main}
	Assume that $\|F\|_{C^2(\Omega, \R, \R^d)}$ is a finite number.
	Assume further that \eqref{main eqn} has a unique solution $u^*\in W$.
	Let $\{u_n\}_{n \geq 0}$ be the sequence generated by Algorithm \ref{alg 1}, where $u_0 \in W$ is chosen arbitrarily. 
	Then, there exist $\lambda_0 > 0$ and $\theta \in (0,1)$ such that, for all $\lambda > \lambda_0$,
	\begin{equation}
		\|u_{n + 1} - u^*\|_{\lambda, \mu, \beta}^2 \leq \theta^{n+1} \|u_0 - u^*\|_{\lambda, \mu, \beta}^2 + \eta \theta \frac{1 - \theta^{n+1}}{1-\theta} \|u^*\|_{H^p(\Omega)}^2.
		\label{3.3}
	\end{equation}
Here,
	\[
		\|v\|_{\lambda, \mu, \beta} = \Big[\int_{\Omega} e^{2\lambda \mu_\beta(\x)}\big( |v|^2 + |\nabla v|^2\big) \,d\x\Big]^{\frac{1}{2}} \quad \mbox{for all } v \in H^1(\Omega).
	\]
	\label{thm 3.1}
\end{Theorem}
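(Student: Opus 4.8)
The plan is to exploit the defining minimization property of the iterates together with the Carleman estimate in Lemma \ref{lemma carl}. Set $h_n = u_{n+1} - u_n \in H_0$, the minimizer of $J_{u_n}^{\lambda,\beta,\eta}$, and write $e_n = u_n - u^*$; note $e_n \in H_0$ since both $u_n$ and $u^*$ lie in $W$, and also $e_{n+1} = e_n + h_n$. The key comparison is to evaluate $J_{u_n}^{\lambda,\beta,\eta}$ at the competitor $\varphi = -e_n = u^* - u_n$, for which $u_n + \varphi = u^*$, so the regularization term becomes exactly $\eta\|u^*\|_{H^p(\Omega)}^2$. The PDE residual at this competitor is the Taylor remainder: since $u^*$ solves \eqref{main eqn} exactly while \eqref{2.3} is only the linearization, the quantity inside the first integral equals $F(\x,u^*,\nabla u^*) - F(\x,u_n,\nabla u_n) - D\mathcal F(u_n)(u^*-u_n)$, which by Taylor's theorem and the bound $\|F\|_{C^2} < \infty$ is $O(|e_n|^2 + |\nabla e_n|^2)$ pointwise. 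Hence $J_{u_n}^{\lambda,\beta,\eta}(-e_n) \le C\int_\Omega e^{2\lambda\mu_\beta}(|e_n|^2+|\nabla e_n|^2)^2\,d\x + \eta\|u^*\|_{H^p}^2$; using a Sobolev/$C^1$ bound on $e_n$ (available since $p > \lceil d/2\rceil + 2$) the quartic term is controlled by a constant times $\|e_n\|_{\lambda,\mu,\beta}^2$, but more carefully one keeps one factor of $\|e_n\|_{C^1}$ explicit so it can be absorbed — this is where one must be slightly careful, and I would instead estimate the remainder by $C\|e_n\|_{C^1(\Omega)}\cdot e^{2\lambda\mu_\beta}(|e_n|^2+|\nabla e_n|^2)$, giving $J_{u_n}^{\lambda,\beta,\eta}(-e_n) \le C M \|e_n\|_{\lambda,\mu,\beta}^2 + \eta\|u^*\|_{H^p}^2$ where $M$ bounds the $C^1$ norms of the iterates uniformly (which itself needs a short a priori argument, or one absorbs it into $\theta$).

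Next I would use minimality, $J_{u_n}^{\lambda,\beta,\eta}(h_n) \le J_{u_n}^{\lambda,\beta,\eta}(-e_n)$, and then bound $\|e_{n+1}\|_{\lambda,\mu,\beta}^2$ from below by $J_{u_n}^{\lambda,\beta,\eta}(h_n)$ using the Carleman estimate. The point is that $e_{n+1} = e_n + h_n \in H_0$ satisfies the boundary conditions \eqref{3.1}, so Lemma \ref{lemma carl} applies to it. I would rewrite $-\Div(A\nabla h_n) - \Div(A\nabla u_n) + F(\x,u_n,\nabla u_n) + D\mathcal F(u_n)h_n$ as $-\Div(A\nabla e_{n+1}) + [\text{lower-order terms in } e_n, h_n]$, where I move the exact residual of $u^*$ (which is zero) in and out; concretely, adding and subtracting the equation for $u^*$ shows the residual equals $-\Div(A\nabla e_{n+1})$ plus a term of the form $D\mathcal F(u_n)h_n + F(\x,u_n,\nabla u_n) - F(\x,u^*,\nabla u^*) - [\text{stuff}]$, which after regrouping is $-\Div(A\nabla e_{n+1})$ minus a quantity bounded by $C(|e_n| + |\nabla e_n| + |h_n| + |\nabla h_n|)$. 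Then $e^{2\lambda\mu_\beta}|\text{residual}|^2 \ge \tfrac12 e^{2\lambda\mu_\beta}|\Div(A\nabla e_{n+1})|^2 - C e^{2\lambda\mu_\beta}(|e_n|^2+|\nabla e_n|^2+|h_n|^2+|\nabla h_n|^2)$, and applying \eqref{Car est} to the first term gives $\ge \tfrac{C\lambda}{2}\|e_{n+1}\|_{\lambda,\mu,\beta}^2 - C(\|e_n\|_{\lambda,\mu,\beta}^2 + \|h_n\|_{\lambda,\mu,\beta}^2)$. Since $h_n = e_{n+1} - e_n$, we have $\|h_n\|_{\lambda,\mu,\beta}^2 \le 2\|e_{n+1}\|_{\lambda,\mu,\beta}^2 + 2\|e_n\|_{\lambda,\mu,\beta}^2$, and choosing $\lambda$ large enough the $C\lambda\|e_{n+1}\|^2$ term dominates: we get $J_{u_n}^{\lambda,\beta,\eta}(h_n) \ge c\lambda\|e_{n+1}\|_{\lambda,\mu,\beta}^2 - C\|e_n\|_{\lambda,\mu,\beta}^2$ for $\lambda \ge \lambda_0$.

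Combining the two inequalities yields $c\lambda\|e_{n+1}\|_{\lambda,\mu,\beta}^2 \le (C + CM)\|e_n\|_{\lambda,\mu,\beta}^2 + \eta\|u^*\|_{H^p}^2$, i.e.
\[
\|e_{n+1}\|_{\lambda,\mu,\beta}^2 \le \frac{C+CM}{c\lambda}\|e_n\|_{\lambda,\mu,\beta}^2 + \frac{\eta}{c\lambda}\|u^*\|_{H^p}^2.
\]
Now fix $\lambda$ so large that $\theta := (C+CM)/(c\lambda) \in (0,1)$ and also $1/(c\lambda) \le \theta$; this gives the one-step contraction $\|e_{n+1}\|_{\lambda,\mu,\beta}^2 \le \theta\|e_n\|_{\lambda,\mu,\beta}^2 + \eta\theta\|u^*\|_{H^p}^2$. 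Iterating this linear recursion from $n$ down to $0$ (a geometric sum) produces exactly \eqref{3.3}: the homogeneous part contributes $\theta^{n+1}\|e_0\|_{\lambda,\mu,\beta}^2 = \theta^{n+1}\|u_0 - u^*\|_{\lambda,\mu,\beta}^2$, and the inhomogeneous part sums to $\eta\theta(1+\theta+\dots+\theta^n)\|u^*\|_{H^p}^2 = \eta\theta\frac{1-\theta^{n+1}}{1-\theta}\|u^*\|_{H^p}^2$.

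The main obstacle I anticipate is controlling the Taylor remainder uniformly — specifically justifying the bound $M$ on the $C^1$ norms of all iterates $u_n$, since a priori the minimizers $h_n$ need not be small. One clean way around this: the recursion itself, once established with any fixed constant in place of $M$ (using that $F \in C^2$ with bounded norm gives a remainder bound $|F(\x,a,b) - F(\x,a',b') - F_s\cdot(a-a') - \nabla_\p F\cdot(b-b')| \le \tfrac12\|F\|_{C^2}(|a-a'|^2 + |b-b'|^2)$ with no reference to the size of the arguments), shows $\|e_n\|_{\lambda,\mu,\beta}$ is bounded; but passing from the weighted $H^1$-norm to $C^1$ requires elliptic regularity on $e_{n+1}$, which in turn needs the $H^p$ bound that the regularization term $\eta\|u_n+\varphi\|_{H^p}^2$ is designed to supply. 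So the careful write-up should first extract, from minimality, that $\eta\|u_{n+1}\|_{H^p}^2 \le J_{u_n}^{\lambda,\beta,\eta}(h_n) \le J_{u_n}^{\lambda,\beta,\eta}(-e_n)$, bound the right side, and thereby get a uniform $H^p$ (hence $C^1$, hence $M$) bound before closing the contraction argument — a bootstrapping that must be sequenced correctly. A secondary technical point is keeping track of the dependence of the various constants $C$ on $\beta$ (fixed once and for all at $\beta_0$) versus on $\lambda$ (sent to infinity), which is exactly the structure Lemma \ref{lemma carl} provides.
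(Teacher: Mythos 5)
Your overall architecture (use the minimality of $h_n$, lower-bound the residual via the Carleman estimate applied to $e_{n+1}=u_{n+1}-u^*\in H_0$, obtain a one-step recursion, and sum the geometric series) matches the paper's, and your lower-bound step and the final iteration are essentially identical to what the paper does. The divergence is in how minimality is exploited, and this is where your argument has a genuine gap. You compare functional values, $J_{u_n}^{\lambda,\beta,\eta}(h_n)\le J_{u_n}^{\lambda,\beta,\eta}(u^*-u_n)$, and at the competitor the residual is the second-order Taylor remainder, pointwise of size $C(|e_n|^2+|\nabla e_n|^2)$. Squaring it produces a quartic term, and to fold it back into $\|e_n\|_{\lambda,\mu,\beta}^2$ you must invoke a uniform $C^1$ bound $M$ on the errors $e_n$ (equivalently on the iterates $u_n$). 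You flag this yourself, but the bootstrap you propose does not close: minimality only gives $\eta\|u_{n+1}\|_{H^p(\Omega)}^2\le J_{u_n}^{\lambda,\beta,\eta}(u^*-u_n)\le C M_n\|e_n\|_{\lambda,\mu,\beta}^2+\eta\|u^*\|_{H^p(\Omega)}^2$, i.e. $\|u_{n+1}\|_{H^p(\Omega)}^2\le C M_n\|e_n\|_{\lambda,\mu,\beta}^2/\eta+\|u^*\|_{H^p(\Omega)}^2$, which carries a factor $1/\eta$, depends on the as-yet-unknown $M_n$, and is not uniform in $n$ (nor as $\eta\to0$) unless $\|e_n\|_{\lambda,\mu,\beta}^2=O(\eta)$, which fails for an arbitrary $u_0$. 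Since $u_0\in W$ is arbitrary and the whole point of the theorem is convergence without smallness of $u_0-u^*$, the contraction constant $\theta=(C+CM)/(c\lambda)$ in your write-up is not actually well defined without this missing a priori estimate.

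The paper avoids the issue entirely by never comparing functional values: it writes the Euler--Lagrange identity \eqref{3.4} satisfied by the minimizer, subtracts the trivial identity \eqref{3.6} coming from the fact that $u^*$ has zero residual, and then tests with $\varphi=u_{n+1}-u^*$. In that route the nonlinearity enters only through the first-order quantities $F(\x,u_n,\nabla u_n)-F(\x,u^*,\nabla u^*)$ and $D\mathcal F(u_n)(\cdot)$, each bounded linearly by $C(|\cdot|+|\nabla\cdot|)$ using only boundedness of the first derivatives of $F$; no quadratic remainder, hence no $C^1$ bound on the iterates and no smallness of $h$, is ever needed. If you want to salvage your energy-comparison route, you would have to supply the uniform $H^p$ (hence $C^1$) bound on the iterates by a separate argument with constants independent of $\eta$ and $n$, which is substantially harder than switching to the variational-identity argument.
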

\begin{proof}
	Fix $n \geq 0$.
	Let $h = u_{n+1} - u_n$.
	Due to Step \ref{step update} in Algorithm \ref{alg 1}, $h$ is the minimizer of $J_{u_n}^{\lambda, \beta, \eta}$. 
	By the variational principle, we have
	\begin{multline}
		\int_{\Omega} e^{2\lambda \mu_\beta(\x)} \big(-\Div(A \nabla h)- \Div(A \nabla  u_{n}) + F(\x,  u_n, \nabla  u_n) + D\mathcal F(u_n) h\big)
		\\
		\big(-\Div(A \nabla  \varphi) + D\mathcal F(u_n)\varphi\big)\,d\x 
		+ \eta\langle u_n + h, \varphi\rangle_{H^p(\Omega)} = 0
		\label{3.4}
	\end{multline}
for all $\varphi \in H_0.$
Since $u_{n+1} = u_n + h$, we can rewrite \eqref{3.4} as
\begin{multline}
		\int_{\Omega} e^{2\lambda \mu_\beta(\x)} \big(-\Div(A \nabla u_{n+1}) + F(\x,  u_n, \nabla  u_n) + D\mathcal F(u_n) (u_{n + 1} - u_n)\big)
		\\
		\big(-\Div(A \nabla  \varphi) + D\mathcal F(u_n)\varphi\big)\,d\x 
		+ \eta\langle u_{n+1}, \varphi\rangle_{H^p(\Omega)} = 0
		\label{3.5}
	\end{multline}
	for all $\varphi \in H_0.$
	As $u^*$ is the solution to \eqref{main eqn}, we have
	\begin{multline}
		\int_\Omega e^{2\lambda \mu_\beta(\x)} \big(-\Div(A \nabla  u^*) + F(\x, u^*, \nabla u^*)\big)
		\\
		\big(-\Div(A \nabla  \varphi) + D\mathcal F(u_n)\varphi\big)\, d\x 
	= 0
	\label{3.6}
	\end{multline}
for all $\varphi \in H_0.$
It follows from \eqref{3.5} and \eqref{3.6} that for all $\varphi \in H_0$
\begin{multline}
	\int_\Omega e^{2\lambda \mu_\beta(\x)}
	\big( - \Div(A \nabla (u_{n + 1} - u^*) )
		+ F(\x, u_n, \nabla u_n) - F(\x, u^*, \nabla u^*)
		\\
		+ D\mathcal F(u_n)(u_{n+1}  - u_n)
	\big)
	\big(-\Div(A \nabla  \varphi) + D\mathcal F(u_n)\varphi\big)\,d\x
	\\
	+ \eta\langle u_{n+1}, \varphi\rangle_{H^p(\Omega)} = 0.
	\label{3.7}
\end{multline}
Take $\varphi = u_{n+ 1} - u^* \in H_0$.
It follows from \eqref{3.7} that
\begin{multline}
	\int_{\Omega} e^{2\lambda \mu_\beta(\x)} 
	\Big[|\Div(A \nabla  \varphi)|^2
	-D\mathcal F(u_n) \varphi \,\Div(A \nabla  \varphi)
		-\big(F(\x, u_n, \nabla u_n) 
		\\
		- F(\x, u^*, \nabla u^*)\big)\Div(A \nabla \varphi) 		
		+ \big(F(\x, u_n, \nabla u_n) - F(\x, u^*, \nabla u^*)\big)D\mathcal F(u_n)\varphi
		\\
		-\Div(A \nabla  \varphi) D\mathcal F(u_n)(u_{n+1}  - u_n)
		+ D\mathcal F(u_n)(u_{n+1}  - u_n)D\mathcal F(u_n)\varphi
	\Big]
	\,d\x 
	\\
	= - \eta\langle u_{n+1}, \varphi\rangle_{H^p(\Omega)}.
\end{multline}
As $\|F\|_{C^2(\Omega, \R, \R^d)}=C<+\infty$, we can estimate
\begin{align*}
	&|D\mathcal F(u_n)\varphi| \leq C(|\varphi| + |\nabla \varphi|),\\
	&|F(\x, u_n, \nabla u_n) - F(\x, u^*, \nabla u^*)| \leq C(|u_n - u^*| + |\nabla(u_n - u^*)|),
	\\
	&|D\mathcal F(u_n)(u_{n+1}  - u_n)| = |D\mathcal F(u_n)(u_{n+1}  - u^* + u^* - u_n)|
	\\& \hspace{1cm} \leq C(|u_{n+1} - u^*| + |\nabla (u_{n+1} - u^*)|) + C(|u_{n} - u^*| + |\nabla (u_{n} - u^*)|),
\\&\mbox{and}
\\
	& - \eta\langle u_{n+1}, \varphi\rangle_{H^p(\Omega)} = - \eta\langle \varphi + u^*, \varphi\rangle_{H^p(\Omega)}
	\\
	&\hspace{2.9cm}= -\eta\|\varphi\|_{H^p(\Omega)}^2  -\eta \langle u^*, \varphi\rangle_{H^p(\Omega)} \leq \frac{\eta}{2}\|u^*\|_{H^p(\Omega)}^2.
\end{align*}
These estimates, together with the inequality $|ab| \leq 4a^2 + b^2/8$, imply
\begin{multline}
	\int_{\Omega} e^{2\lambda \mu_\beta(\x)} |\Div(A \nabla  \varphi)|^2\,d\x 
	\leq C
	\Big[
		\int_{\Omega}  e^{2\lambda \mu_\beta(\x)} (|\varphi|^2 + |\nabla \varphi|^2)\, d\x
		\\
		+ \int_{\Omega}  e^{2\lambda \mu_\beta(\x)} (|u_n - u^*|^2 + |\nabla (u_n - u^*)|^2) \,d\x
	\Big]
	+ \frac{\eta}{2}\|u^*\|_{H^p(\Omega)}^2.
	\label{3.9}
\end{multline}
Applying the Carleman estimate \eqref{Car est} for the function $\varphi$, we have
\begin{multline}
		\int_{\Omega} e^{2\lambda \mu_\beta(\x)}|\Div(A \nabla  \varphi)|^2\,d\x	
		\\
		\geq	
		C\lambda  \int_{\Omega}  e^{2\lambda \mu_\beta(\x)}|\nabla \varphi(\x)|^2\,d\x
		+ C\lambda^3  \int_{\Omega}   e^{2\lambda \mu_\beta(\x)}|\varphi(\x)|^2\,d\x.		
		\label{3.10}
	\end{multline}
Combining \eqref{3.9} and \eqref{3.10}, we have
\begin{multline}
	\lambda  \int_{\Omega}  e^{2\lambda \mu_\beta(\x)}|\nabla \varphi(\x)|^2\,d\x
		+ \lambda^3  \int_{\Omega}   e^{2\lambda \mu_\beta(\x)}|\varphi(\x)|^2\,d\x
		\\
		\leq
	C
	\Big[
		\int_{\Omega}  e^{2\lambda \mu_\beta(\x)} (|\varphi|^2 + |\nabla \varphi|^2) \,d\x
		+ \int_{\Omega}  e^{2\lambda \mu_\beta(\x)} (|u_n - u^*|^2 + |\nabla (u_n - u^*)|^2) \,d\x
	\Big]
	\\
	+ \frac{\eta}{2}\|u^*\|_{H^p(\Omega)}^2.
	\label{3.11}
\end{multline}
Letting $\lambda$ be sufficiently large,  we can simplify \eqref{3.11} as
\begin{multline}
	\lambda \int_{\Omega}  e^{2\lambda \mu_\beta(\x)} (|\varphi|^2 + |\nabla \varphi|^2) \,d\x
	\leq C \int_{\Omega}  e^{2\lambda \mu_\beta(\x)} (|u_n - u^*|^2 + |\nabla (u_n - u^*)|^2) \,d\x
	\\
	+ \frac{\eta}{2}\|u^*\|_{H^p(\Omega)}^2.
	\label{3.12}
\end{multline}
Recall that $\varphi = u_{n + 1} - u^*$. 
We get from \eqref{3.12} that
\begin{equation}
	\|u_{n + 1} - u^*\|_{\lambda, \mu, \beta}^2 \leq \frac{C}{\lambda} \|u_{n} - u^*\|_{\lambda, \mu, \beta}^2 + \frac{\eta}{2 \lambda}\|u^*\|_{H^p(\Omega)}^2.
	\label{3.13}
\end{equation}
Applying \eqref{3.13} for $n - 1$ and denoting $\theta = C/\lambda \in (0, 1)$,
we have
\begin{align*}
	\|u_{n + 1} - u^*\|_{\lambda, \mu, \beta}^2 
	&\leq \theta \Big[\theta \|u_{n-1} - u^*\|_{\lambda, \mu, \beta}^2 + \eta \theta\|u^*\|_{H^p(\Omega)}^2\Big] + \eta \theta\|u^*\|_{H^p(\Omega)}^2
	\\
	&= \theta^2   \|u_{n-1} - u^*\|_{\lambda, \mu, \beta}^2 + \eta \theta(1 + \theta)\|u^*\|_{H^p(\Omega)}^2.
\end{align*}
By induction, we have
\begin{equation}
	\|u_{n + 1} - u^*\|_{\lambda, \mu, \beta}^2 \leq \theta^{n+1} \|u_0 - u^*\|_{\lambda, \mu, \beta}^2 +\eta \theta \sum_{i = 0}^n \theta^n \|u^*\|_{H^p(\Omega)}^2,
\end{equation}
which implies \eqref{3.3}. 
The proof is complete.
\end{proof}

\begin{remark}[Removing the boundedness condition of $F$ in $C^2$ in Theorem \ref{thm 3.1}]
In the case when $\|F\|_{C^2(\Omega \times \R \times \R^{d + 1})} = \infty$, 
we need to assume that we know in advance that the true solution $u^*$ to \eqref{main eqn} belongs to the ball $B_M$ of $C^1(\Omega)$ for some $M>0$.
This assumption does not weaken the result since $M$ can be arbitrarily large.
Define the cut-off function $\chi \in C^\infty(\Omega \times \R \times \R^{d})$ as 
\[
	\chi(\x, s, \p) = \left\{
		\begin{array}{ll}
			1 & |s| + |\p| < M,\\
			0 & |s| + |\p| > 2 M
		\end{array}
	\right.
\]
and set $\widetilde F =  \chi F.$
Since $|u^*| + |\nabla u^*| < M$, it is obvious that $u^*$ solves the problem
\begin{equation}
\left\{
	\begin{array}{ll}
		 -\Div(A(\x) \nabla  u(\x))  +\widetilde F(\x, u(\x), \nabla u(\x)) = 0 &\x \in \Omega,\\
		u(\x) = f(\x) &\x \in \partial\Omega,\\
		\partial_\nu u(\x) = g(\x) &\x \in \partial \Omega.
	\end{array}
\right.
\label{aux eqn}
\end{equation}
Now, we can apply Algorithm \ref{alg 1}  for \eqref{aux eqn} to compute $u^*$. 
 \end{remark}
 
 \begin{remark}
 	Theorem \ref{thm 3.1} and estimate \eqref{3.3} rigorously guarantee that each sequence generated by Algorithm \ref{alg 1} converges to $u^*$ at the exponential rate. This fact is numerically confirmed by our numerical results in Section \ref{sec num}.
\label{rm 3.2}
 \end{remark}
 
 \begin{remark}
	As seen in  the proof of Theorem \ref{thm 3.1}, the efficiency of Algorithm \ref{alg 1} is guaranteed by Carleman estimate \eqref{Car est}.
Therefore, we call the proposed method described in Algorithm \ref{alg 1} the second generation of Carleman-based numerical methods.
This second generation includes the method in \cite{LeNguyen:jiip2022, NguyenKlibanov:ip2022} in which the iteration scheme is developed based on the contraction principle and Carleman estimates.
The first generation of 		
		 Carleman-based numerical method was developed in \cite{KlibanovIoussoupova:SMA1995}, which is called the convexification.
	See \cite{KlibanovNik:ra2017, KlibanovNguyenTran:JCP2022, LeNguyen:jiip2022, LeNguyen:JSC2022} for some following up results.
	Like the convexification method, Algorithm \ref{alg 1} can be used to compute solutions to nonlinear PDEs without requesting an initial good guess.
	The advantage of our new method is the fast convergence rate, see Remark \ref{rm 3.2}.
\end{remark}

\section{Numerical study} \label{sec num}

In this section, we present some numerical results obtained by Algorithm \ref{alg 1}.
For simplicity, we set $d = 2$ and $\Omega = (-1, 1)^2$. 
On $\Omega,$ we arrange an $N \times N$ grid points
\[
	\mathcal G = \{(x_i = -1 + (i - 1)\delta, y_j = -1 + (j - 1)\delta): 1 \leq i, j \leq N\}
\] 
where $\delta  = 2/(N-1).$ In our numerical scripts, $N = 80$.

In Step \ref{s1} of Algorithm \ref{alg 1}, we choose a function $u_0 \in W$. 
It is natural to find a function $u_0$ satisfying the equation obtained by removing from \eqref{main eqn} the nonlinearity $F(\x, u, \nabla u)$. 
We apply the Carleman-based quasi-reversibility method to do so.
That means, $u_0$ is the minimizer of
\begin{equation}
	J_0^{\lambda, \beta, \eta}(\varphi) =  \int_{\Omega} e^{\lambda \mu_\beta(\x)}|\Div(A \nabla  \varphi)|^2 d\x+ \eta\|\varphi\|^2_{H^2(\Omega)}
	\label{J0}
\end{equation}
subject to the boundary conditions in \eqref{bdry}.
To simplify the efforts in implementation, the norm in the regularization term is the $H^2(\Omega)$-norm rather than the $H^p(\Omega)$-norm. This change does not affect the performance of Algorithm \ref{alg 1}. Algorithm \ref{alg 1} still  provides satisfactory solutions to \eqref{main eqn}.
\begin{remark}
	We employ the  Carleman-based quasi-reversibility method to find $u_0$  for the consistency to Step \ref{step update} of Algorithm 1. 
	Since $u_0 \in W$ can be chosen arbitrarily, one can use the quasi-reversibility method without the presence of the Carleman weight function $e^{\lambda \mu_\beta(\x)}$. 
	In our computation for all numerical examples below,   $\lambda = 4$, $\beta = 10$ and $\x_0 = (-4, 0).$
	The regularized parameter is $\eta = 10^{-4}$. The threshold number $\kappa_0 = 10^{-6}.$
\end{remark}

We minimize $J_0^{\lambda, \beta, \eta}$ on $H_0$ by the least square MATLAB command ``lsqlin". 
The implementation for the quasi-reversibility method to minimize more general functionals than $J_0^{\lambda, \beta, \eta}$ was described in \cite[\S 5.3]{LeNguyen:jiip2022} and in \cite[\S 5]{Nguyen:CAMWA2020}. We do not repeat this process here. 
In Step \ref{step update} of Algorithm \ref{alg 1}, given $u_n \in W,$ we minimize the functional $J^{\lambda, \beta, \eta}_{u_n}$ on $H_0$. 
Again, we refer the reader to \cite[\S 5.3]{LeNguyen:jiip2022} and in \cite[\S 5]{Nguyen:CAMWA2020} for details in implementation.
The scripts for other steps of Algorithm \ref{alg 1} can be written easily.

\subsection{Quasilinear elliptic equations}

The convexification method, first introduced in \cite{KlibanovIoussoupova:SMA1995}, was used to numerically  solve quasilinear elliptic equations in \cite{KlibanovNik:ra2017, KlibanovNguyenTran:JCP2022, LeNguyen:JSC2022}.
Our approach here is of course different based on iteration and linearization.
In particular, Step \ref{step update} in Algorithm \ref{alg 1} is very efficient as we only need to solve the linear equation \eqref{2.3} as opposed to solving directly a nonlinear quasilinear elliptic equation.
In this subsection, we present two (2) numerical tests. In both tests, we choose the matrix $A$ to be \[A = \left[
	\begin{array}{ll}
		2&1\\1&2
	\end{array}
\right].\]
That means, $\Div(A \nabla u) = 2u_{xx} + 2u_{xy} + 2u_{yy}.$

\medskip

In test 1, we solve \eqref{main eqn} when 
\begin{equation}
	F(\x, s, \p) = s + |\p| - \big(- x^2 + 2y^2 + \sqrt{4x^2 + 16y^2} - 4\big)
	\label{F1noHJ}
\end{equation}
for all $\x = (x, y) \in \Omega$, $s \in \R$ and $\p \in \R^2$.
The boundary conditions are given by
\begin{equation}
	u(\x) = -x^2 + 2y^2, \quad \partial_\nu u(\x) = \langle -2x, 4y\rangle \cdot \nu
	\label{Cauchy1}
\end{equation}
for all $\x = (x, y) \in \partial \Omega.$
The true solution of \eqref{main eqn} is the function $u_{\rm true}(x, y) = -x^2 + 2y^2$. 

\begin{figure}[h!]
	\subfloat[The function $u^*$]{\includegraphics[width = .3\textwidth]{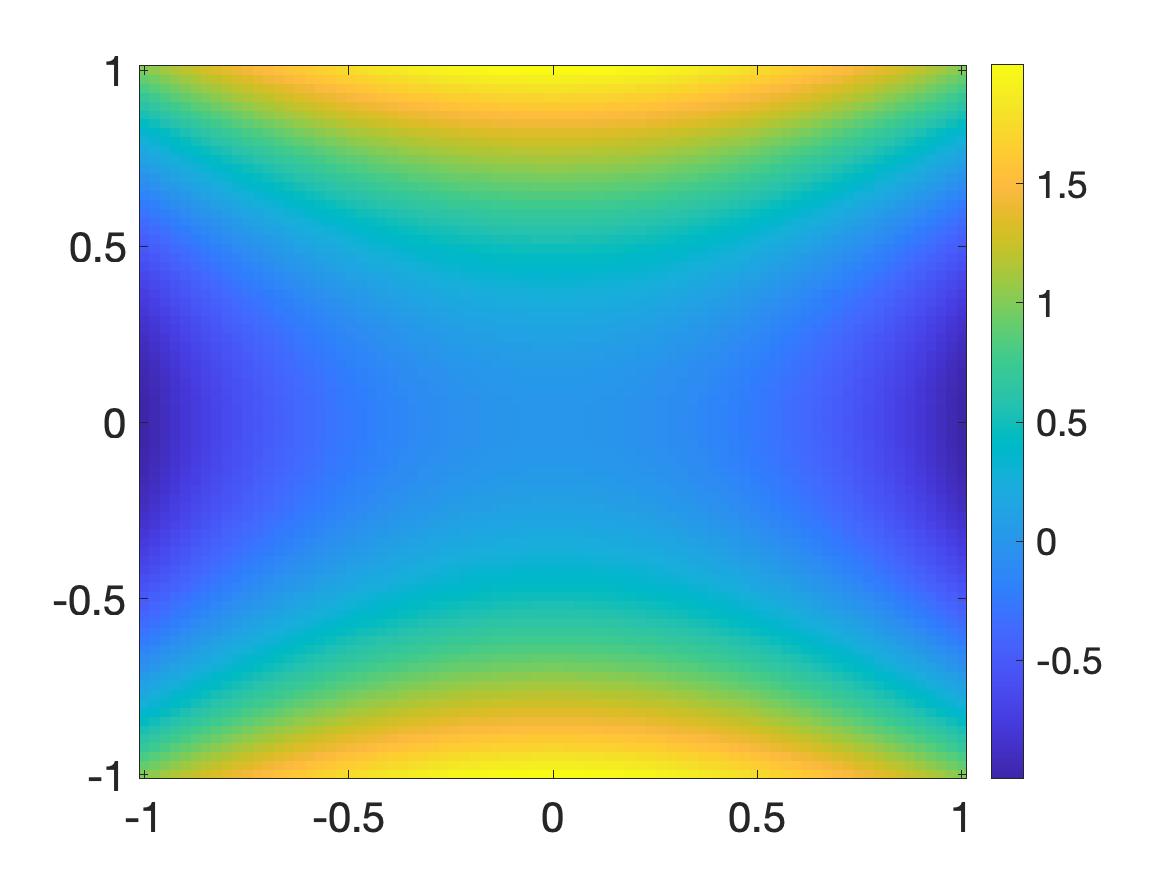}}
	\quad
	\subfloat[ The relative error $\frac{|u^*(\x) - u_{\rm comp}(\x)|}{\|u_{\rm true}\|_{L^{\infty}(\Omega)}}$]{\includegraphics[width = .3\textwidth]{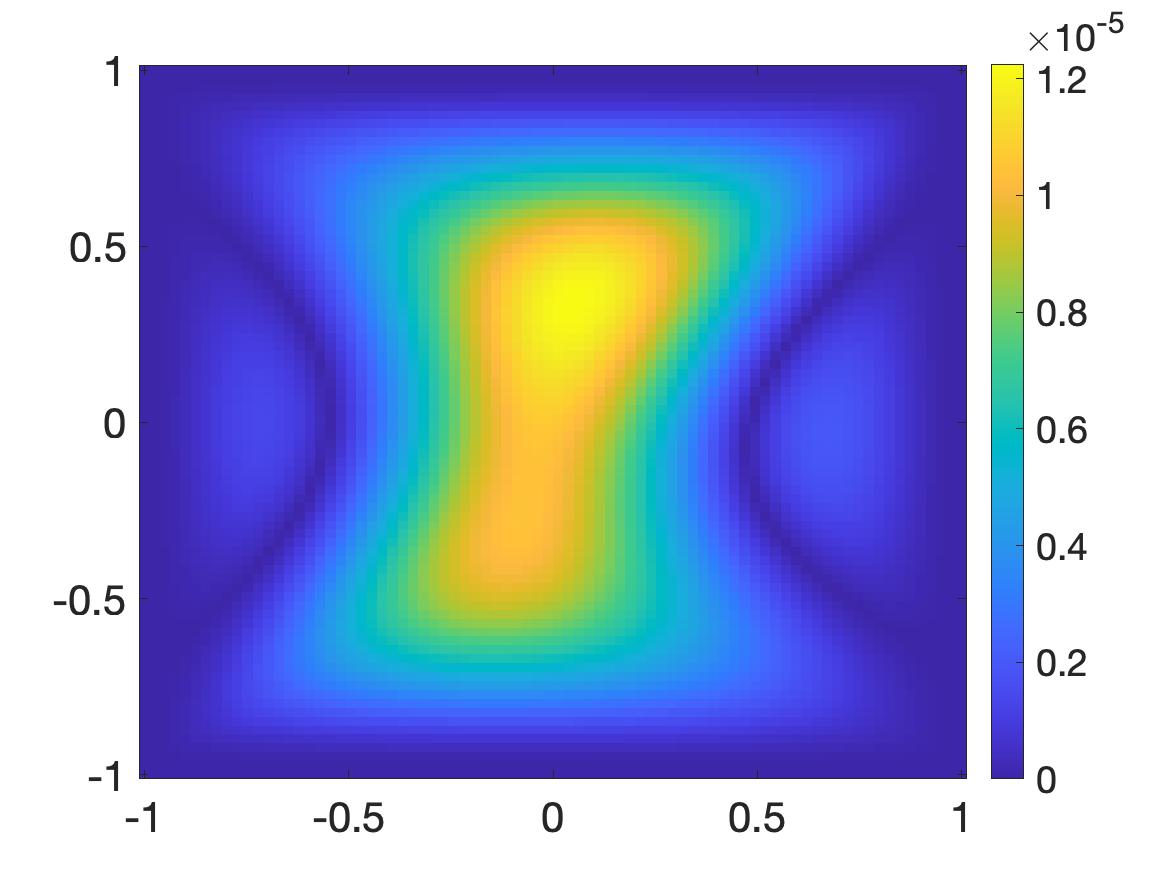}}
	\quad
	\subfloat[\label{1c} $\|u_n - u_{n - 1}\|_{L^{2}(\Omega)}.$ The horizontal axis is the number of iteration $n.$]{\includegraphics[width = .3\textwidth]{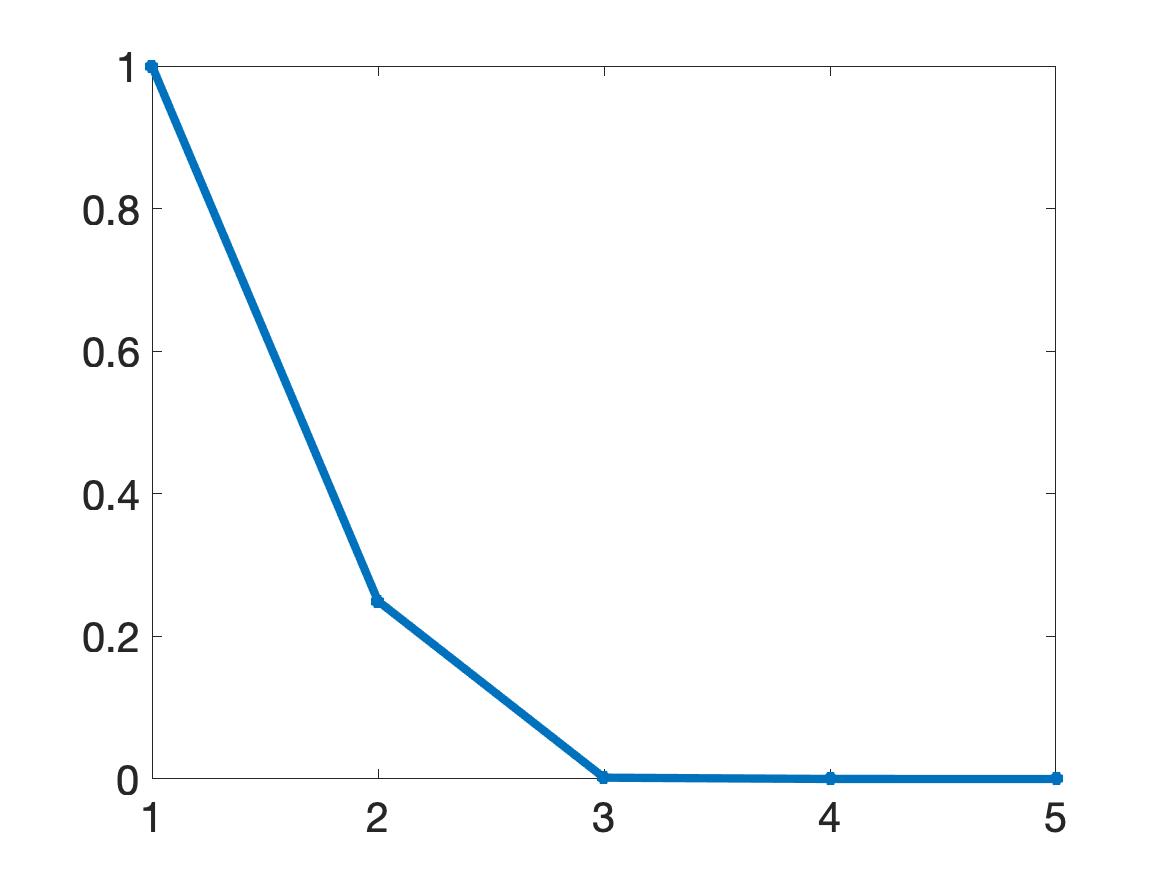}}
	\caption{\label{noHJ1} Test 1. The numerical solution to \eqref{main eqn} where $F$ is given in \eqref{F1noHJ} and the boundary data are given in \eqref{Cauchy1}.}
	\end{figure}
It is evident from Figure \ref{noHJ1} that Algorithm \ref{alg 1} provides out of expectation solution for test 1. The relative error 
$\|u_{\rm true} - u_{\rm comp}\|_{L^{\infty}(\Omega)}/\|u_{\rm true}\|_{L^{\infty}(\Omega)} = 1.23 \times 10^{-5}$.
One can see from Figure \ref{1c} that Algorithm \ref{alg 1} converges at the third iteration. 

\medskip

In test 2, we solve \eqref{main eqn} when 
\begin{multline}
	F(\x, s, \p) =  |\p| - \Big[\sqrt{\Big[\frac{\pi}{2} \cos\big(\frac{\pi}{2}(x + y)\big)  + e^x\Big]^2 + \frac{\pi^2}{4} \cos^2\big(\frac{\pi}{2}(x + y)\big)} 
	\\
	+ \frac{3\pi^2}{2} \sin\big(\frac{\pi}{2}(x + y)\big) - 2e^x 
	\Big]
	\label{F2noHJ}
\end{multline}
for all $\x = (x, y) \in \Omega$, $s \in \R$ and $\p \in \R^2$.
The boundary conditions are given by
\begin{equation}
	u(\x) = \sin\big(\frac{\pi}{2}(x + y)\big) + e^x, \quad 
	\partial_\nu u(\x) = \big\langle \frac{\pi}{2} \cos\big(\frac{\pi}{2}(x + y)\big) + e^x, \cos\big(\frac{\pi}{2}(x + y)\big) \big\rangle \cdot \nu
		\label{Cauchy2}
\end{equation}
for all $\x = (x, y) \in \partial \Omega.$
The true solution of \eqref{main eqn} is the function $u_{\rm true}(x, y) =\sin\big(\frac{\pi}{2}(x + y)\big) + e^x$. 

\begin{figure}[h!]
	\subfloat[The function $u^*$]{\includegraphics[width = .3\textwidth]{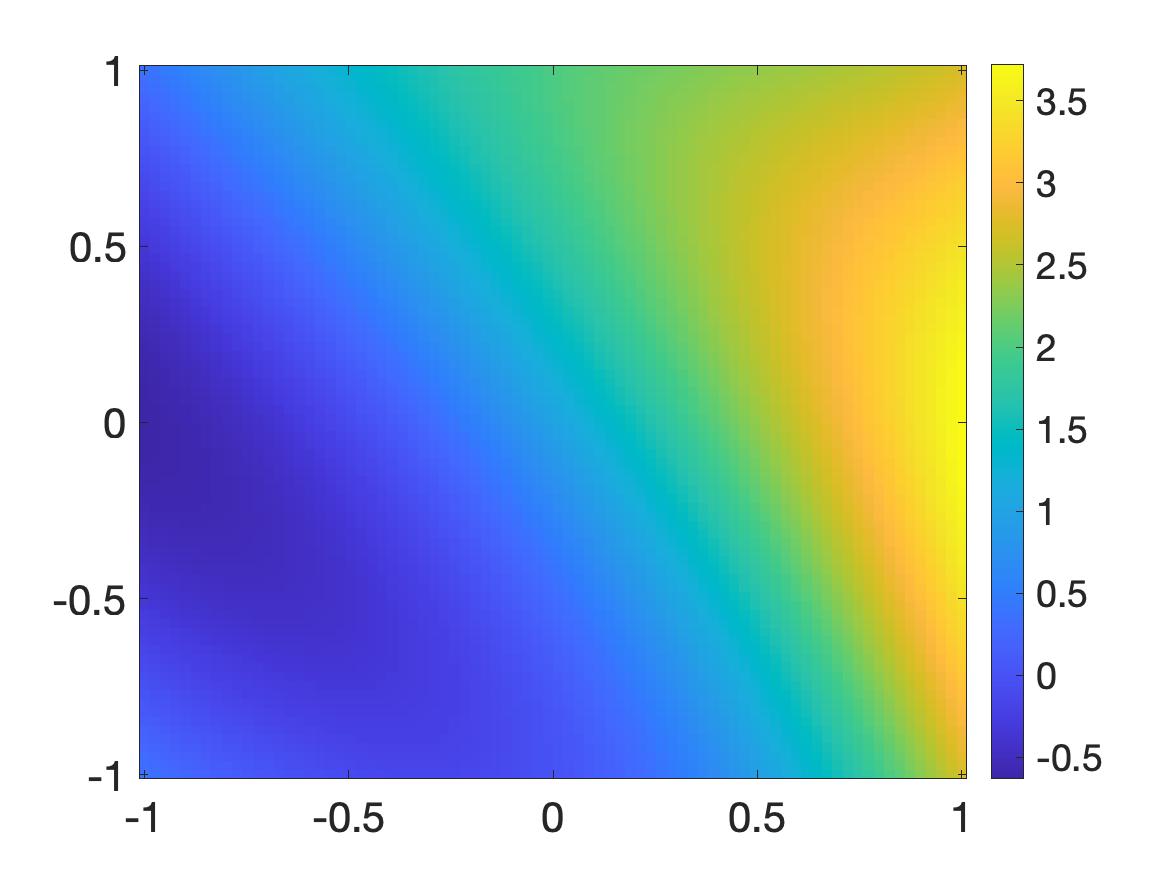}}
	\quad
	\subfloat[ The relative error $\frac{|u^*(\x) - u_{\rm comp}(\x)|}{\|u_{\rm true}\|_{L^{\infty}(\Omega)}}$]{\includegraphics[width = .3\textwidth]{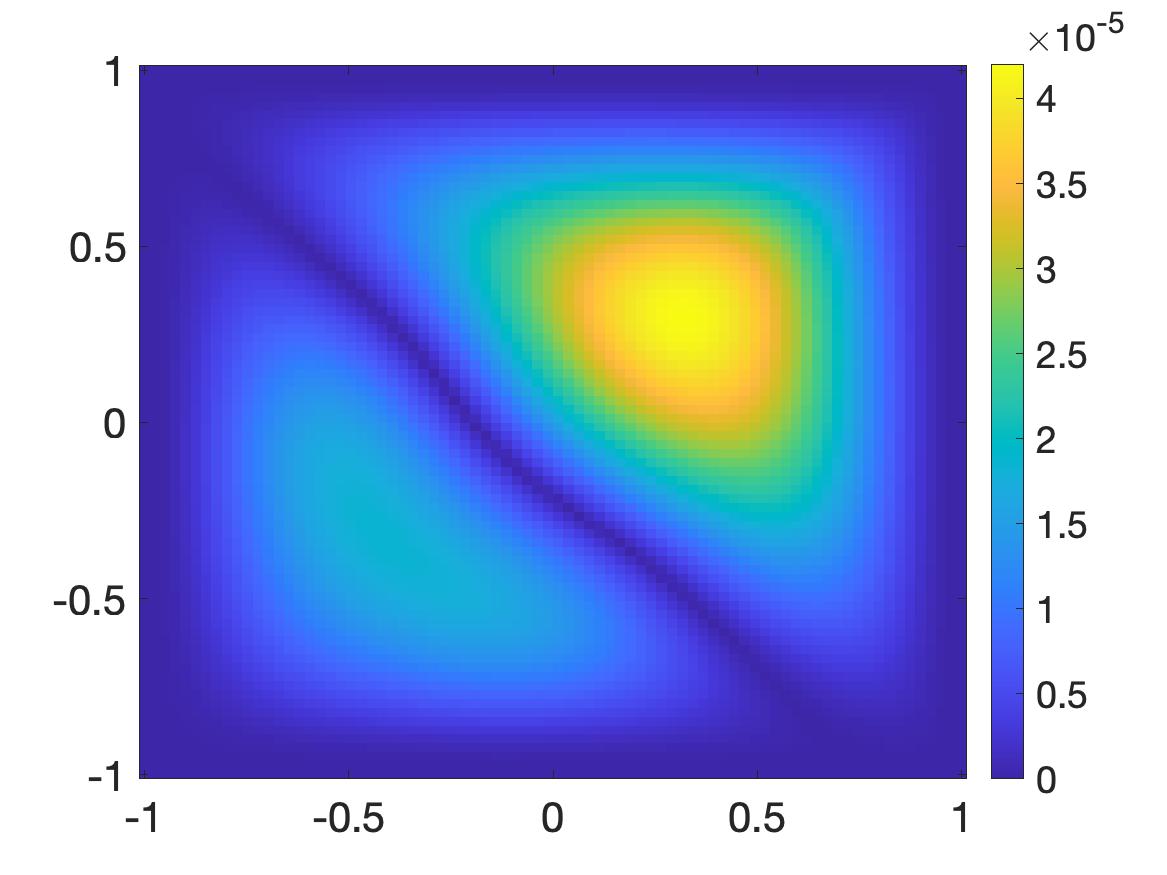}}
	\quad
	\subfloat[\label{2c} $\|u_n - u_{n - 1}\|_{L^{2}(\Omega)}$. The horizontal axis is the number of iteration $n$]{\includegraphics[width = .3\textwidth]{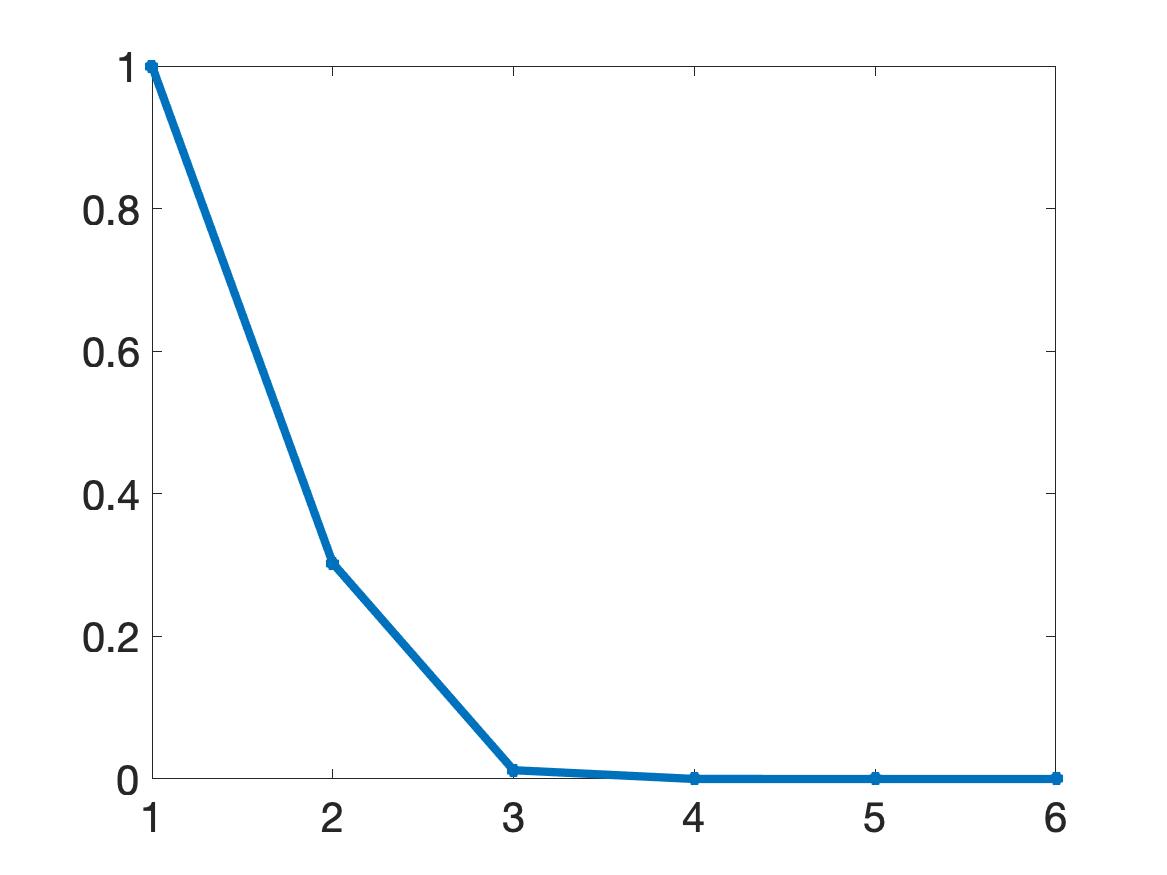}}
	\caption{\label{noHJ2} Test 2. The numerical solution to \eqref{main eqn} where $F$ is given in \eqref{F2noHJ} and the boundary data are given in \eqref{Cauchy2}.}
\end{figure}
It is evident from Figure \ref{noHJ2} that Algorithm \ref{alg 1} provides out of expectation solution for test 2. 
The relative error $\|u_{\rm true} - u_{\rm comp}\|_{L^{\infty}(\Omega)}/\|u_{\rm true}\|_{L^{\infty}(\Omega)} = 4.19 \times 10^{-5}$.
One can see from Figure \ref{2c} that Algorithm \ref{alg 1} converges at the fifth iteration.

\subsection{Application to first-order Hamilton-Jacobi equations}
Our aim in this subsection is to solve numerically 
\begin{equation}	 F(\x, u, \nabla u) = 0 \quad
	\mbox{for all } \x \in \Omega.
	\label{HJ without vis}
\end{equation}
with the boundary conditions 
\begin{equation}
	u|_{\partial \Omega} = f 
	\quad 
	\mbox{and}
	\quad \partial_{\nu} u|_{\partial \Omega} = g.
	\label{bdry}
\end{equation}
Basically, we use the vanishing viscosity process to approximate solutions to \eqref{HJ without vis}.
For $\epsilon>0$, consider
\begin{equation}
	-\epsilon \Delta u + F(\x, u, \nabla u) = 0 \quad
	\mbox{for all } \x \in \Omega
	\label{HJ}
\end{equation}
with boundary conditions \eqref{bdry}.
Again, \eqref{HJ}--\eqref{bdry} is an over-determined boundary value problem.
For $\epsilon>0$ sufficiently small, assume that \eqref{HJ}--\eqref{bdry} has a solution $u^\epsilon \in W$.
Then, $u^\epsilon$ approximates $u$, solution to \eqref{HJ without vis}--\eqref{bdry}, quite well under some appropriate assumptions on $F$.
In our numerical tests, we choose $\epsilon=\epsilon_0=10^{-3}$.

In this part, we provide six (6) numerical tests, in which we compute the viscosity solution to some Hamilton-Jacobi equations of the form \eqref{HJ without vis} given Cauchy boundary data. 
That means, by applying Algorithm \ref{alg 1}, we numerically find a function $u_{\rm comp}$ satisfying \eqref{HJ}-\eqref{bdry} when $F$, $f$ and $g$ are given.
The verification that $u^*$ is the correct viscosity solution can be done in a similar manner as that in \cite{Tran19, KlibanovNguyenTran:JCP2022}.

\medskip

{\it Test 1.}
In this test, we solve \eqref{HJ}-\eqref{bdry} when 
\begin{equation}
	F(\x, s, \p) = s + |\p| + |x| - 1. 
	\label{F1}
\end{equation}
for all $\x = (x, y) \in \Omega, s \in \R, \p \in \R^2$.
The boundary conditions are given by
\begin{equation}
	u(\x) = -|x|,
	\quad 
	\partial_\nu u(\x) =\langle -\mbox{sign}(x), 0\rangle \cdot \nu
\label{boundary1}
\end{equation} 
for all $\x = (x, y) \in \partial \Omega$.
In this case, the true solution is $u^* = -|x|.$ 
The numerical result is displayed in Figure \ref{fig 1}.

\begin{figure}[h!]
	\subfloat[The true solution $u^*$ to the Hamilton-Jacobi equation where the Hamiltonian is given in \eqref{F1}.]{\includegraphics[width=.3\textwidth]{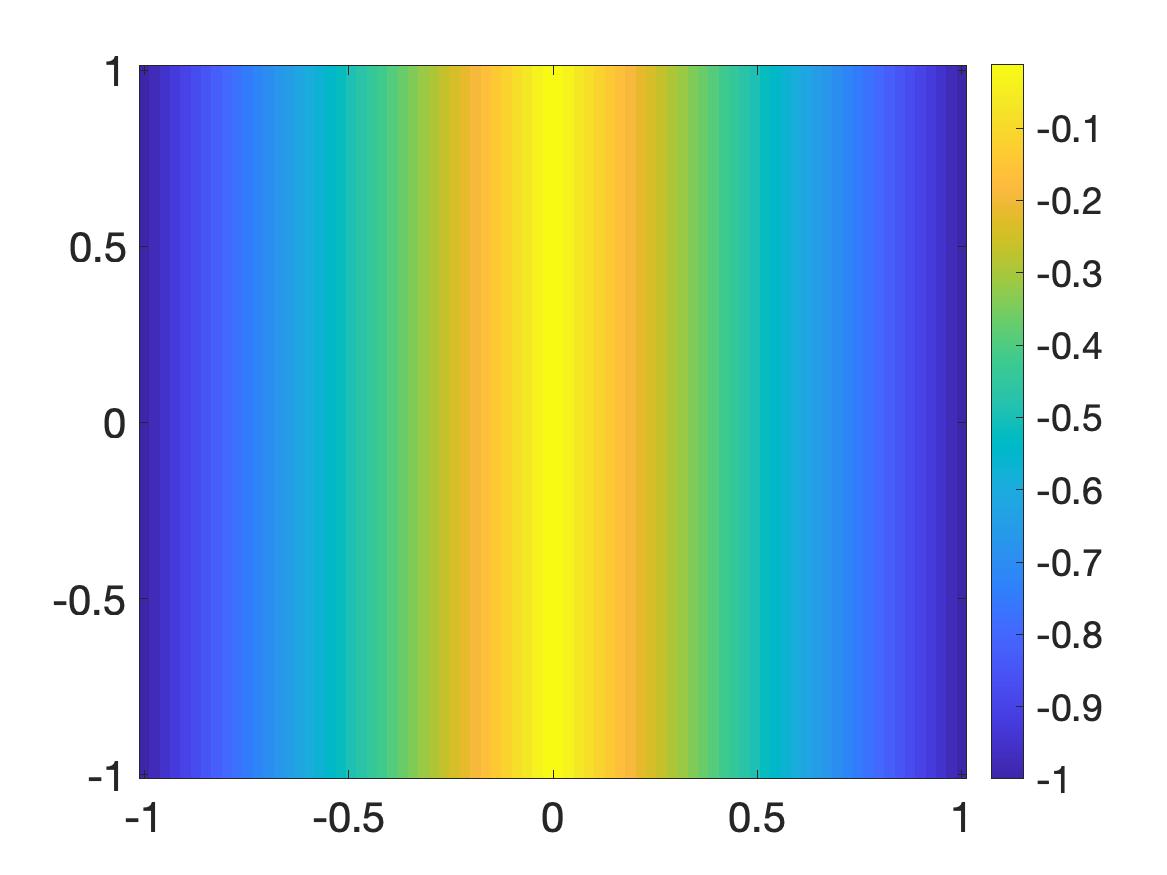}}
	\quad
	\subfloat[The initial solution $u_0$ computed by minimizing $J_0^{\lambda, \beta, \eta}$ defined in \eqref{J0}]{\includegraphics[width=.3\textwidth]{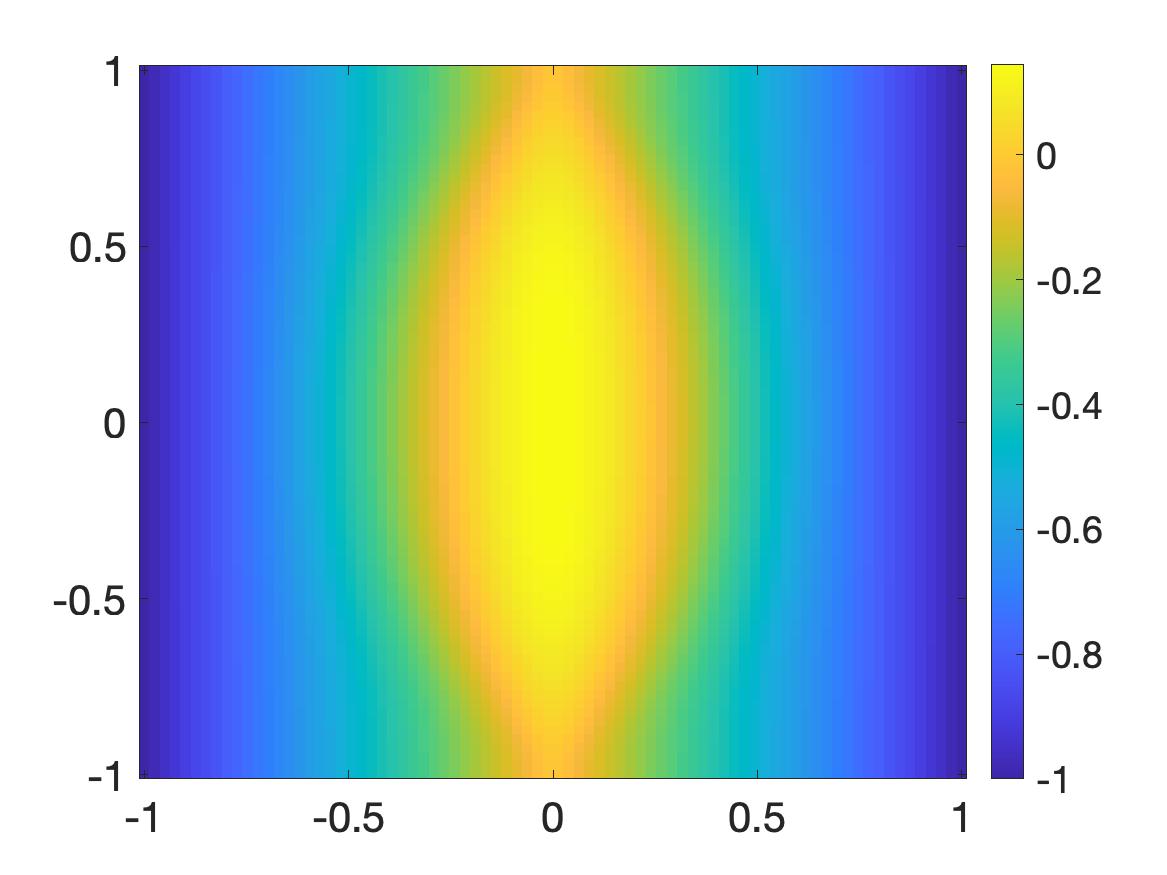}}
	\quad
	\subfloat[The computed solution $u_{\rm comp}$.]{\includegraphics[width=.3\textwidth]{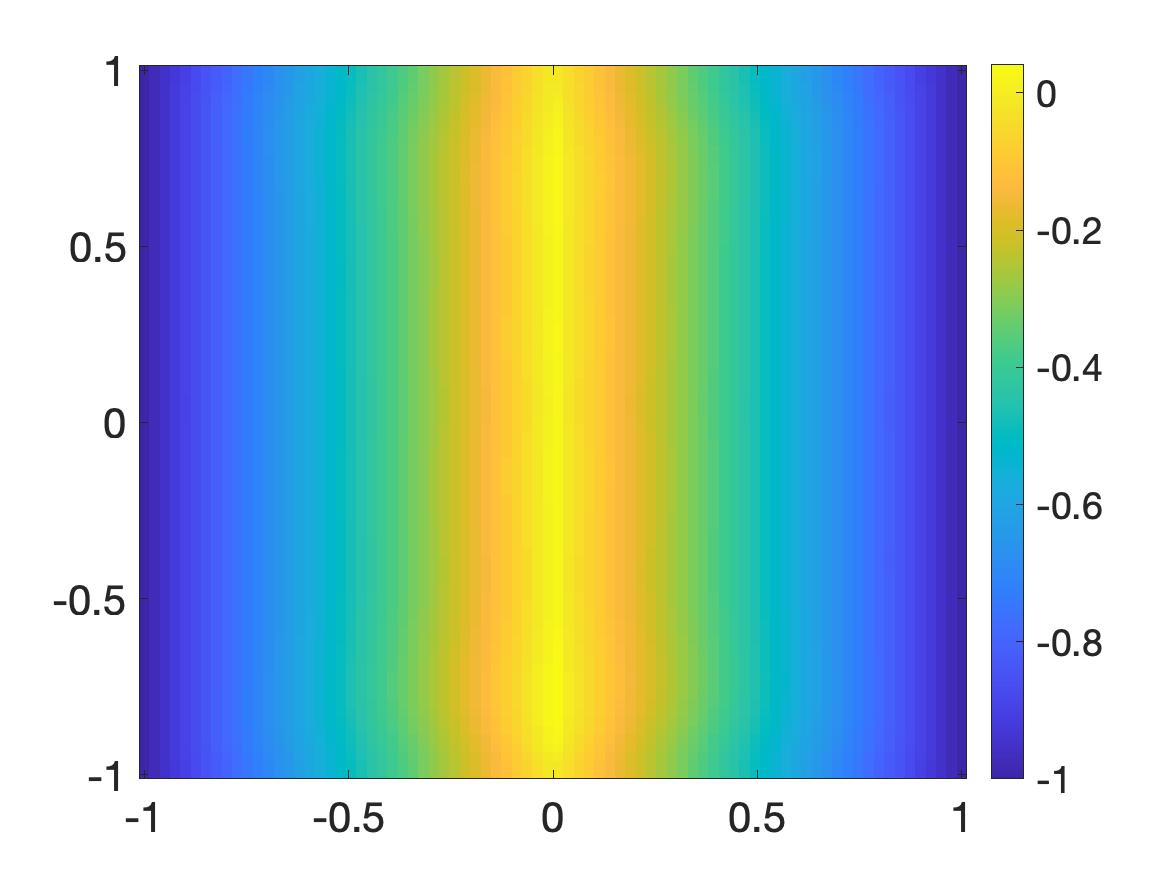}}
	
	\subfloat[$\|u_{n} - u_{n - 1}\|_{L^2(\Omega)}$. The horizontal axis is the number of iteration $n$.]{\includegraphics[width=.3\textwidth]{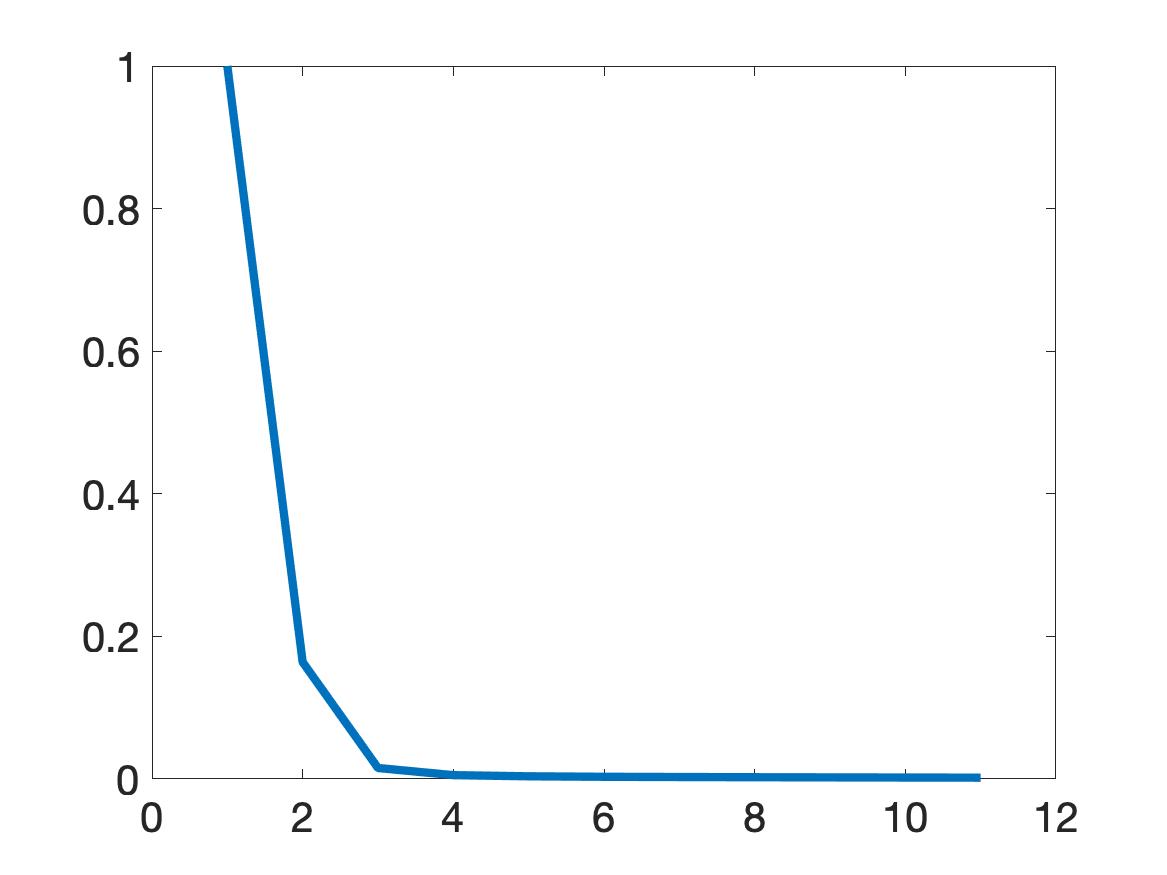}}
	\quad
	\subfloat[The true and computed solutions on the line $\{(x = 0 , y) \in \Omega\}$]{\includegraphics[width=.3\textwidth]{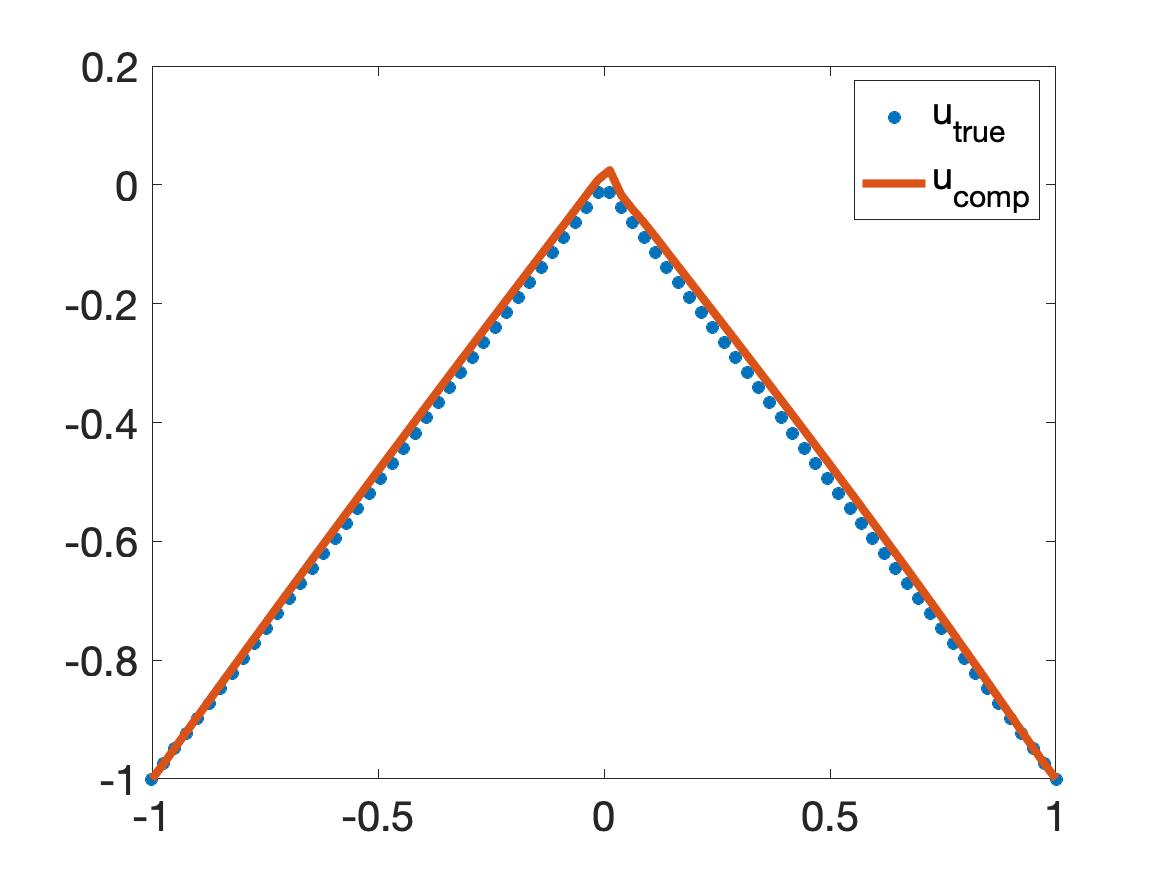}}
	\quad
	\subfloat[ The relative error $\frac{|u^*(\x) - u_{\rm comp}(\x)|}{\|u_{\rm true}\|_{L^{\infty}(\Omega)}}.$]{\includegraphics[width = .3\textwidth]{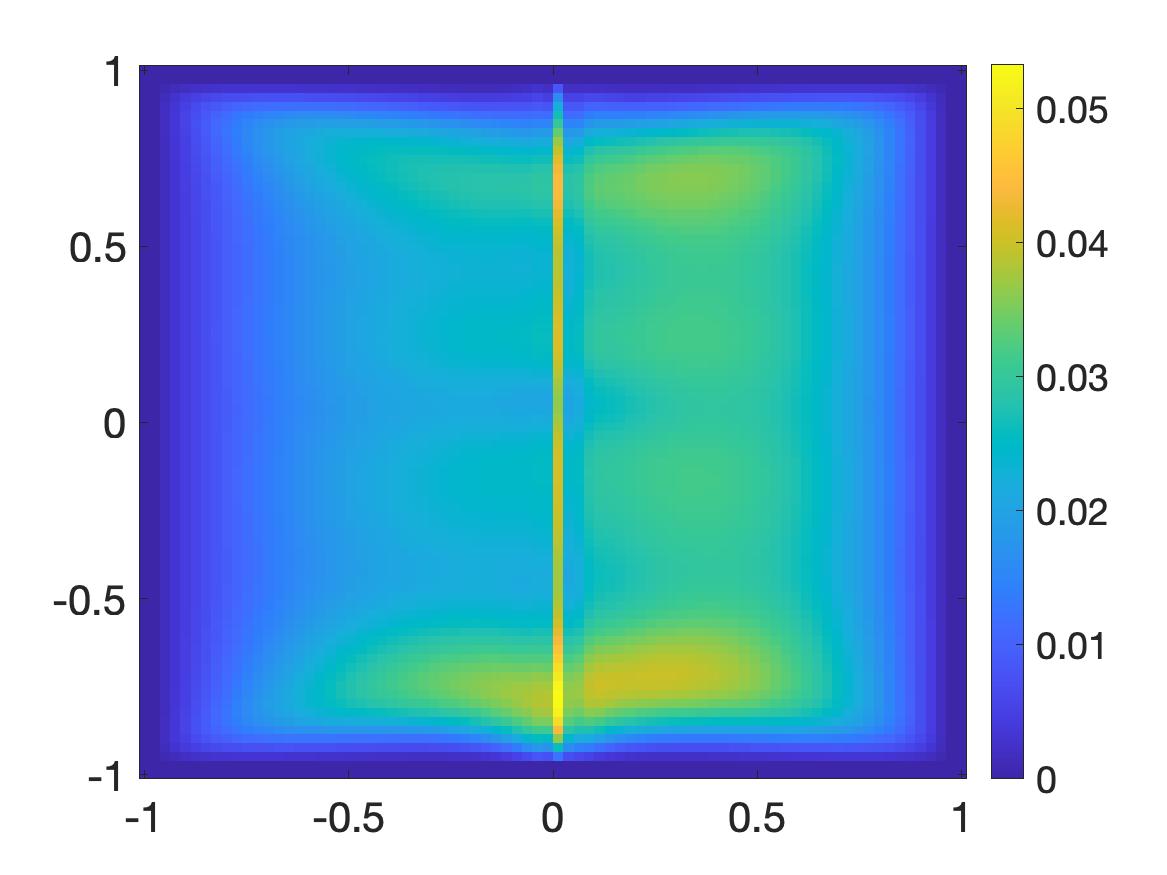}}
	\caption{\label{fig 1} Test 1. The true viscosity solution to \eqref{HJ}-\eqref{bdry} and the computed one. The Hamiltonian and the boundary conditions are given in \eqref{F1}-\eqref{boundary1}.}
\end{figure}

It is evident from Figure \ref{fig 1} that
we successfully compute the  solution $u_{\rm comp}$. 
The procedure described in Algorithm \ref{alg 1} converges after four iterations. 
The relative error $\frac{\|u^* - u_{\rm comp}\|_{L^\infty(\Omega)}}{\|u^*\|_{L^\infty(\Omega)}} = 5.33\%.$

\medskip

{\it Test 2.} We find the viscosity solution to the {\it eikonal} equation. 
In this test, we solve \eqref{HJ}-\eqref{bdry} when the Hamiltonian is 
\begin{equation}
	F(\x, s, \p) = |\p|^2 - (1 + (1 + \mbox{sign}(x + y))^2)
	\label{F2}
\end{equation}
for all $\x = (x, y) \in \Omega, s \in \R, \p \in \R^2$.
The boundary conditions are given by
\begin{equation}
	u(\x) = -|x + y| - y,
	 \quad  \partial_\nu u(\x) =( -\mbox{sign}(x+ y), -\mbox{sign}(x+ y) - 1 ) \cdot \nu
	 \label{boundary2}
\end{equation}
 for all $\x = (x, y) \in \partial \Omega$.
 The true solution is $u^*(\x) = -|x + y| - y$.
 The graphs of $u^*$ and $u_{\rm comp}$ are displayed in Figure \ref{fig 2}.
 \begin{figure}[h!]
	\subfloat[The true solution $u^*$ to the Hamilton-Jacobi equation where the Hamiltonian is given in \eqref{F2}.]{\includegraphics[width=.3\textwidth]{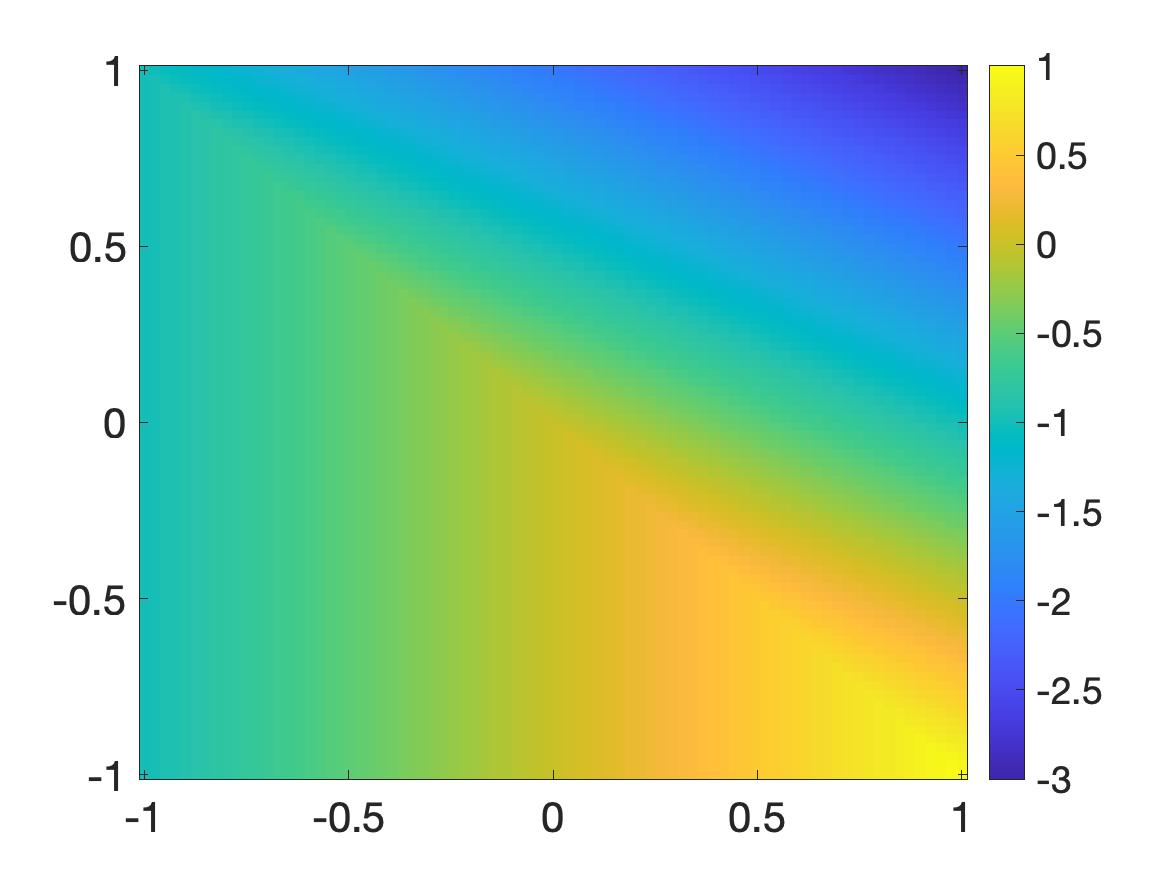}}
	\quad
	\subfloat[The initial solution $u_0$ computed by minimizing $J_0^{\lambda, \beta, \eta}$ defined in \eqref{J0}]{\includegraphics[width=.3\textwidth]{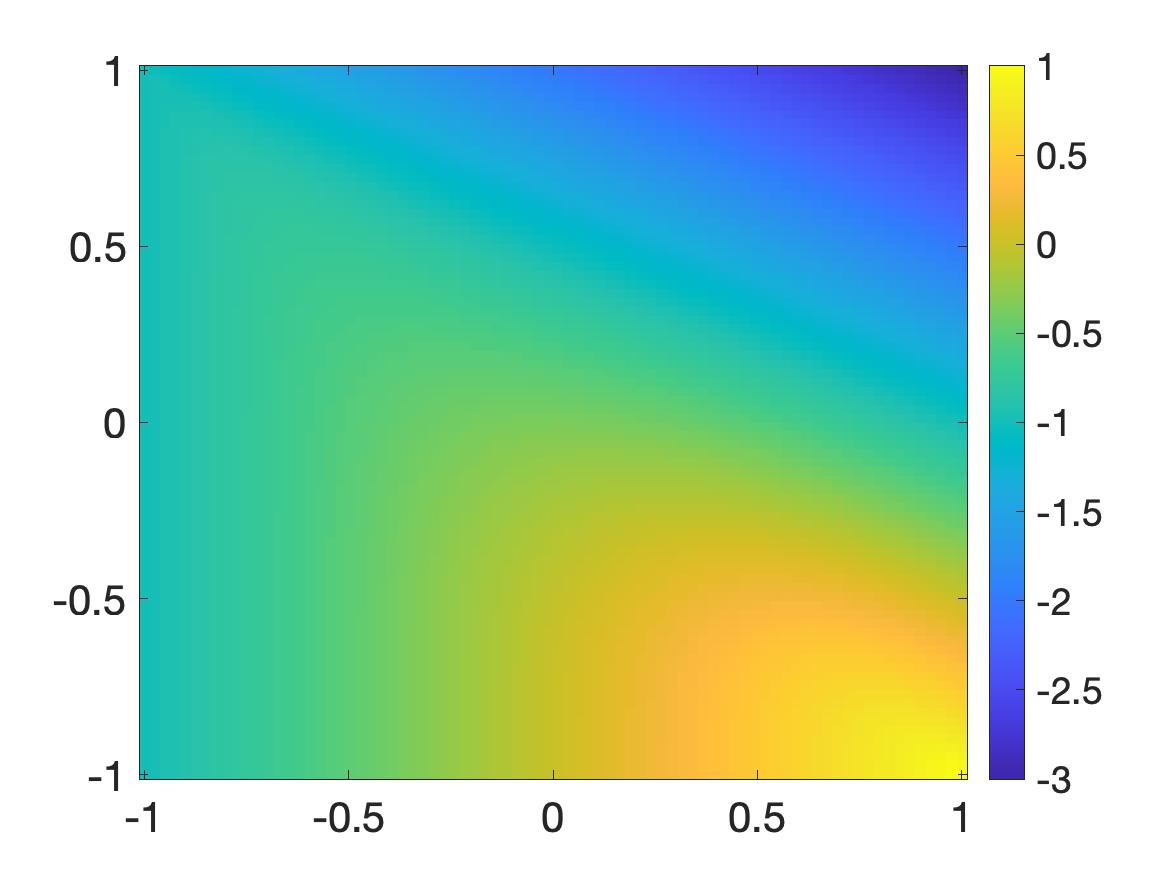}}
	\quad
	\subfloat[The computed solution $u_{\rm comp}$.]{\includegraphics[width=.3\textwidth]{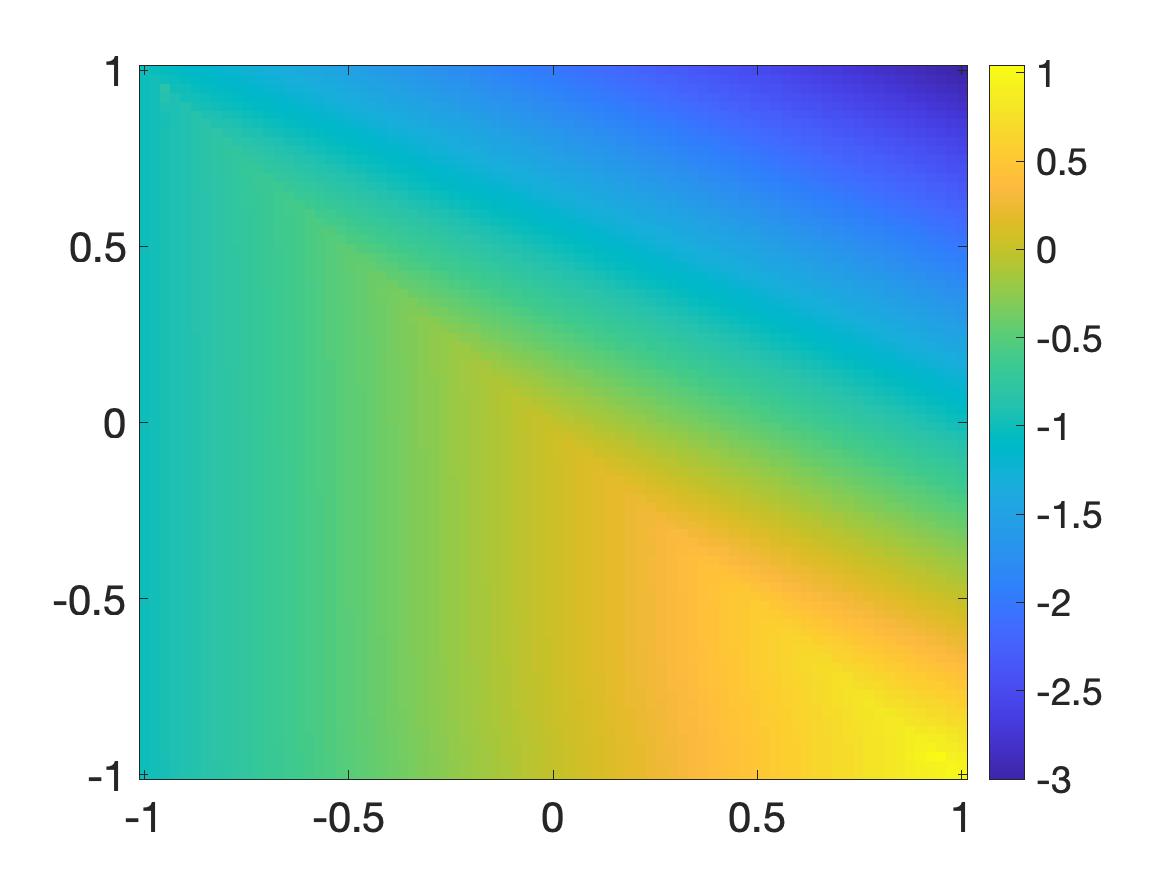}}

	\subfloat[$\|u_{n} - u_{n - 1}\|_{L^2(\Omega)}$. The horizontal axis is the number of iteration $n$.]{\includegraphics[width=.3\textwidth]{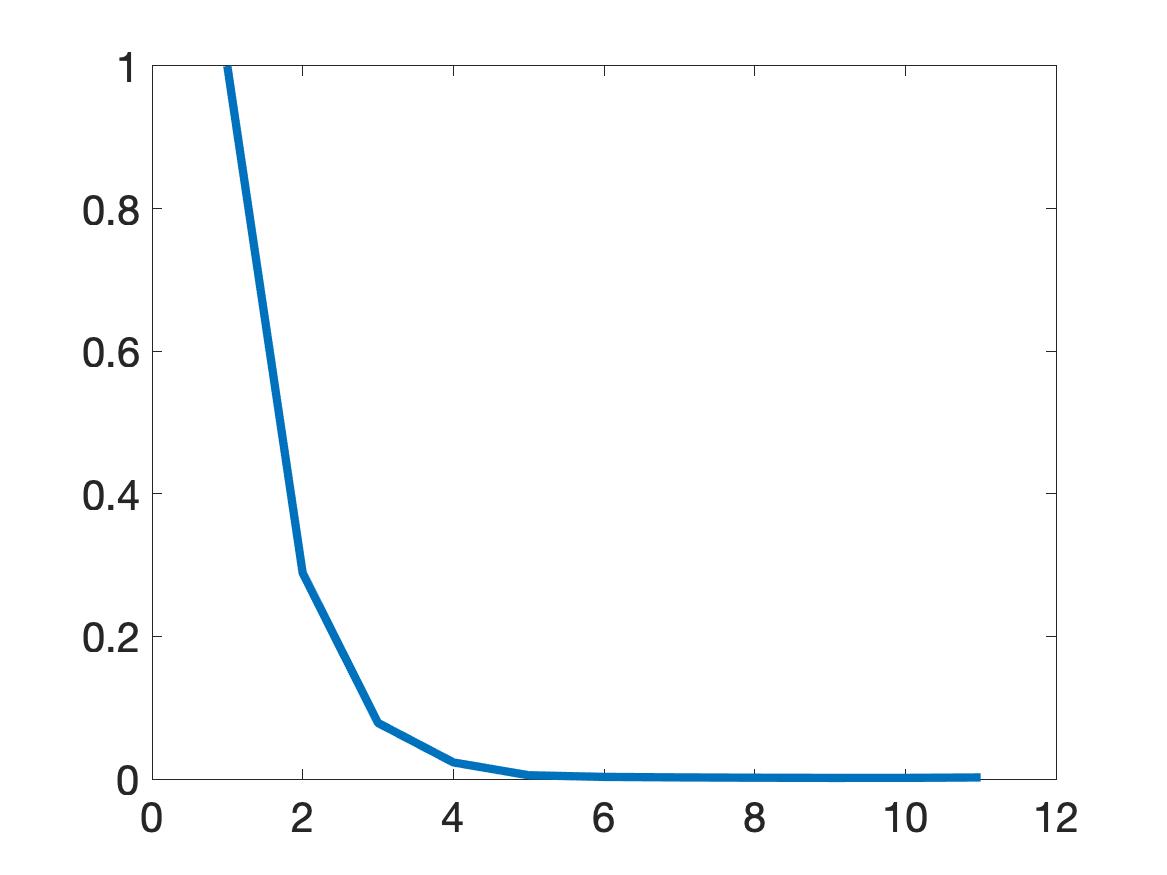}}
	\quad
	\subfloat[The true and computed solutions on the line $\{(x = 0.5 , y) \in \Omega\}$]{\includegraphics[width=.3\textwidth]{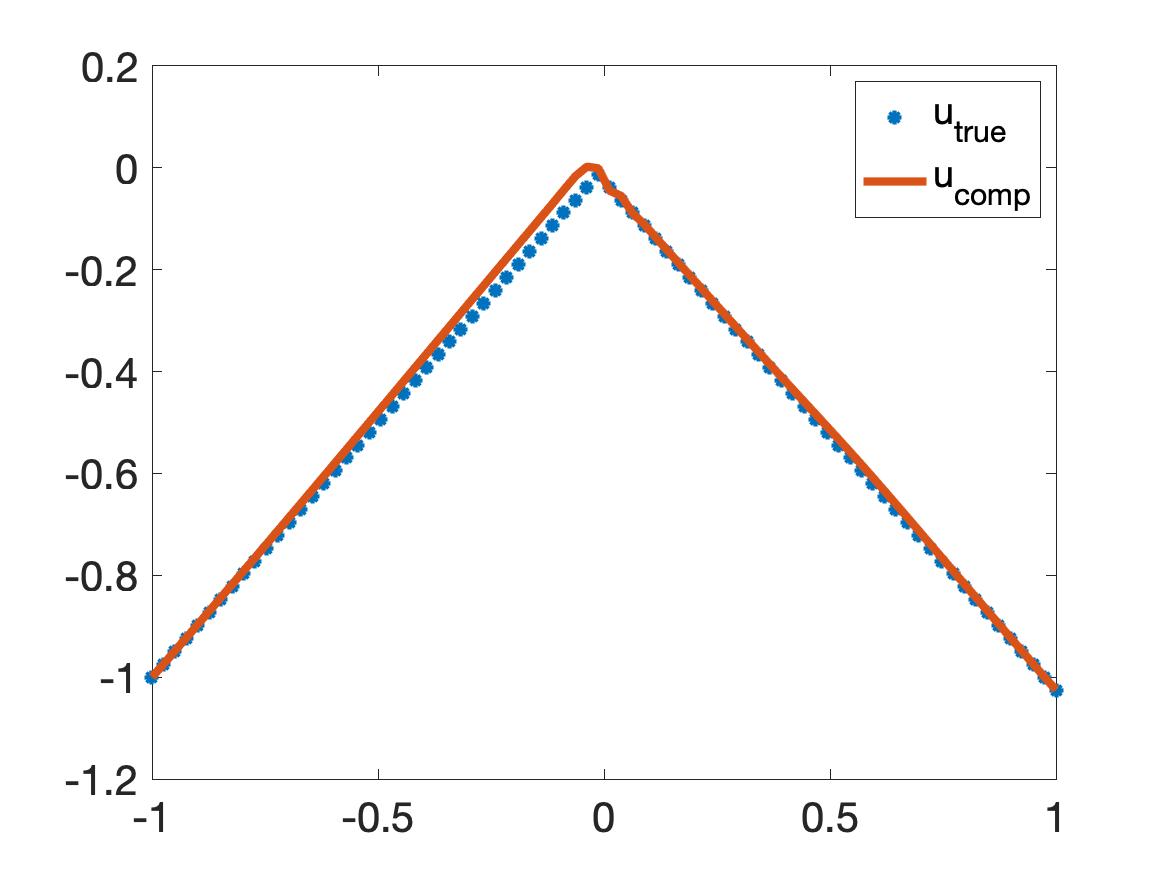}}
	\quad
	\subfloat[ The relative error $\frac{|u^*(\x) - u_{\rm comp}(\x)|}{\|u_{\rm true}\|_{L^{\infty}(\Omega)}}.$]{\includegraphics[width = .3\textwidth]{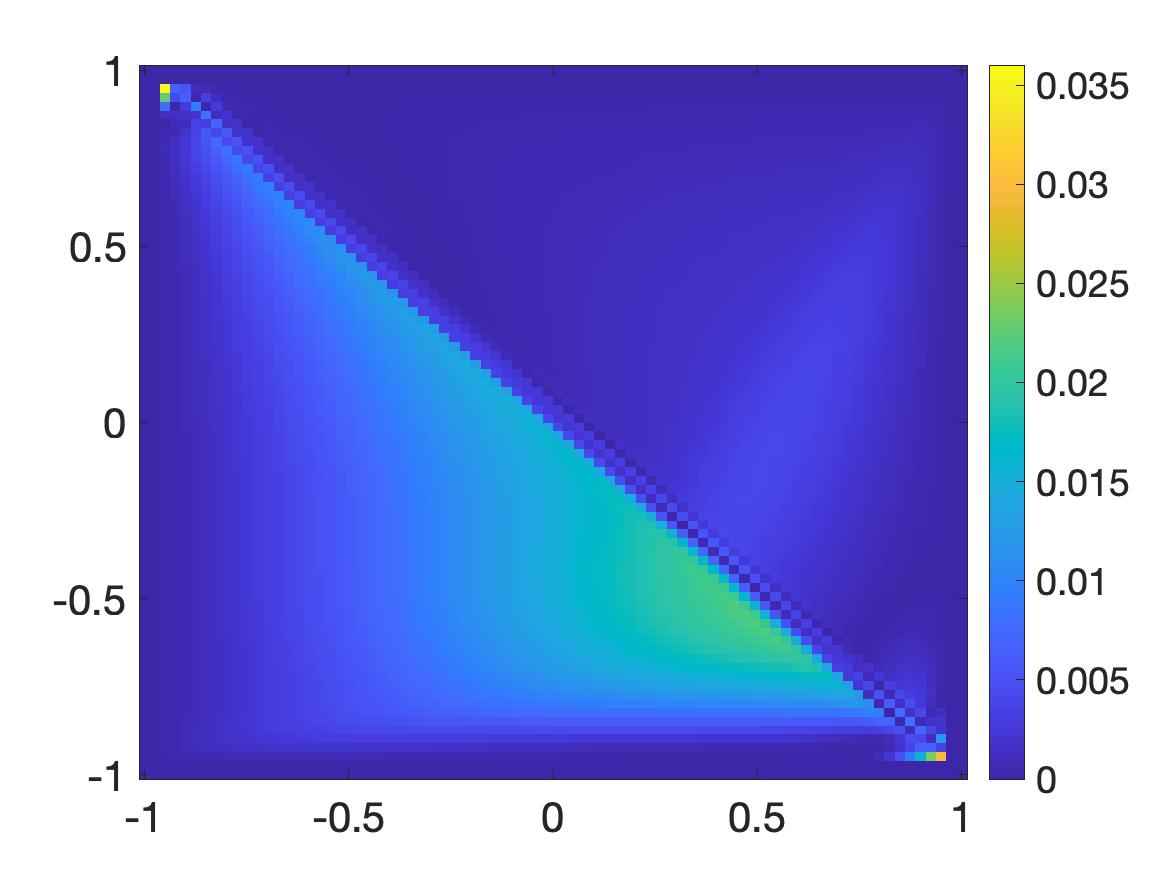}}
	\caption{\label{fig 2} Test 2. The true viscosity solution to \eqref{HJ}-\eqref{bdry} and the computed one. The Hamiltonian and the boundary conditions are given in \eqref{F2}-\eqref{boundary2}.}
\end{figure}
The relative error $\frac{\|u^* - u_{\rm comp}\|_{L^\infty(\Omega)}}{\|u^*\|_{L^\infty(\Omega)}} = 3.6\%.$
 
 \medskip
 
{\it Test 3.} We test the case when the Hamiltonian is not convex with respect  its third variable. 
The Hamiltonian in this test is given by
\begin{multline}
	F(\x, s, \p) = 20 s + |p_1| - |p_2|
	- \Big[
		20(-|x + 0.5| + e^{\cos(2\pi(x^2 + y^2))})
		\\ 
		+ |\mbox{sign}(x + 0.5) + 4\pi x\sin(2\pi(x^2 + y^2))e^{\cos(2\pi(x^2 + y^2))}|
		\\
		- |4\pi y\sin(2\pi(x^2 + y^2))e^{\cos(2\pi(x^2 + y^2))}|
	\Big]
	\label{F3}
\end{multline}
for all $\x = (x, y) \in \Omega, s \in \R, \p = (p_1, p_2) \in \R^2$.
The boundary conditions are given by
\begin{equation}
	u(\x) = -|x + 0.5| + e^{\cos(2\pi(x^2 + y^2))}
	\quad \mbox{for all } \x = (x, y) \in \partial \Omega
	\label{boundary31}
\end{equation}
and
\begin{multline}
	 \partial_\nu u(\x) = \big(-\mbox{sign}(x + 0.5) -4\pi x\sin(2\pi(x^2 + y^2))e^{\cos(2\pi(x^2 + y^2))},
	 \\ -4\pi y\sin(2\pi(x^2 + y^2))e^{\cos(2\pi(x^2 + y^2))}\big) \cdot \nu
	 \label{boundary32}
\end{multline}
 for all $\x = (x, y) \in \partial \Omega$.
 The true solution is $u^*(\x) = -|x + 0.5| + e^{\cos(2\pi(x^2 + y^2))}$.
The graphs of $u^*$ and $u_{\rm comp}$ are displayed in Figure \ref{fig 3}.
 \begin{figure}[h!]
	\subfloat[The true solution $u^*$ to the Hamilton-Jacobi equation where the Hamiltonian is given in \eqref{F3}.]{\includegraphics[width=.3\textwidth]{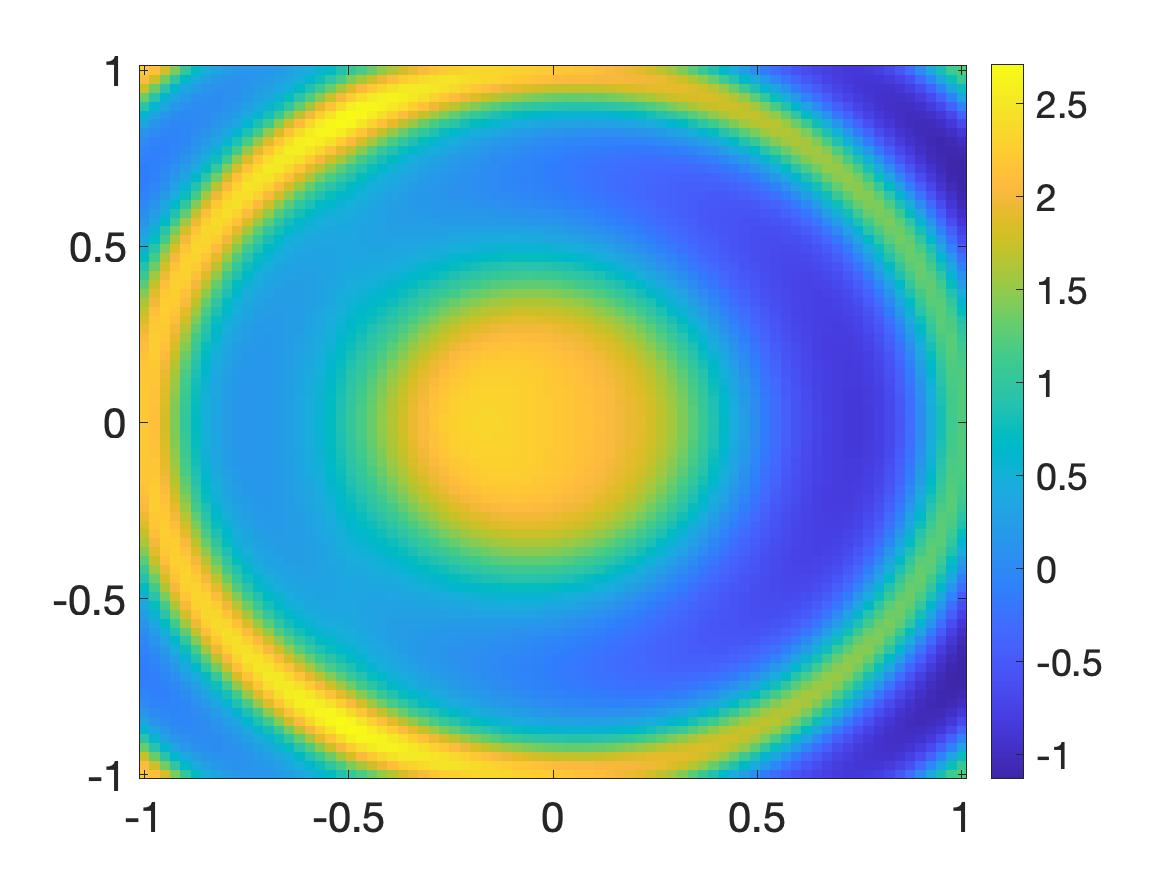}}
	\quad
	\subfloat[The initial solution $u_0$ computed by minimizing $J_0^{\lambda, \beta, \eta}$ defined in \eqref{J0}]{\includegraphics[width=.3\textwidth]{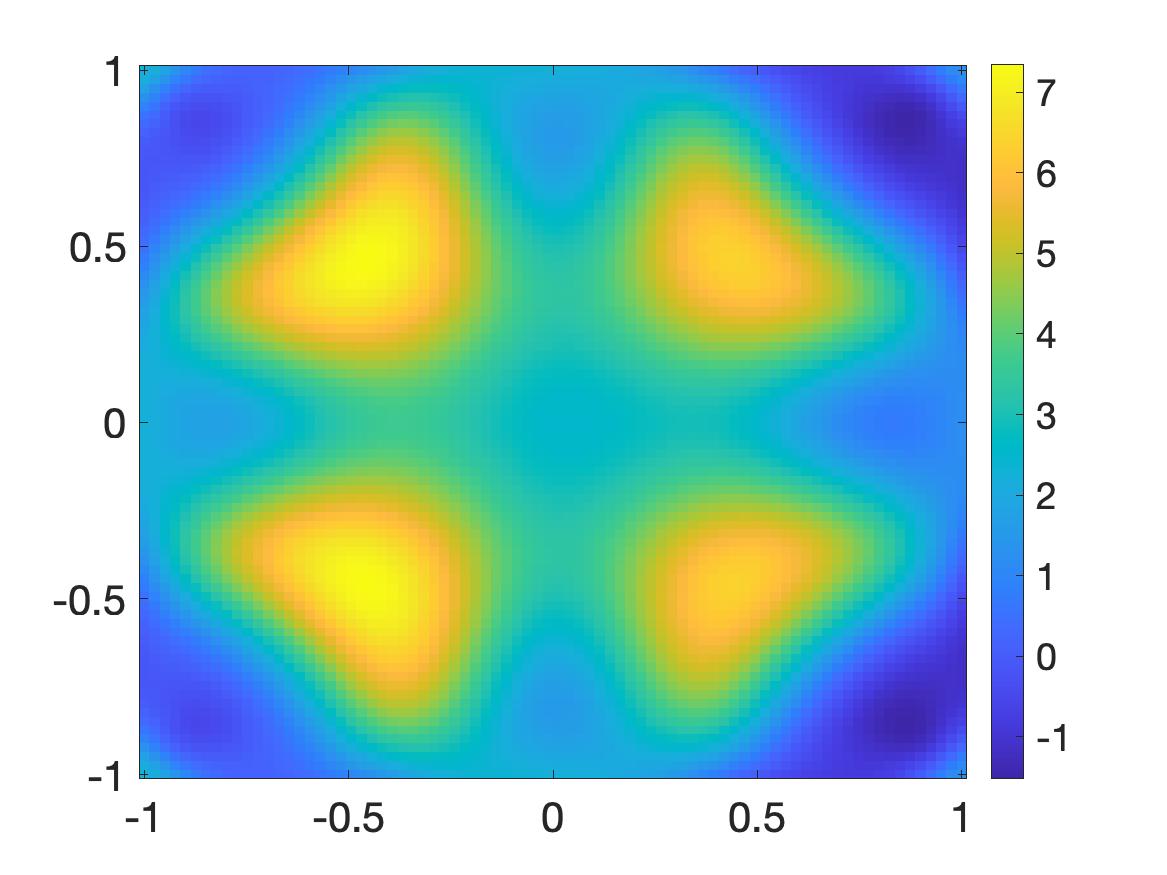}}
	\quad
	\subfloat[The computed solution $u_{\rm comp}$.]{\includegraphics[width=.3\textwidth]{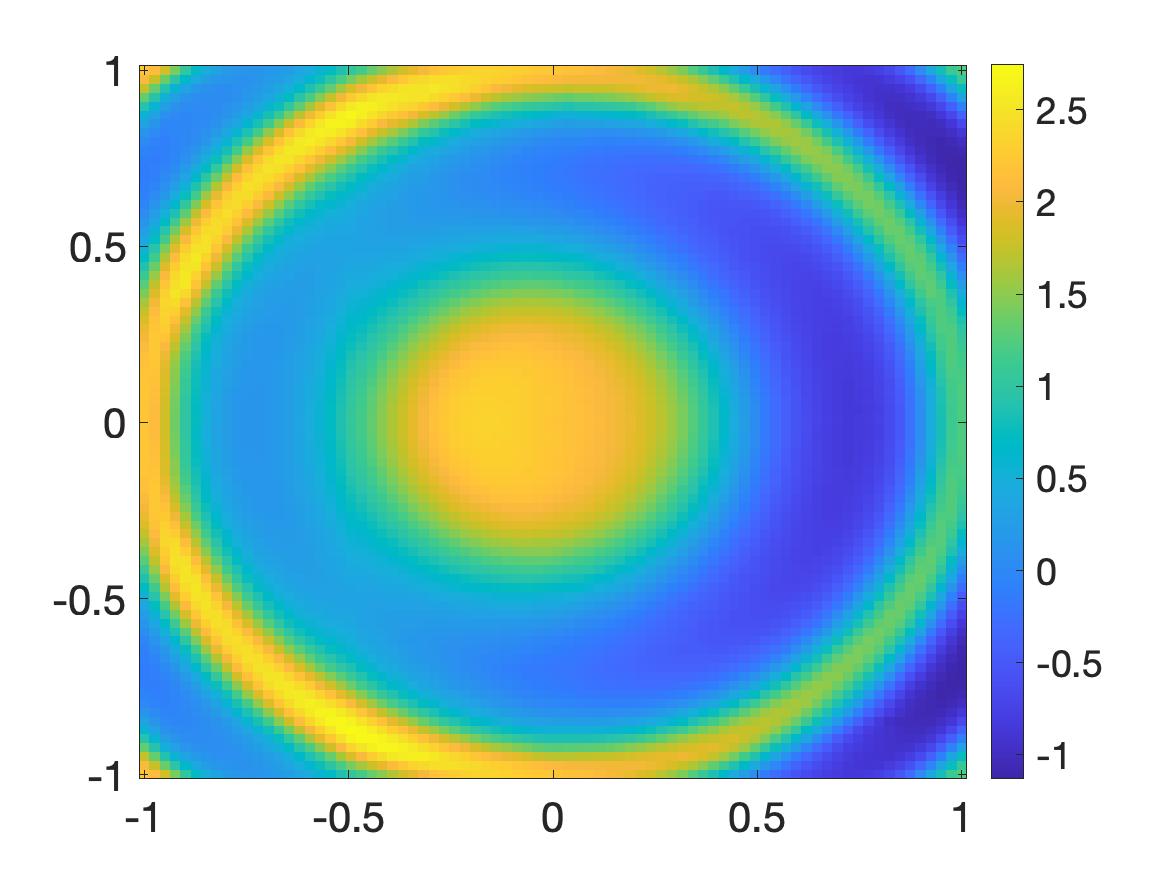}}

	\subfloat[$\|u_{n} - u_{n - 1}\|_{L^2(\Omega)}$. The horizontal axis is the number of iteration $n$.]{\includegraphics[width=.3\textwidth]{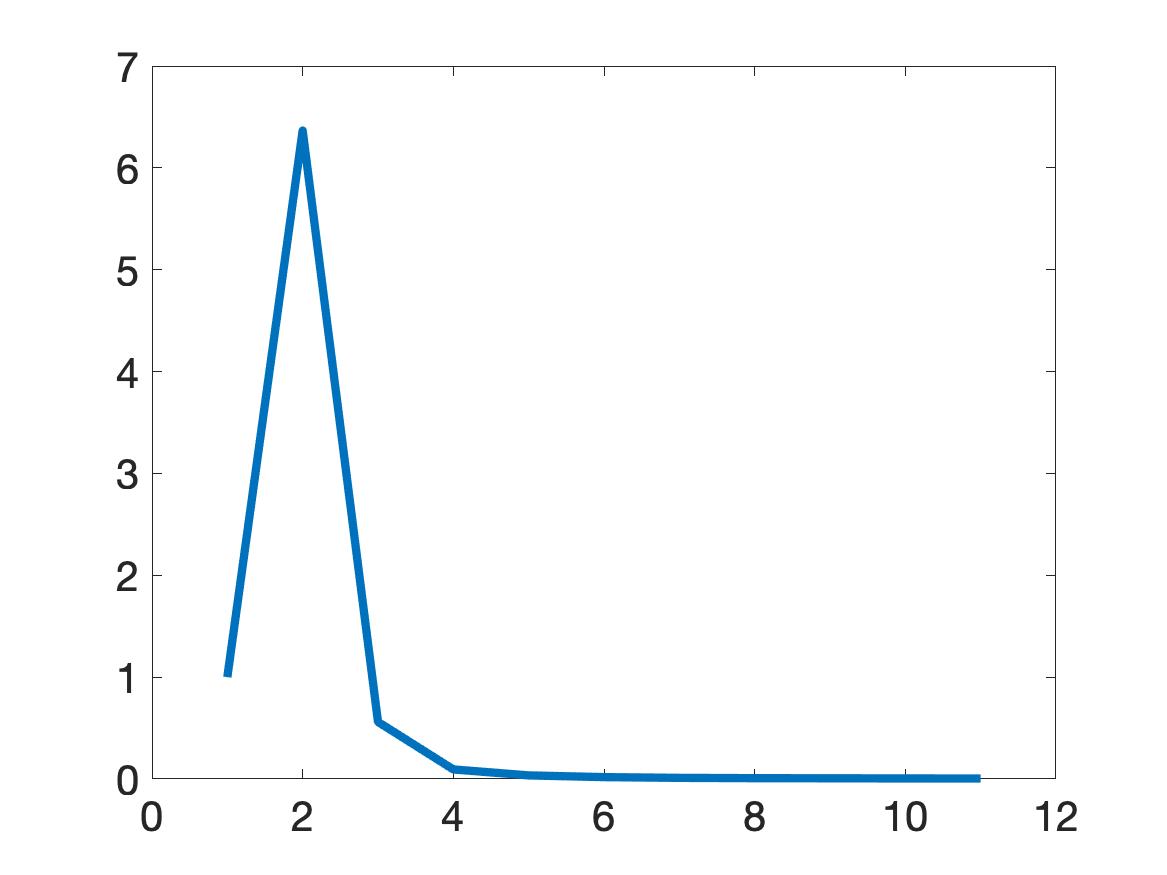}}
	\quad
	\subfloat[The true and computed solutions on the line $\{(x = 0 , y) \in \Omega\}$]{\includegraphics[width=.3\textwidth]{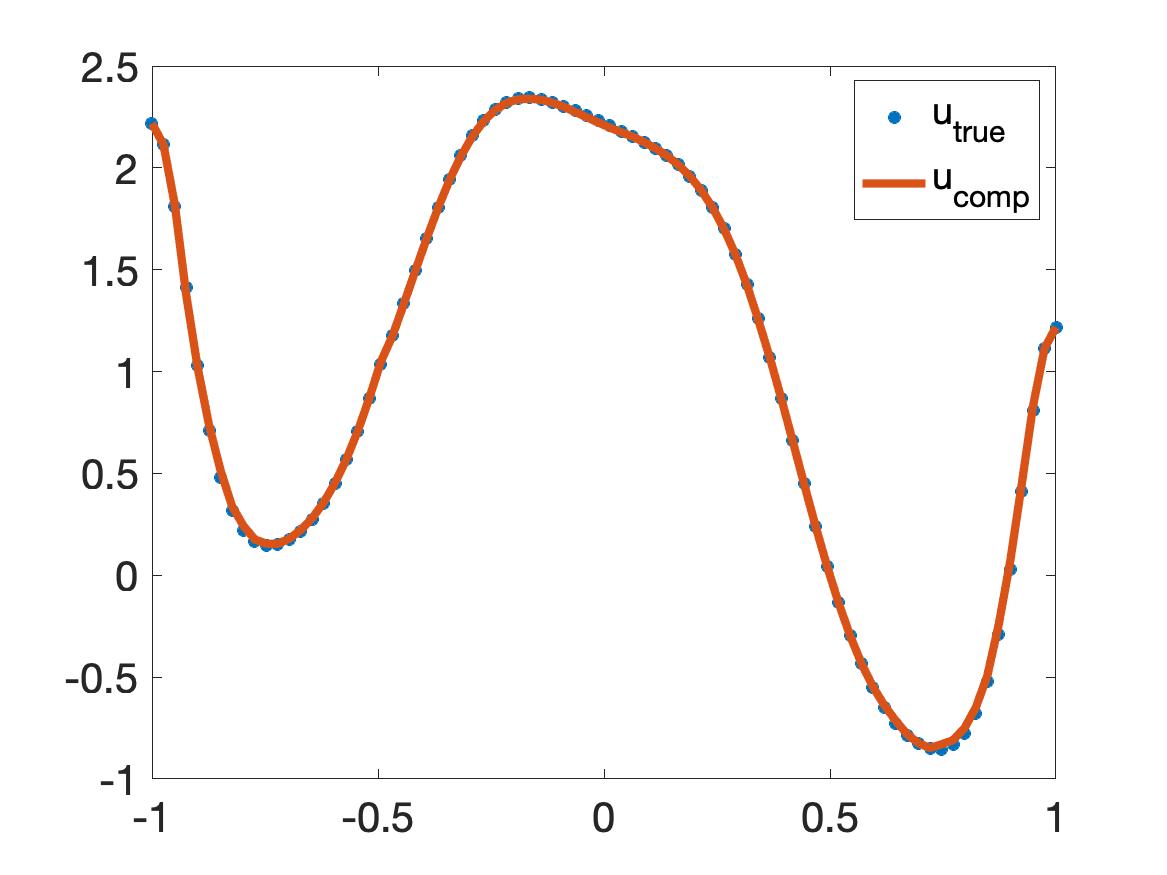}}
	\quad
	\quad
	\subfloat[\label{fig 3f} The relative error $\frac{|u^*(\x) - u_{\rm comp}(\x)|}{\|u_{\rm true}\|_{L^{\infty}(\Omega)}}.$]{\includegraphics[width = .3\textwidth]{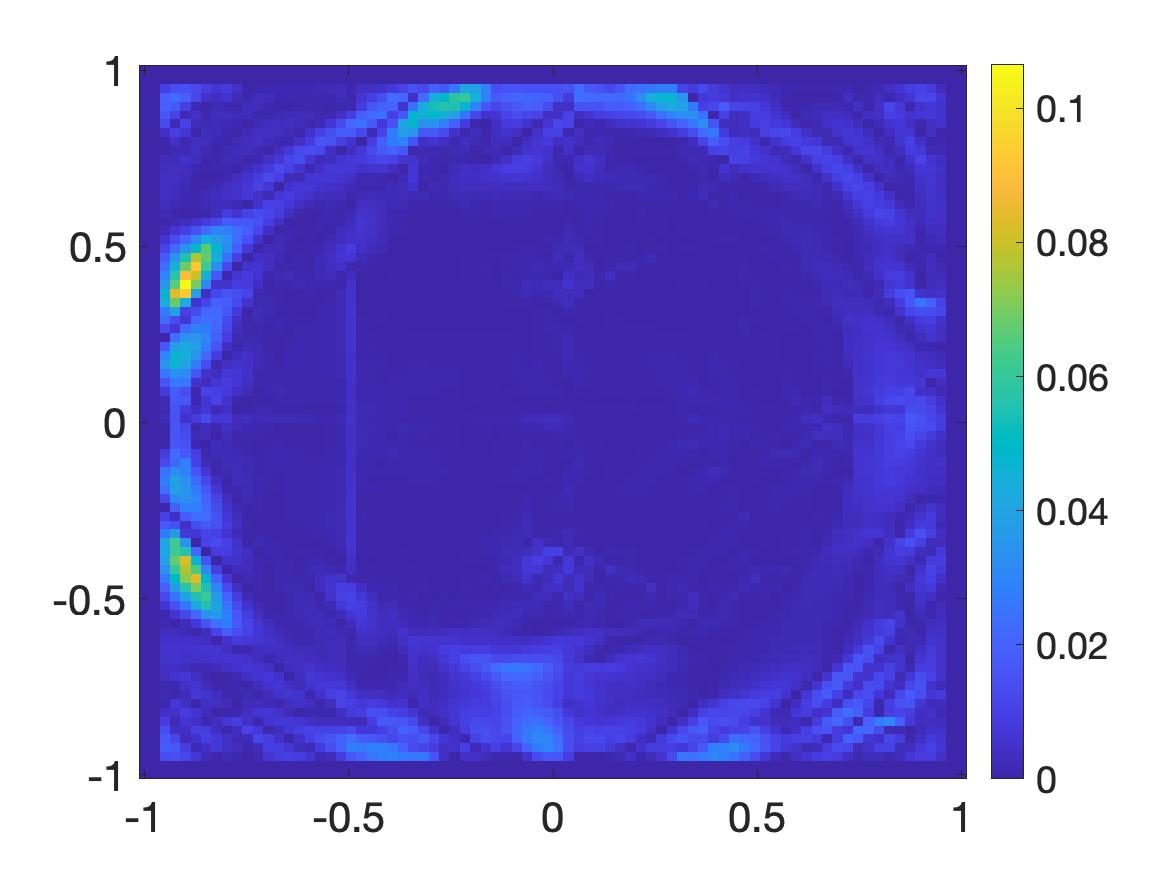}}
	
	\caption{\label{fig 3} Test 3. The true viscosity solution to \eqref{HJ}-\eqref{bdry} and the computed one. The Hamiltonian and the boundary conditions are given in \eqref{F3}-\eqref{boundary32}.}
\end{figure}
The relative error $ \frac{\|u^* - u_{\rm comp}\|_{L^\infty(\Omega)}}{\|u^*\|_{L^\infty(\Omega)}} = 10.65\%.$
Although the relative error in this test is large, the numerical result is acceptable. In fact, we observe from Figure \ref{fig 3f} that $ \frac{\|u^* - u_{\rm comp}\|_{L^\infty(\Omega)}}{\|u^*\|_{L^\infty(\Omega)}}$ is small almost everywhere  while it is large at only two small places near the left edge of the domain. 

 \medskip

{\it Test 4.} We test the case when the Hamiltonian is not increasing in the second variable and is not convex in the third variable. 
The Hamiltonian in this test is given by
\begin{multline}
	F(\x, s, \p) = -40s + ||\p| - 10|  
	+ 40\Big(|x + y - 0.5| + \sin\Big(\frac{x^2}{2} + y^2\Big)\Big) 
		\\-
		\Big|\Big(\Big[\mbox{sign}(x + y -  0.5) + x\cos\Big(\frac{x^2}{2} + y^2\Big)\Big]^2 +\\
		 \Big[
		\mbox{sign}(x + y -  0.5) + 2y\cos\Big(\frac{x^2}{2} + y^2\Big)
		\Big]^2\Big)^{1/2}
		- 10\Big|
	\label{F4}
\end{multline}
for all $\x = (x, y) \in \Omega, s \in \R, \p = (p_1, p_2) \in \R^2$.
The boundary conditions are given by
\begin{equation}
	u(\x) = |x + y - 0.5| + \sin\Big(\frac{x^2}{2} + y^2\Big)
	\quad \mbox{for all } \x = (x, y) \in \partial \Omega
	\label{boundary41}
\end{equation}
and
\begin{multline}
	 \partial_\nu u(\x) 	 = \Big(\mbox{sign}(x + y -  0.5) + x\cos\Big(\frac{x^2}{2} + y^2\Big), \\
	 \mbox{sign}(x + y -  0.5) + 2y\cos\Big(\frac{x^2}{2} + y^2\Big)\Big) \cdot \nu
	 \label{boundary42}
\end{multline}
 for all $\x = (x, y) \in \partial \Omega$.
 The true solution is $u^*(\x) = |x + y - 0.5| + \sin\Big(\frac{x^2}{2} + y^2\Big)$.
The graphs of $u^*$ and $u_{\rm comp}$ are displayed in Figure \ref{fig 4}.
 \begin{figure}[h!]
	\subfloat[The true solution $u^*$ to the Hamilton-Jacobi equation where the Hamiltonian is given in \eqref{F4}.]{\includegraphics[width=.3\textwidth]{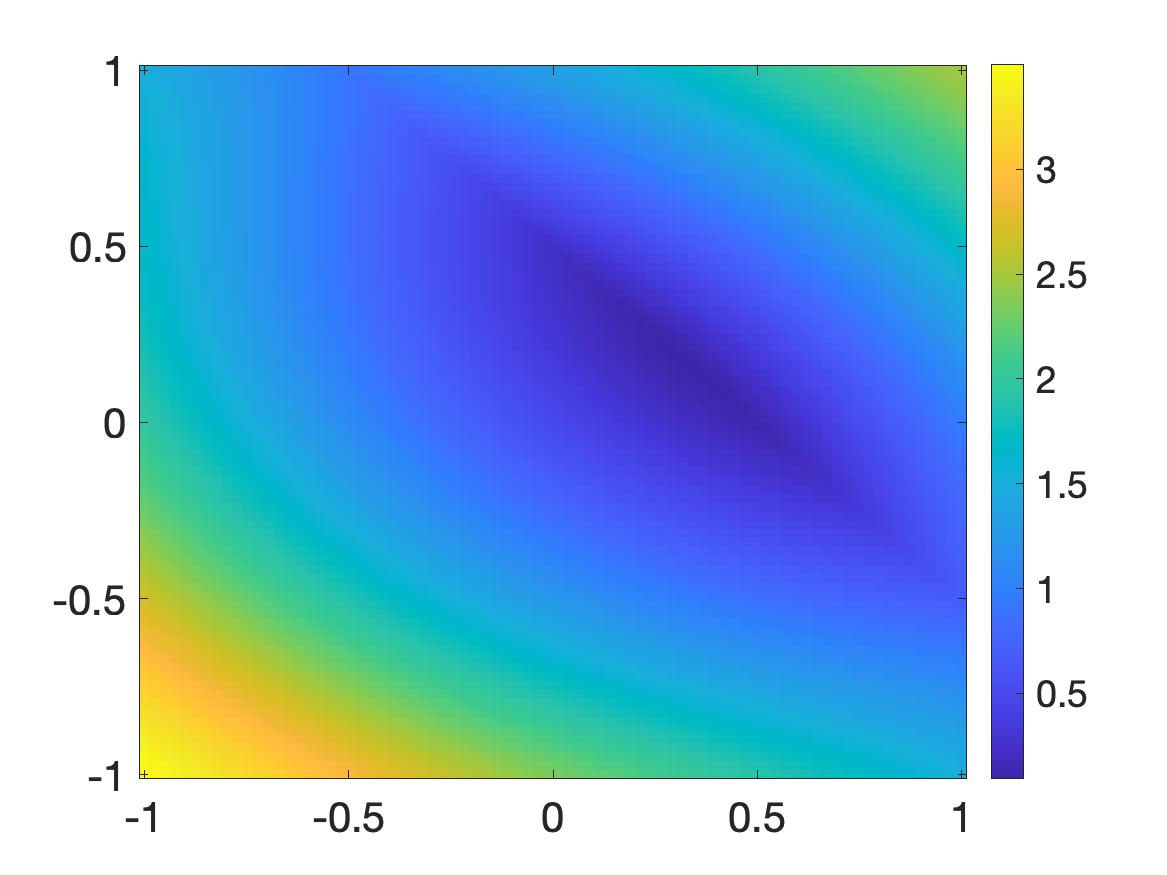}}
	\quad
	\subfloat[The initial solution $u_0$ computed by minimizing $J_0^{\lambda, \beta, \eta}$ defined in \eqref{J0}]{\includegraphics[width=.3\textwidth]{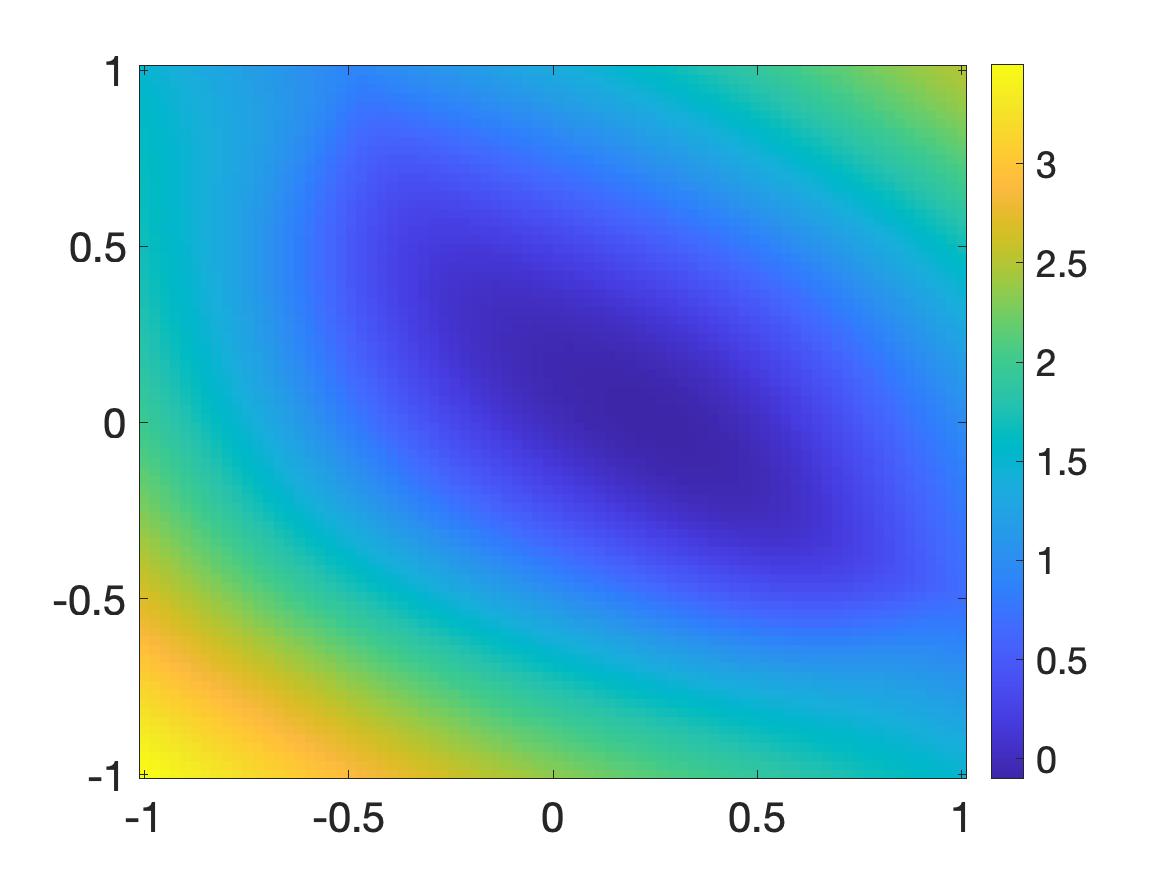}}
	\quad
	\subfloat[The computed solution $u_{\rm comp}$.]{\includegraphics[width=.3\textwidth]{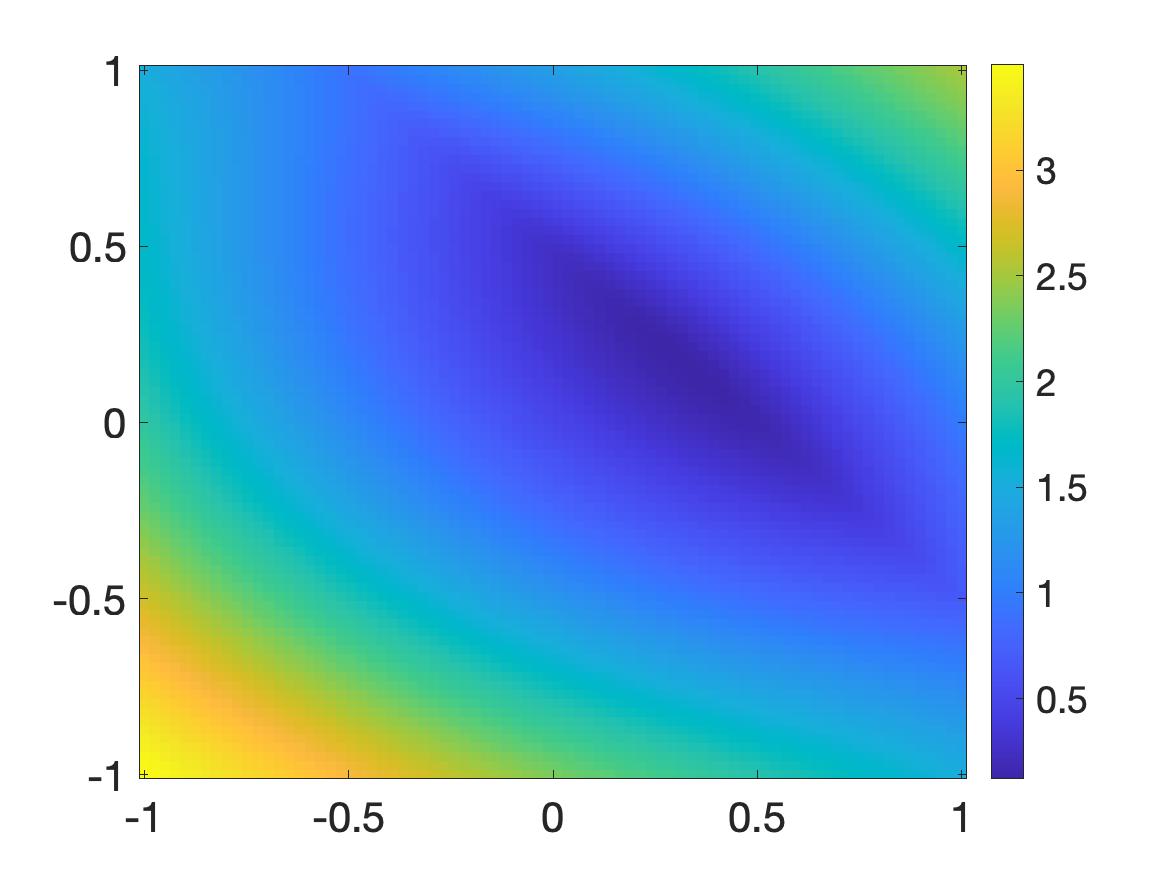}}

	\subfloat[The relative error $\|u_{n} - u_{n - 1}\|_{L^2(\Omega)}$. The horizontal axis is the number of iteration $n$.]{\includegraphics[width=.3\textwidth]{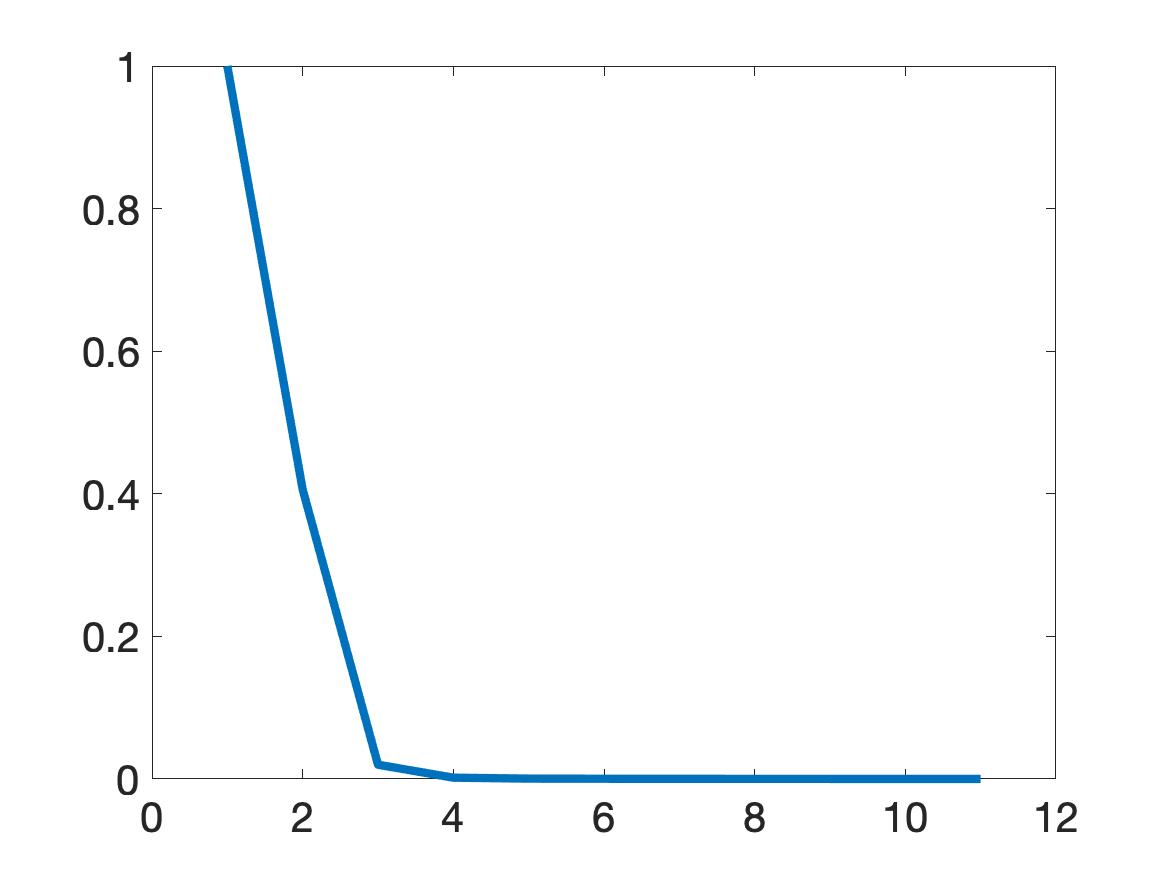}}
	\quad
	\subfloat[The true and computed solutions on the line $\{(x = 0 , y) \in \Omega\}$]{\includegraphics[width=.3\textwidth]{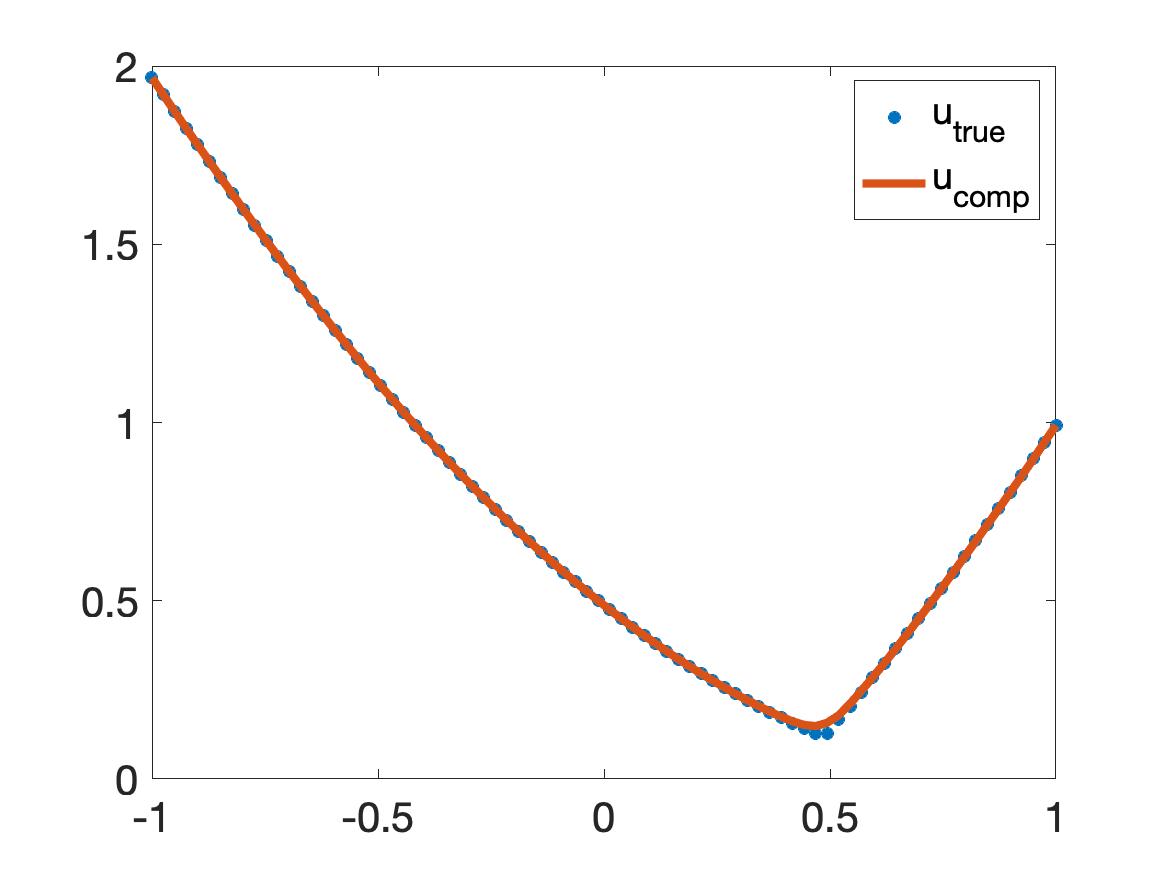}}
	\quad
	\subfloat[ The relative error $\frac{|u^*(\x) - u_{\rm comp}(\x)|}{\|u_{\rm true}\|_{L^{\infty}(\Omega)}}.$]{\includegraphics[width = .3\textwidth]{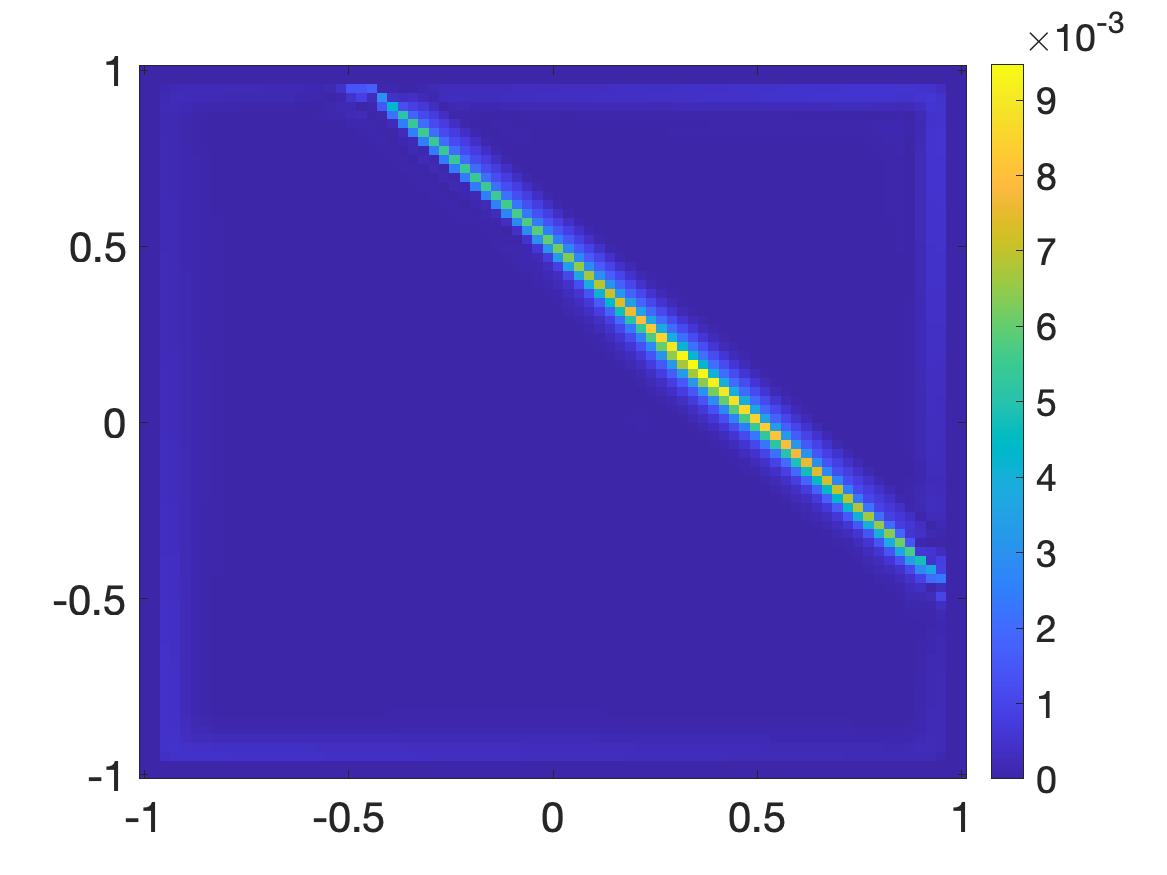}}
	\caption{\label{fig 4} Test 4.  The true viscosity solution to \eqref{HJ}-\eqref{bdry} and the computed one. The Hamiltonian and the boundary conditions are given in \eqref{F4}-\eqref{boundary42}.}
\end{figure}
The relative error $\frac{\|u^* - u_{\rm comp}\|_{L^\infty(\Omega)}}{\|u^*\|_{L^\infty(\Omega)}} = 0.95\%.$
 
 \medskip

{\it Test 5.} We solve a G-equation. 
The Hamiltonian in this test is given by
\begin{equation}
	F(\x, s, \p) = 5 s + |\p| - x p_1 
	+
	\Big[
		5(|x - 0.5| + |y|) -x \mbox{sign}(x - 0.5) - \sqrt{2} 
	\Big]
	\label{F5}
\end{equation}
for all $\x = (x, y) \in \Omega, s \in \R, \p = (p_1, p_2) \in \R^2$.
The boundary conditions are given by
\begin{equation}
	u(\x) = -|x - 0.5| - |y|
	\quad \mbox{for all } \x = (x, y) \in \partial \Omega
	\label{boundary51}
\end{equation}
and
\begin{equation}
	 \partial_\nu u(\x) = -\big(\mbox{sign}(x - 0.5), \mbox{sign}(y) 
	 \big) \cdot \nu
	 \label{boundary52}
\end{equation}
 for all $\x = (x, y) \in \partial \Omega$.
 The true solution is $u^*(\x) =- |x - 0.5| - |y|$.
The graphs of $u^*$ and $u_{\rm comp}$ are displayed in Figure \ref{fig 3}.
 \begin{figure}[h!]
	\subfloat[The true solution $u^*$ to the Hamilton-Jacobi equation where the Hamiltonian is given in \eqref{F5}.]{\includegraphics[width=.3\textwidth]{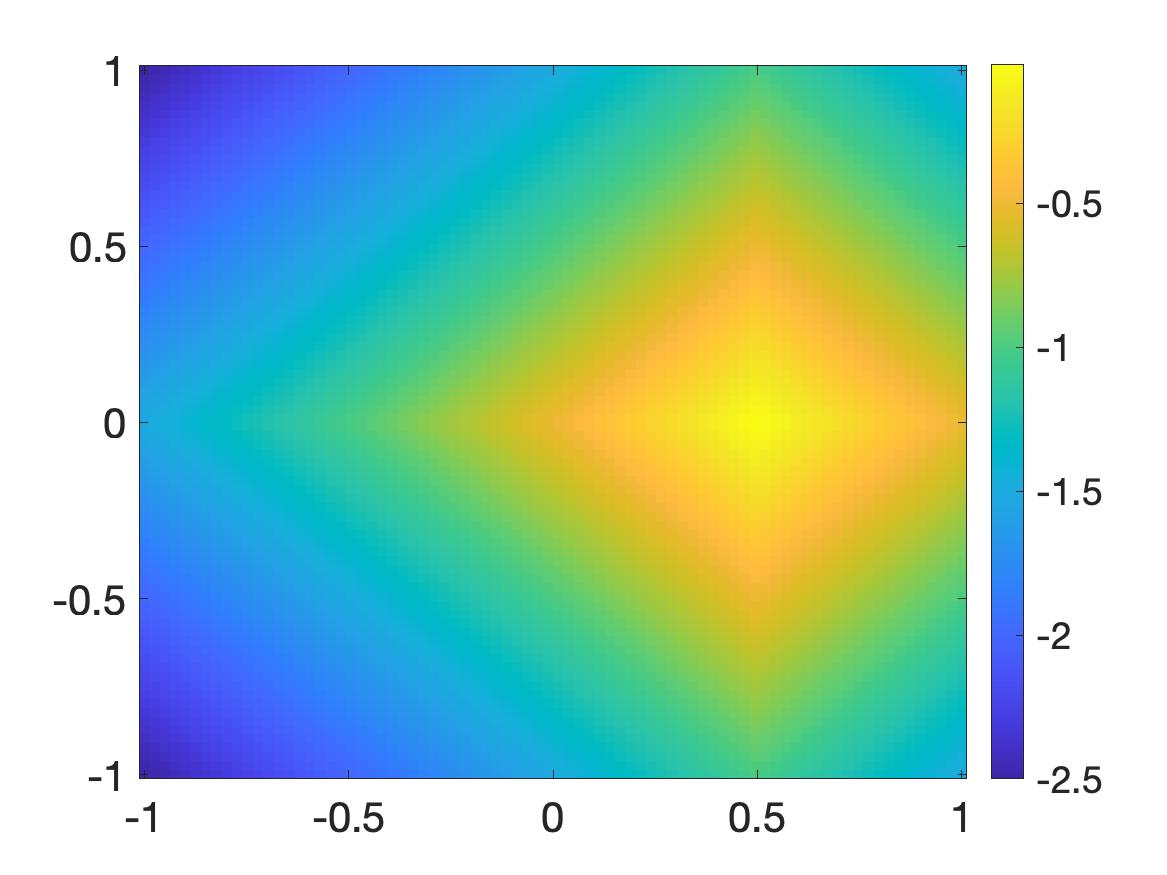}}
	\quad
	\subfloat[The initial solution $u_0$ computed by minimizing $J_0^{\lambda, \beta, \eta}$ defined in \eqref{J0}]{\includegraphics[width=.3\textwidth]{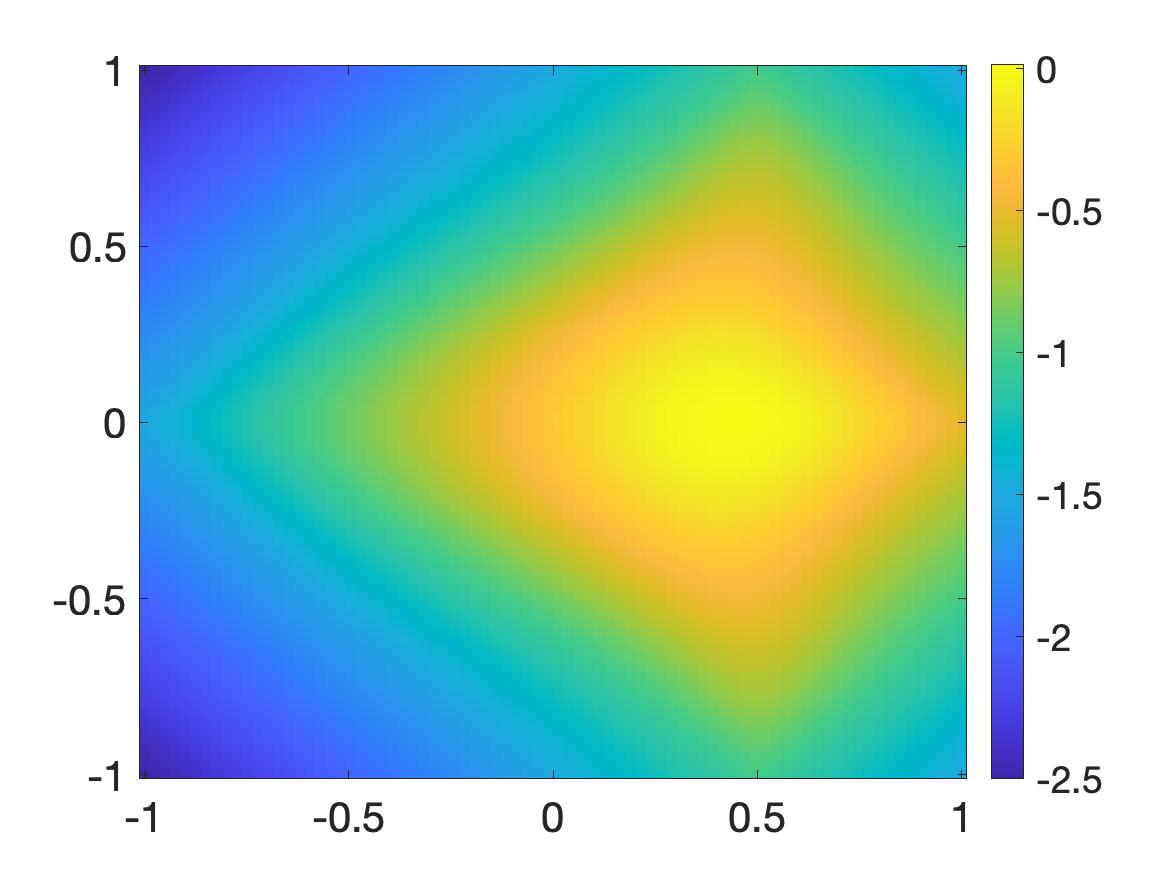}}
	\quad
	\subfloat[The computed solution $u_{\rm comp}$.]{\includegraphics[width=.3\textwidth]{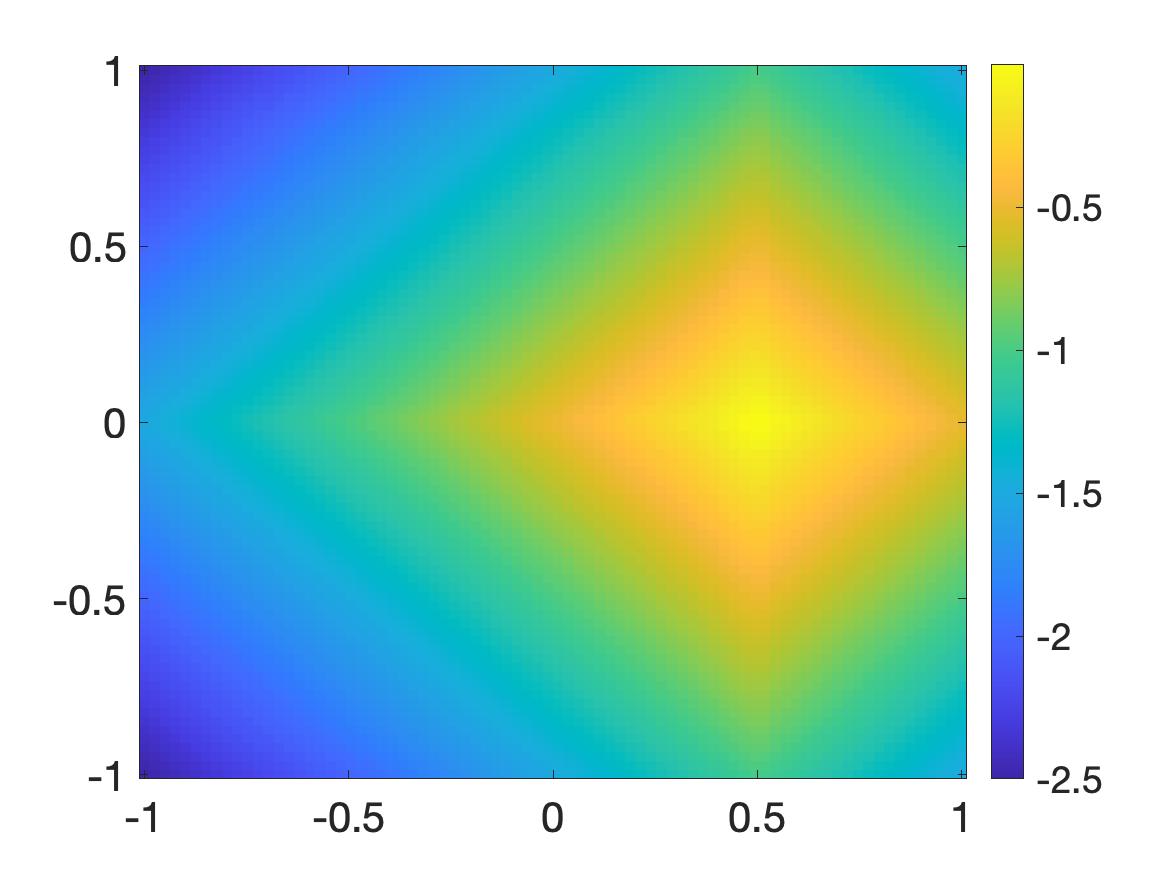}}

	\subfloat[The relative error $\|u_{n} - u_{n - 1}\|_{L^2(\Omega)}$. The horizontal axis is the number of iteration $n$.]{\includegraphics[width=.3\textwidth]{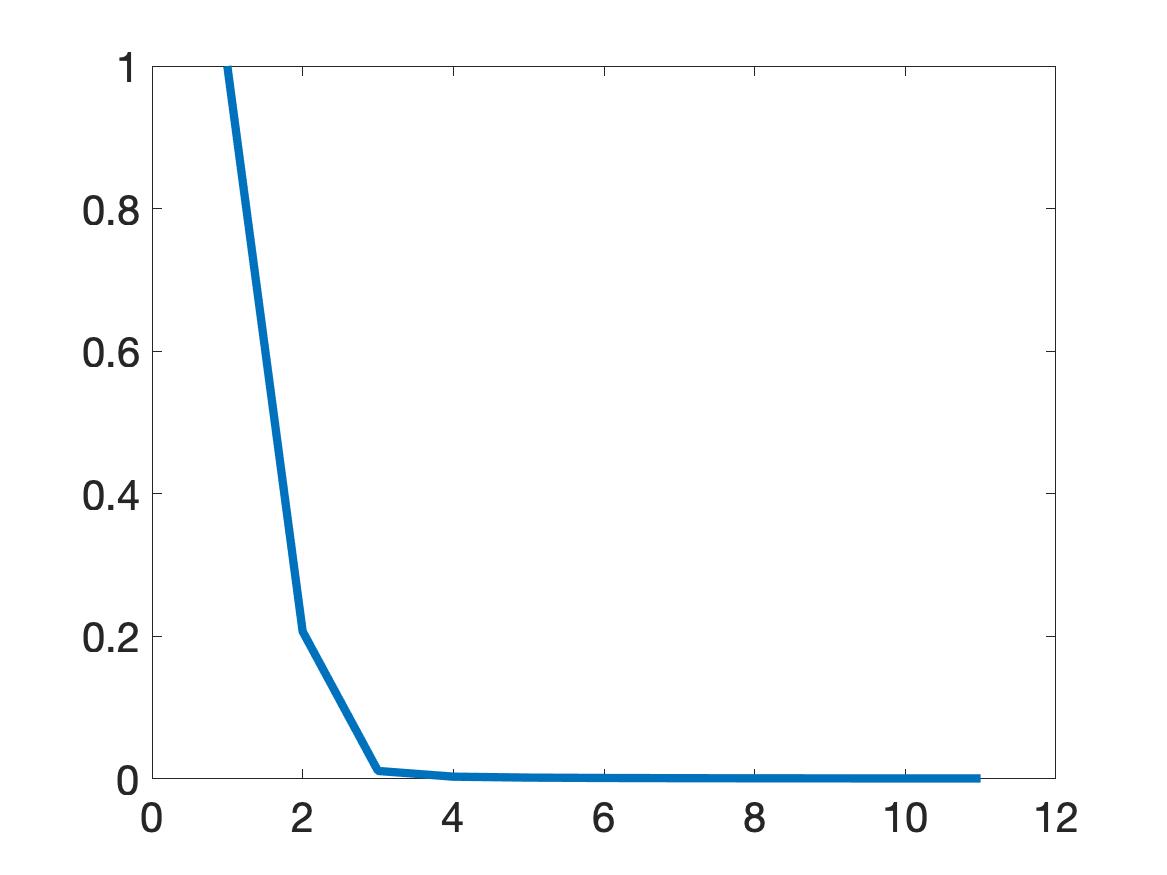}}
	\quad
	\subfloat[The true and computed solutions on the line $\{(x = 0 , y) \in \Omega\}$]{\includegraphics[width=.3\textwidth]{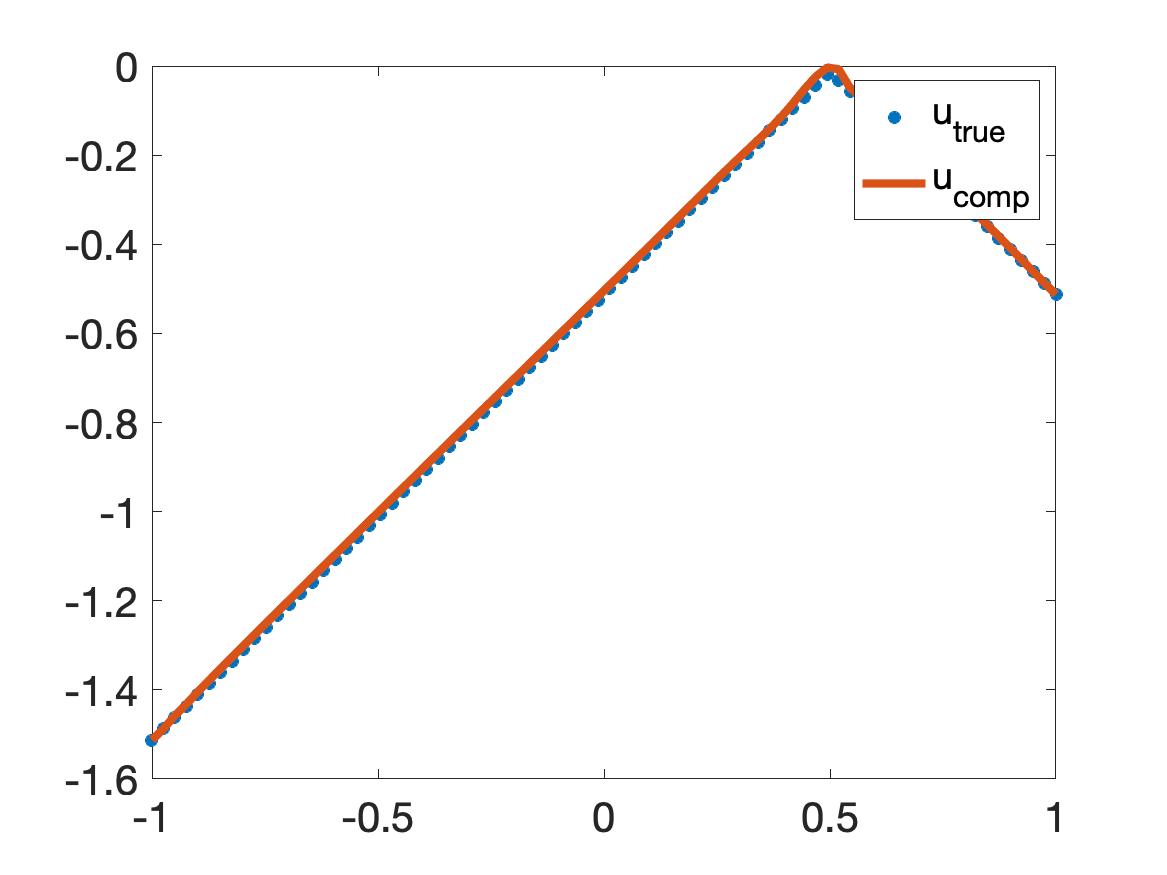}}
	\quad
	\subfloat[ The relative error $\frac{|u^*(\x) - u_{\rm comp}(\x)|}{\|u_{\rm true}\|_{L^{\infty}(\Omega)}}.$]{\includegraphics[width = .3\textwidth]{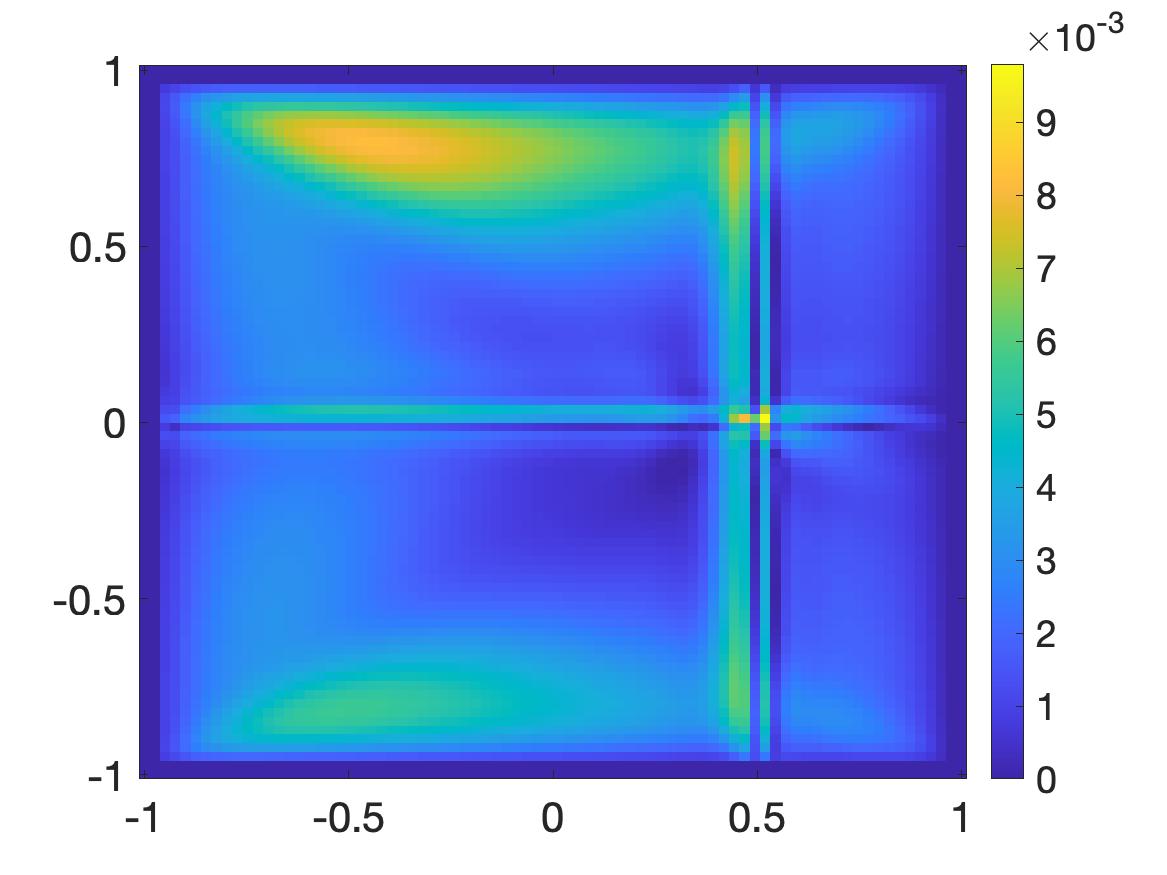}}
	\caption{\label{fig 5} Test 5. The true viscosity solution to \eqref{HJ}-\eqref{bdry} and the computed one. The Hamiltonian and the boundary conditions are given in \eqref{F5}-\eqref{boundary52}.}
\end{figure}
The relative error $\frac{\|u^* - u_{\rm comp}\|_{L^\infty(\Omega)}}{\|u^*\|_{L^\infty(\Omega)}} = 0.98\%.$

  \medskip

{\it Test 6.}  The Hamiltonian in this test is given by
\begin{multline}
	F(\x, s, \p) = 20 s + \min\{|\p|, ||\p| - 10| + 6\} 
	-
	\Big[
		20 (-|x| + \sin(\pi(x^2 + y^2))) 
		\\
		+ \min\Big\{
			h(x, y), |h(x, y) - 10| + 6
		\Big\}
	\Big]
	\label{F6}
\end{multline}
for all $\x = (x, y) \in \Omega, s \in \R, \p = (p_1, p_2) \in \R^2$
where
\[
	h(x, y) = \sqrt{[-\mbox{sign}(x) + 2\pi \cos(\pi(x^2 + y^2))]^2 + [2\pi \cos(\pi(x^2 + y^2)]^2}.
\]
The boundary conditions are given by
\begin{equation}
	u(\x) = -|x| + \sin(\pi(x^2 + y^2))
	\quad \mbox{for all } \x = (x, y) \in \partial \Omega
		\label{boundary61}
\end{equation}
and
\begin{equation}
	 \partial_\nu u(\x) = \big(
	 	-\mbox{sign}(x) + 2\pi x \cos(\pi(x^2 + y^2)), 2\pi y \cos(\pi(x^2 + y^2))
	 \big) \cdot \nu
	 \label{boundary62}
\end{equation}
 for all $\x = (x, y) \in \partial \Omega$.
 The true solution is $u^*(\x) =  -|x| + \sin(\pi(x^2 + y^2))$.
The graphs of $u^*$ and $u_{\rm comp}$ are displayed in Figure \ref{fig 6}.
 \begin{figure}[h!]
	\subfloat[The true solution $u^*$ to the Hamilton-Jacobi equation where the Hamiltonian is given in \eqref{F6}.]{\includegraphics[width=.3\textwidth]{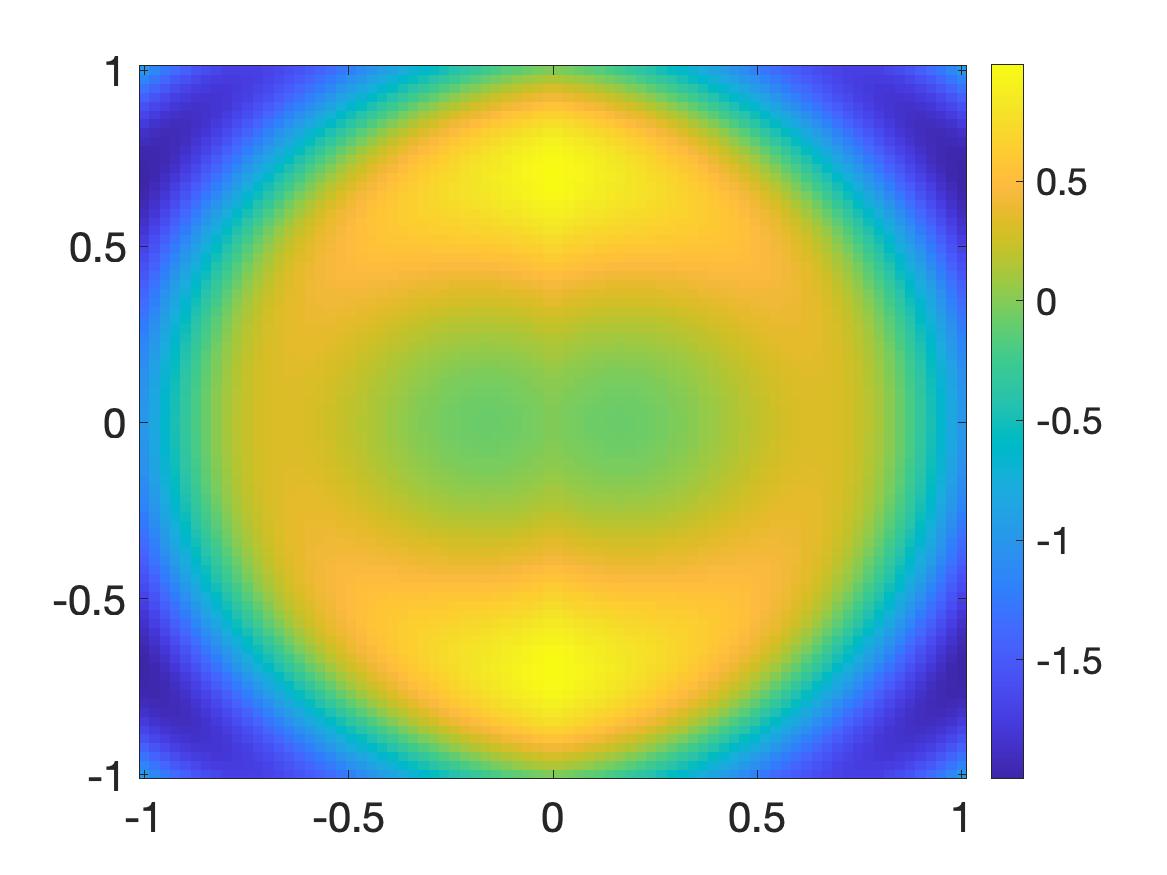}}
	\quad
	\subfloat[The initial solution $u_0$ computed by minimizing $J_0^{\lambda, \beta, \eta}$ defined in \eqref{J0}]{\includegraphics[width=.3\textwidth]{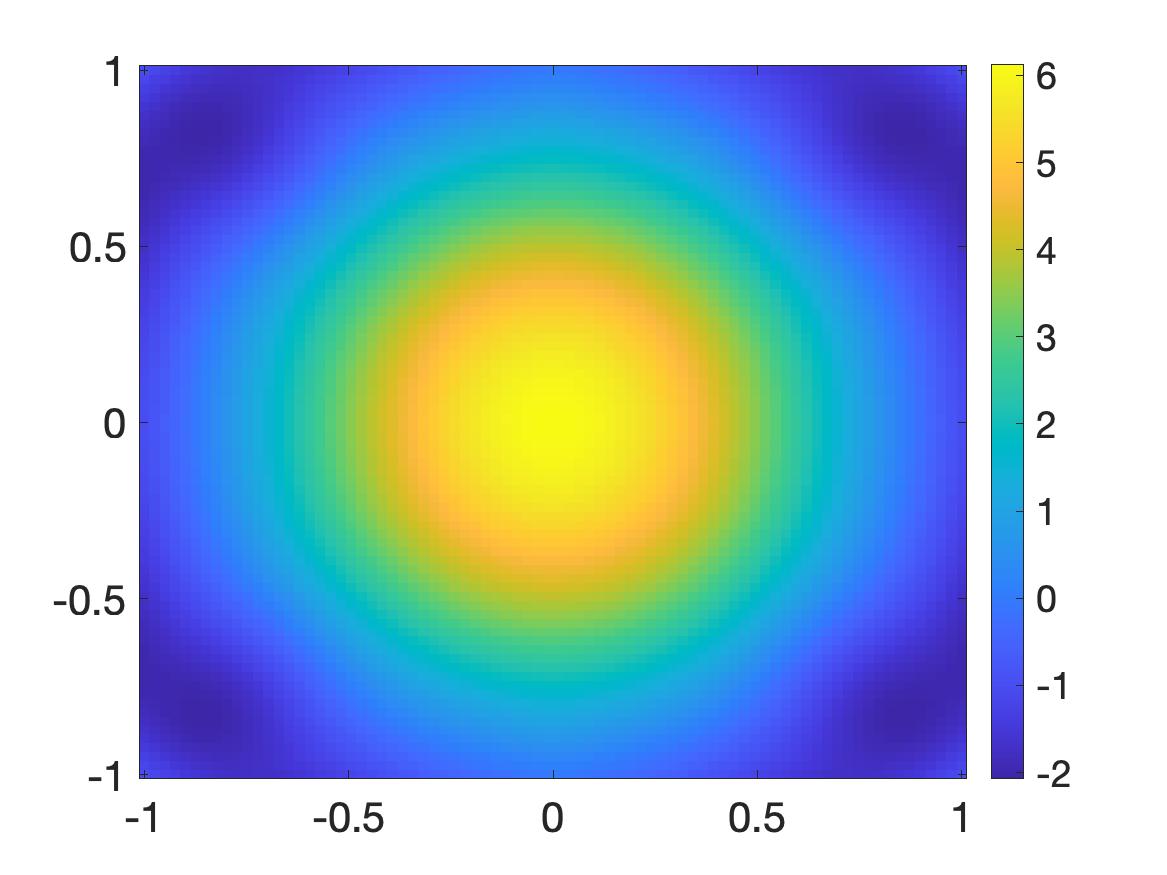}}
	\quad
	\subfloat[The computed solution $u_{\rm comp}$.]{\includegraphics[width=.3\textwidth]{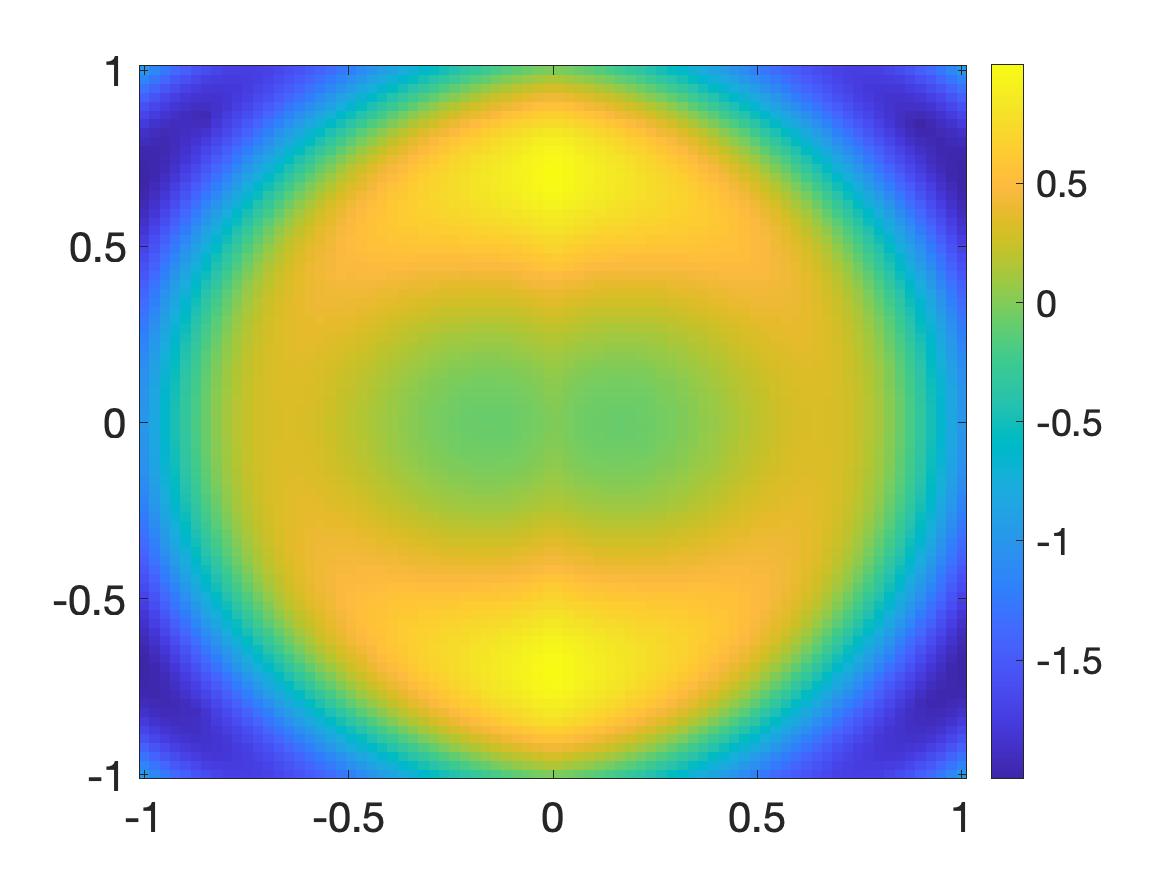}}

	\subfloat[The relative error $\|u_{n} - u_{n - 1}\|_{L^2(\Omega)}$. The horizontal axis is the number of iteration $n$.]{\includegraphics[width=.3\textwidth]{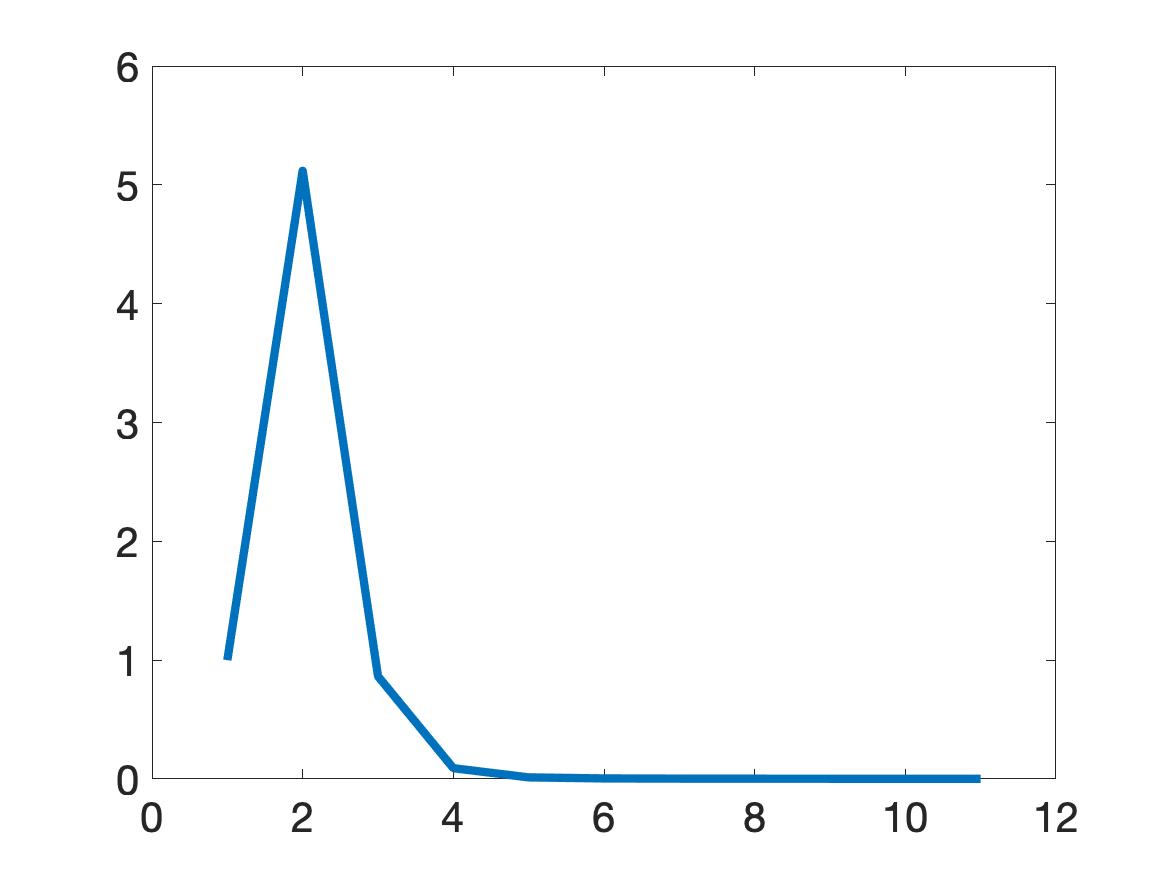}}
	\quad
	\subfloat[The true and computed solutions on the line $\{(x = 0 , y) \in \Omega\}$]{\includegraphics[width=.3\textwidth]{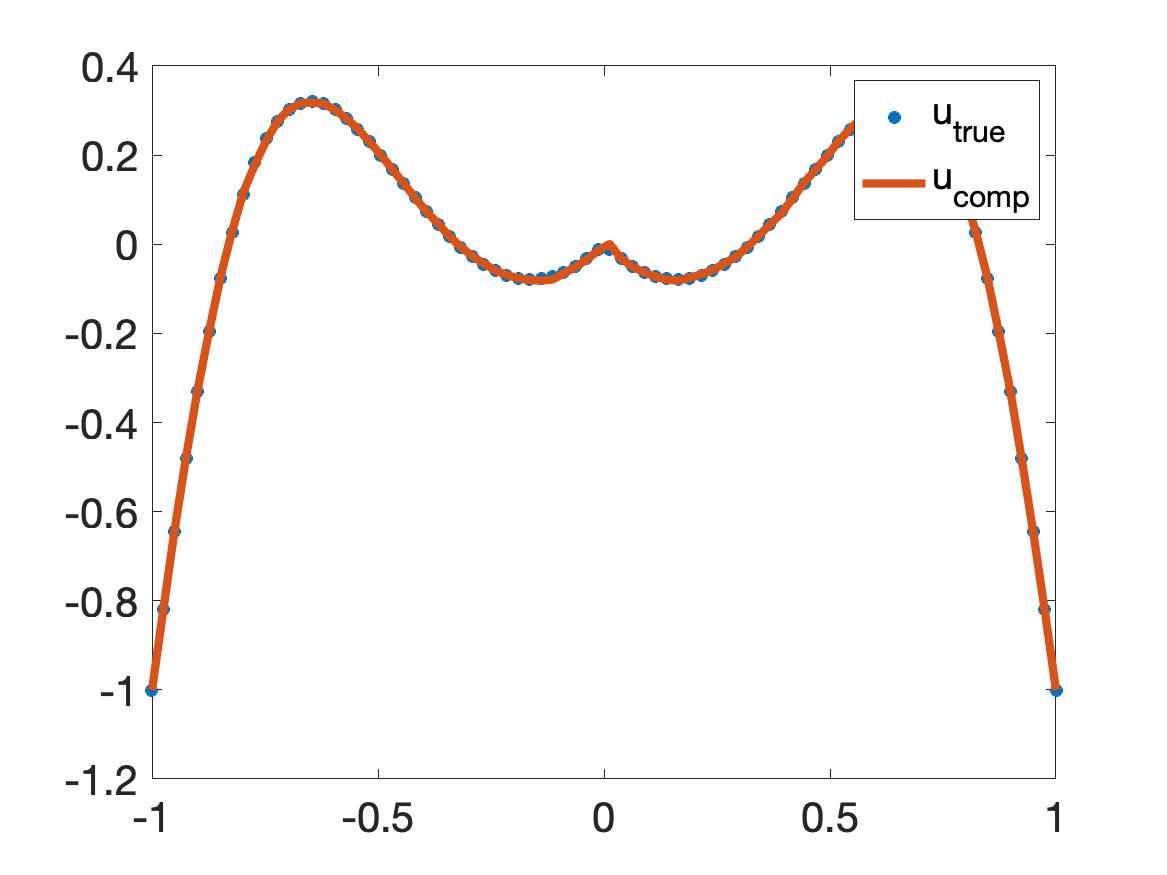}}
	\quad
	\subfloat[ The relative error $\frac{|u^*(\x) - u_{\rm comp}(\x)|}{\|u_{\rm true}\|_{L^{\infty}(\Omega)}}.$]{\includegraphics[width = .3\textwidth]{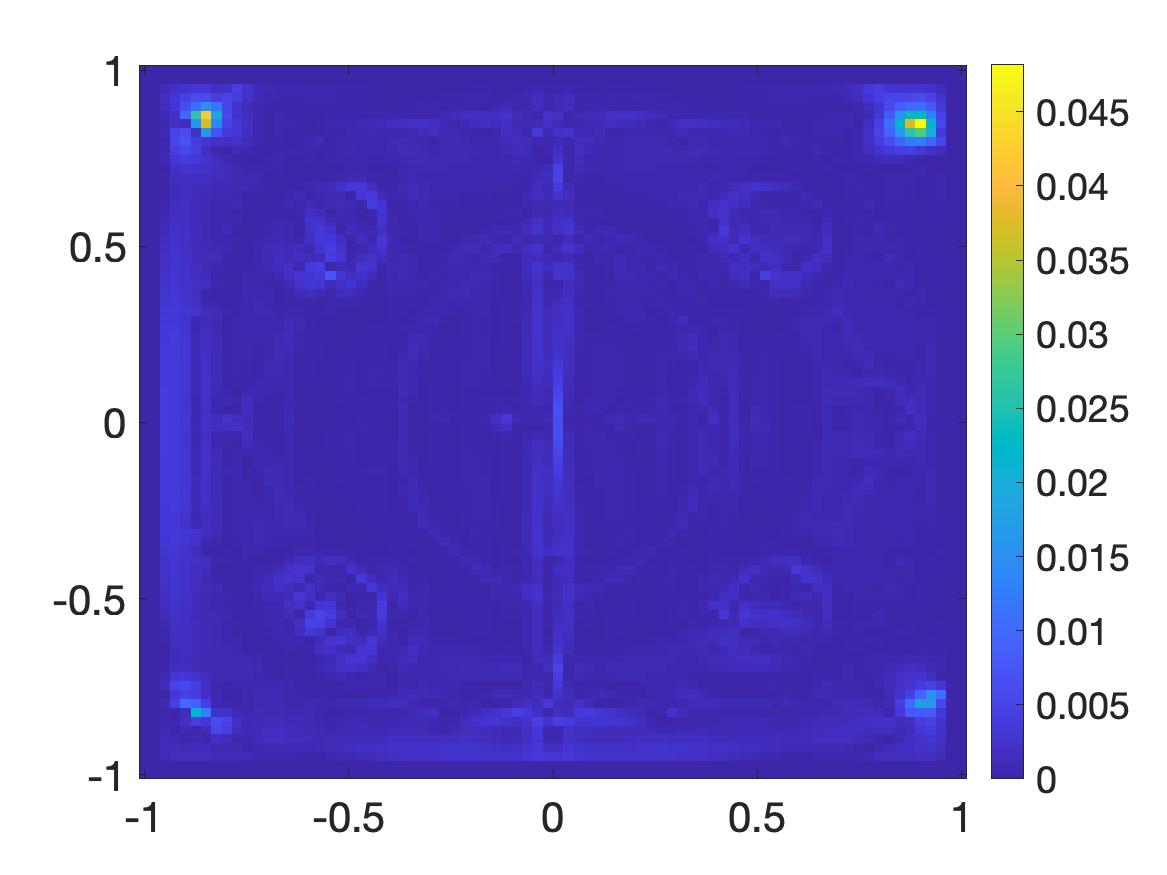}}
	\caption{\label{fig 6} Test 6. The true viscosity solution to \eqref{HJ}-\eqref{bdry} and the computed one. The Hamiltonian and the boundary conditions are given in \eqref{F6}-\eqref{boundary62}.}
\end{figure}
The relative error $\frac{\|u^* - u_{\rm comp}\|_{L^\infty(\Omega)}}{\|u^*\|_{L^\infty(\Omega)}} = 4.8\%.$

 \begin{remark}
 	The $L^\infty$ relative errors in all tests above for first-order Hamilton-Jacobi equations are compatible with $\max\{O(\sqrt{\eta}),O(\sqrt{\epsilon_0})\} \simeq 3\%.$
 \end{remark}

\section{Concluding remarks}  \label{sec 5}
 
We have developed a new globally convergent numerical method to solve over-determined boundary value problems of quasilinear elliptic equations. 
The key point of the method is to repeatedly solve the linearization of the given PDE by the Carleman weighted quasi-reversibility method (Algorithm \ref{alg 1}).  
As the result, we obtain a sequence of functions converging to the solution thanks to the Carleman estimate (Lemma \ref{lem:Carleman}).
The strength of our new method includes (1) the global convergence property and (2) the fast convergence rate, which is described in Theorem \ref{thm:main}.
Some numerical results for quasilinear elliptic equations and first-order Hamilton-Jacobi equations are presented to show the effectiveness of our method.

\section*{Acknowledgement} The works of TTL and LHN were supported by US Army Research Laboratory and US Army Research
Office grant W911NF-19-1-0044 and , in part, by funds provided by the Faculty Research Grant program at UNC Charlotte, Fund No. 111272. 
The work of HT is supported in part by  NSF CAREER grant DMS-1843320 and a Simons fellowship.

\bibliographystyle{plain}
\bibliography{../../../../mybib}

\end{document}